\DeclareFontFamily{U}{mathx}{\hyphenchar\font45}
\DeclareFontShape{U}{mathx}{m}{n}{ <5> <6> <7> <8> <9> <10>
   <10.95> <12> <14.4> <17.28> <20.74> <24.88> mathx10 }{}
\DeclareSymbolFont{mathx}{U}{mathx}{m}{n}
\DeclareMathAccent{\widecheck}{0}{mathx}{"71}
\newtheorem{thm}{Theorem}[section]
\newtheorem{prop}[thm]{Proposition}
\newtheorem{lem}[thm]{Lemma}
\theoremstyle{definition}
\newtheorem{defi}[thm]{Definition}
\newtheorem{rem}[thm]{Remark}
\newtheorem{ex}[thm]{Example}
\newtheorem{setting}[thm]{Setting}
\DeclareMathOperator{\supp}{supp}
\DeclareMathOperator{\dens}{dens}
\DeclareMathOperator{\freq}{freq}
\DeclareMathOperator{\Tr}{Tr}
\newcommand{\card}{\#}
\newcommand{\Zpo}{\mathbb{Z}_{>0}}
\newcommand{\calT}{\mathcal{T}}
\newcommand{\calP}{\mathcal{P}}
\newcommand{\calQ}{\mathcal{Q}}
\newcommand{\e}{\varepsilon}
\newcommand{\im}{\mathrm{i}}
\newcommand{\R}{\mathbb{R}}
\newcommand{\C}{\mathbb{C}}
\newcommand{\T}{\mathbb{T}}
\def\maprestriction#1#2{\left. #1 \right|_{#2}}
\title[Absence of absolutely continuous diffraction for S-adic tilings]{Absence of absolutely continuous diffraction spectrum for certain S-adic tilings}
\author{Yasushi Nagai}
\address{School of General Education,  Shinshu University, 3-1-1, Asahi, Matsumoto, Nagano, 
390-8621, Japan}
\email{ynagai@shinshu-u.ac.jp}
\date{\today}							
\subjclass[2010]{52C23,52C22}
\keywords{diffraction, S-adic tilings}
\begin{document}
\maketitle

\begin{abstract}
      \textcolor{red}{Quasiperiodic tilings} are often considered as structure models of quasicrystals.
      In this context, it is important to study the nature of the diffraction measures for tilings.
      In this article, we investigate the diffraction measures for S-adic tilings
     \textcolor{red}{in $\R^d$}, which are
      constructed from a family of geometric substitution rules.
      In particular, we firstly give a sufficient condition for the absolutely continuous component
      of the diffraction measure for an S-adic tiling to be zero.
      Next, we prove this sufficient condition for ``almost all'' binary \textcolor{red}{block-substitution} cases
      and thus prove the absence of the absolutely continuous diffraction
      spectrum for most of S-adic tilings from a family of binary \textcolor{red}{block} substitutions.
\end{abstract}

\section{Introduction}

A tiling is a cover of $\R^{d}$ by its countably many subsets (tiles)
$T$ with the property that $T=\overline{T^{\circ}}$ (i.e., each tile is
the closure of its interior).  There exist tilings $\mathcal{T}$ that
are non-periodic (meaning that $\mathcal{T}+x=\mathcal{T}$ holds for
$x=0$ only) but still admit repetitions of patterns: for example,
$\mathcal{T}$ may be repetitive \cite[Definition
  5.8]{Baake-Grimm_vol1}, or almost periodic in a sense, such as in
\cite[Chapter 5]{Baake-Grimm-vol2} and in \cite{G}, to name a few.
For this reason, such tilings are often considered as structure models
of quasicrystals.  The diffraction measures defined for these tilings
then correspond to physical diffraction patterns.  In this context, it
is important to study the nature of diffraction measures for tilings.
Especially, it is interesting to know when a diffraction measures is
pure point (a sum of point or Dirac measures).

There are several ways to construct interesting non-periodic
tilings. One of the most common approaches is via substitution (or
inflation) rules. (There are ``symbolic'' substitution rules and
``geometric'' ones, the spectrum of which are related
\cite{Clark-Sadun}, but in this article we only deal with
``geometric'' ones.)  Given a substitution rule $\rho$ \textcolor{red}{in $\R^d$}, it
gives rise to \textcolor{red}{self-affine} tilings, which are often repetitive and
almost-periodic. The class of self-affine tilings is included in the
class of S-adic tilings, which are tilings that are generated by a finite
family of substitution rules.

Concerning the spectral properties of self-affine tilings, a key
conjecture is the Pisot substitution conjecture, which states that
self-affine tilings obtained from substitution rules of Pisot type
are pure point diffractive, that is, their diffraction measures are
pure point.  This is still an open problem, but there are several
partial positive answers.  Here, we just mention that the binary one-dimensional case,
in which there are only two tiles up to translation, is solved
\cite{Sirvent-Solomyak}. The definition of Pisot type for substitution
includes irreducibility, but for some reducible cases, in the setting
where the substitution is binary \textcolor{red}{block-substitution}, Ma\~{n}ibo
\cite{Manibo_binary, Manibo_thesis} and \textcolor{red}{Baake-Grimm \cite{Baake-Grimm_renormalisation}}
proved the absence of absolutely continuous
components in the diffraction pattern.

In this article, we study the diffraction spectrum for
S-adic tilings in $\R^d$, which generalizes the single substitution (self-affine) 
case \cite{Baake-Grimm_renormalisation, Manibo_binary, Manibo_thesis}.
 In particular, we generalize the method from
\cite{BGM,Manibo_thesis} to prove that (I) an inequality for Fourier
matrices is sufficient for the absence of an absolutely continuous
component in the diffraction measure, for quite a general class of
S-adic tilings (including, but not only, the binary case), and (II)
the sufficient condition in (I) is satisfied for ``almost all'' binary
\textcolor{red}{block-substitution} cases, and so, for such an S-adic tiling, the
absolutely continuous part of the diffraction measure is zero.  
The
precise statement for claim (I) is found in
Theorem~\ref{thm_liminf_implies_zero_acpart}, in the setting specified
in Setting~\ref{setting_relation_exponents_absconti-spec}. 
 The special case for claim (II) is elaborated on below, and the
precise statement for claim (II) is
Theorem~\ref{thm_absence_for_binary}, where the setting for this
result is detailed in Setting~\ref{setting_binary_const_length}.
The key ingredients are renormalization technique developed by
\cite{BFGR, Baake-Gahler, Baake-Grimm_renormalisation, 
 Baake-Grimm-Manibo, Manibo_binary, Manibo_thesis}
 and Furstenberg--Kesten and Oseledets theorems.

\begin{figure}[h]
\begin{center}
\subfigure[an example of block substitution]{
\includegraphics[width=6cm]{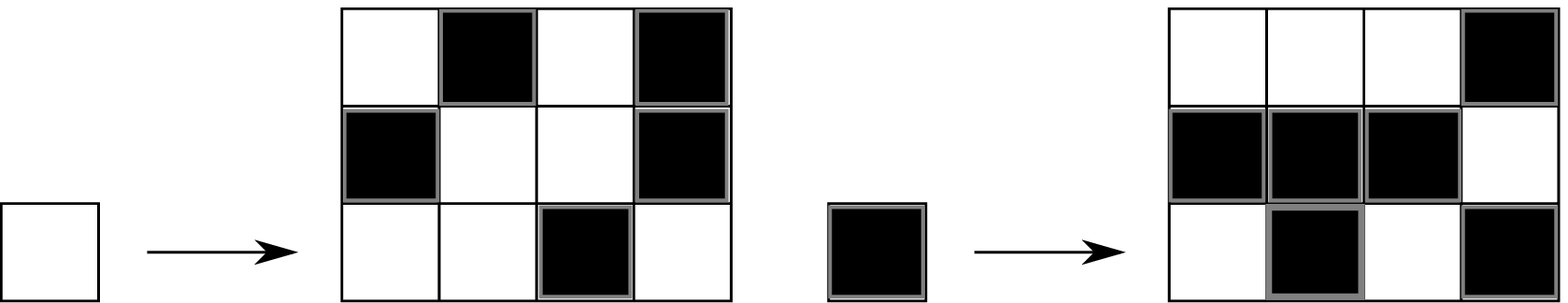}
\label{fig_block_substi1}
}
\hspace{1cm}
\subfigure[another example of block substitution]{
\includegraphics[width=6cm]{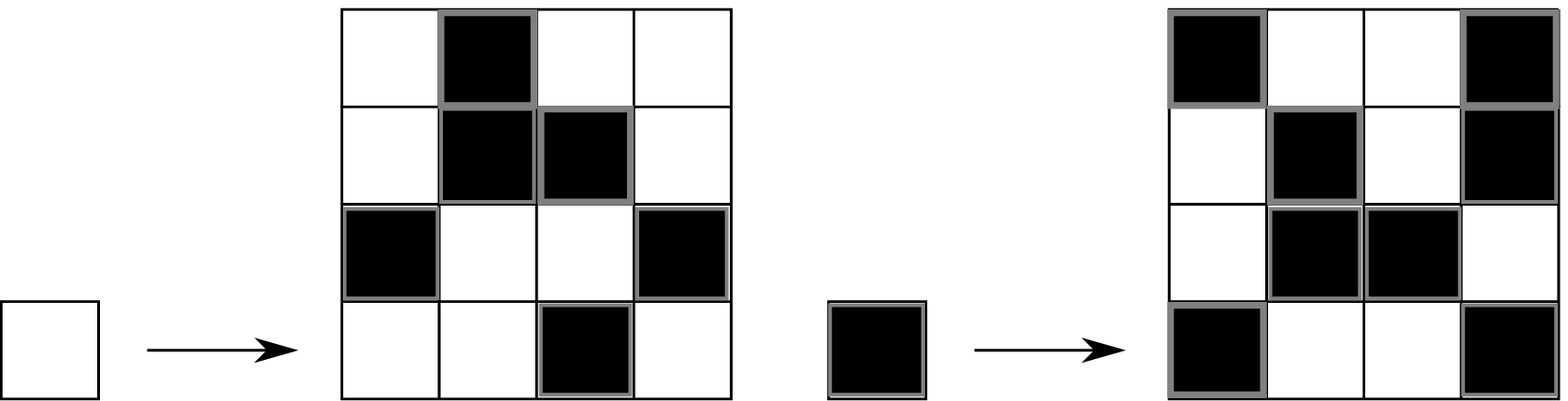}
\label{fig_block_substi2}
}
\caption{Examples of block substitutions}
\label{Fig1}
\end{center}
\end{figure}

\textcolor{red}{To elaborate on the claim (II), let us consider two substitutions, $\rho_1$ in Figure \ref{fig_block_substi1}
and $\rho_2$ in Figure \ref{fig_block_substi2}. 
Such substitutions (one with prototiles with support $[0,1]^d$) are called block substitutions.
For arbitrary sequence $i_1,i_2,\ldots$ in $\{1,2\}^{\mathbb{N}}$,
by choosing an appropriate increasing sequence $n_1<n_2<\cdots$ and appropriate patches $\mathcal{P}_k$, $k=1,2,\ldots$, 
we have a convergence}
\begin{align*}
        \mathcal{T}=\lim_{k\rightarrow\infty}\rho_{i_1}\circ\rho_{i_2}\circ\cdots\circ\rho_{i_{n_k}}(\mathcal{P}_k)
\end{align*}
\textcolor{red}{and $\mathcal{T}$ is a tiling. (For details, see page \pageref{eq_def_S-adic-tiling}.)
Such $\mathcal{T}$ is called an S-adic tiling belonging to the sequence $(i_n)_n$
for $\rho_1,\rho_2$.
The special case for the main result of this paper (Theorem \ref{thm_absence_for_binary}) is as follows.}

\begin{thm}[A special case of Theorem \ref{thm_absence_for_binary}]\label{thm_special_case1}
       \textcolor{red}{Let $p_1,p_2$ be two positive real numbers with $p_1+p_2=1$. Endow $\{1,2\}^{\mathbb{N}}$ the product probability measure $\mu$
       for the probability measure on $\{1,2\}$ defined by $(p_1,p_2)$. Then, for $\mu$-almost all $(i_n)_n\in\{1,2\}^{\mathbb{N}}$,
       the S-adic tilings belonging to $(i_n)_n$ for $\rho_1,\rho_2$ have zero absolutely continuous diffraction spectrum,
       that is, the absolutely continuous part of the diffraction measure is zero.}
\end{thm}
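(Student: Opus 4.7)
The plan is to deduce the theorem from Theorem~\ref{thm_liminf_implies_zero_acpart}, which provides a Fourier-matrix criterion for the absence of the absolutely continuous diffraction component; it then remains to verify the hypothesis of that theorem for $\mu$-almost every sequence $(i_n)_n$. For this I intend to exploit the Fourier-matrix renormalization developed in \cite{Baake-Grimm_renormalisation, Manibo_binary, Manibo_thesis} together with the Furstenberg--Kesten and Oseledets theorems applied to the resulting random cocycle.

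First, I would set up the cocycle. To each block substitution $\rho_i$ one attaches a matrix-valued trigonometric polynomial $B_{i}(k)$ on $\R^d$ whose entries encode the relative positions of the descendants of each prototile under $\rho_i$. For the random sequence $\omega=(i_n)_n\in\{1,2\}^{\mathbb{N}}$ and a wave vector $k$, the renormalization-relevant product at level $n$ is
\begin{equation*}
	M_n(k,\omega)=B_{i_1}(k)\,B_{i_2}(\Lambda_{i_1}^{T}k)\cdots B_{i_n}\bigl((\Lambda_{i_1}\cdots\Lambda_{i_{n-1}})^{T}k\bigr),
\end{equation*}
where $\Lambda_i$ is the expansion of $\rho_i$. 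This is a multiplicative cocycle over the skew product of the shift on $(\{1,2\}^{\mathbb{N}},\mu)$ with the associated random dilation on a fundamental torus coming from the prototile lattice; since the inflations are integer block inflations, this dilation preserves Lebesgue measure. The condition of Theorem~\ref{thm_liminf_implies_zero_acpart} translates, after this setup, into the strict inequality
\begin{equation*}
	\liminf_{n\to\infty}\frac{1}{n}\log\|M_n(k,\omega)\|\,>\,\lim_{n\to\infty}\frac{1}{2n}\log|\det(\Lambda_{i_1}\cdots\Lambda_{i_n})|,
\end{equation*}
asked to hold on a set of $k$ of full Lebesgue measure in a fundamental domain.

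Next, I would apply Furstenberg--Kesten to $M_n$. Ergodicity of $\mu$ under the shift and the standard ergodic properties of the random Bernoulli dilation on the torus combine to yield a deterministic top Lyapunov exponent
\begin{equation*}
	\lambda_{\mathrm{top}}=\lim_{n\to\infty}\frac{1}{n}\log\|M_n(k,\omega)\|
\end{equation*}
for almost every $(\omega,k)$, while Oseledets supplies the full Lyapunov spectrum and equivariant splittings should the finer structure be needed. The right-hand side of the inequality above is, by the strong law of large numbers applied to $\log|\det\Lambda_{i_n}|$, almost surely equal to $\tfrac{1}{2}\bigl(p_1\log|\det\Lambda_1|+p_2\log|\det\Lambda_2|\bigr)$.

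The step I expect to be the main obstacle is proving the strict inequality
\begin{equation*}
	\lambda_{\mathrm{top}}\,>\,\tfrac{1}{2}\bigl(p_1\log|\det\Lambda_1|+p_2\log|\det\Lambda_2|\bigr).
\end{equation*}
For binary block substitutions the entries of $B_i(k)$ are explicit trigonometric polynomials with controllable algebraic structure, and the plan is to derive a quantitative lower bound on $\|M_n\|$ at $k=0$ from the Perron--Frobenius data of the substitution matrices, transport it to a positive-measure set of $k$ by continuity and subadditivity, and then combine it with the ``almost all'' genericity hypothesis recorded in Setting~\ref{setting_binary_const_length}, which is precisely designed to exclude pathological cancellations among the $B_i$. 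With this strict inequality in hand, Theorem~\ref{thm_liminf_implies_zero_acpart} immediately yields the vanishing of the absolutely continuous diffraction for the S-adic tilings belonging to $\mu$-a.e.\ $(i_n)_n$, which is precisely the assertion of the theorem.
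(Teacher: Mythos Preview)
Your setup is broadly right---skew product on the torus, Furstenberg--Kesten and Oseledets---but the crucial inequality is reversed. Theorem~\ref{thm_liminf_implies_zero_acpart} requires
\[
\liminf_{n\to\infty}\Bigl(\tfrac{1}{2n}\log|\det(\Lambda_{i_1}\cdots\Lambda_{i_n})|-\tfrac{1}{n}\log\|M_n(k,\omega)\|\Bigr)>\e,
\]
i.e.\ you need an \emph{upper} bound on the top Lyapunov exponent of the Fourier cocycle, strictly below $\tfrac{1}{2}\sum_i p_i\log|\det\Lambda_i|$. Your stated target inequality has the two sides swapped, and your proposed strategy---a lower bound on $\|M_n\|$ at $k=0$ via Perron--Frobenius---is tailored to the wrong direction. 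Indeed at $k=0$ the Fourier matrix reduces to the substitution matrix, whose Perron eigenvalue for a block substitution equals $|\det\Lambda_i|$, so there $\tfrac{1}{n}\log\|M_n(0,\omega)\|\to\sum_ip_i\log|\det\Lambda_i|$, which is \emph{twice} the threshold. This value is exceptional; it does not propagate to a positive-measure set of $k$ by continuity (Lyapunov exponents need not be continuous, and here the a.e.\ value is genuinely smaller).

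What the paper does, and what your plan is missing, is an algebraic input specific to the binary case: for every $i$ and every $z\in\T^d$ the vector $(1,-1)^\top$ is a common eigenvector of $C^{(i)}(z)$ with eigenvalue $q^{(i)}_{1,2}(z)-q^{(i)}_{2,1}(z)$ (Lemma~\ref{lem_eigenvector_for_C}). This makes one Lyapunov exponent explicitly computable by Birkhoff's theorem as $-\sum_i p_i\,\mathfrak{m}(q^{(i)}_{1,2}-q^{(i)}_{2,1})$; combined with the determinant identity and the vanishing Mahler measure of $\sum_{f\in F_i}z^f$ (Jensen), this forces $\chi_+=0$ for the inverse cocycle, hence $\tfrac{1}{n}\log\|M_n\|\to\sum_i p_i\,\mathfrak{m}(q^{(i)}_{1,2}-q^{(i)}_{2,1})$ a.e.\ (Lemma~\ref{lem_whatis_lambda+-}). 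The strict gap then comes from the elementary Mahler-measure estimate $\mathfrak{m}(q^{(i)}_{1,2}-q^{(i)}_{2,1})<\tfrac{1}{2}\log|\det\Lambda_i|$ (Lemma~\ref{lem_hyouka_mahlermeasure}). Without this common-eigenvector structure there is no obvious mechanism to bound $\lambda_{\mathrm{top}}$ from above, and your sketch does not supply one.
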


\textcolor{red}{We can replace $\{1,2\}^{\mathbb{N}}$ with its subshift $X$, as follows:}

\begin{thm}[A special case of Theorem \ref{thm_absence_for_binary}]\label{thm_special_case2}
       \textcolor{red}{Let $X$ be a subshift of $\{1,2\}^{\mathbb{N}}$ which admits shift-invariant ergodic Borel probability measure $\mu_X$.
       Assume the shift map on $X$ is surjective. Then, for $\mu_X$-almost all $(i_n)_n\in X$,
       the S-adic tilings belonging to $(i_n)_n$ for $\rho_1,\rho_2$ have zero absolutely continuous diffraction spectrum.}
\end{thm}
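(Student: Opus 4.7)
The plan is to derive Theorem \ref{thm_special_case2} as a specialization of the main Theorem \ref{thm_absence_for_binary}. I would begin by verifying that the pair $(\rho_1,\rho_2)$ of Figure \ref{Fig1}, together with the subshift $(X,\mu_X)$, fits the framework of Setting \ref{setting_binary_const_length}: both are binary block substitutions sharing a common constant expansion, and $(X,\mu_X)$ is an ergodic subshift with surjective shift map, which are precisely the measurable-dynamics hypotheses needed to define the S-adic hull and run the cocycle argument. With that verification in place, the substantive content passes through Theorem \ref{thm_liminf_implies_zero_acpart}, which replaces absence of an absolutely continuous component by a $\liminf$ growth bound on the norms of products of Fourier matrices $B_{i_n}(\xi)\cdots B_{i_1}(\xi)$ for Lebesgue-almost every $\xi\in\R^d$. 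In the present binary setting these are $2\times 2$ matrices whose entries are exponential sums over the tile positions displayed in Figure \ref{Fig1}.

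The randomness of $(i_n)$ enters through the Furstenberg--Kesten theorem: for each fixed $\xi$, the function $\log\|B_{i_1}(\xi)\|$ generates a subadditive cocycle over the shift on $(X,\mu_X)$, and ergodicity of $\mu_X$ forces the top Lyapunov exponent $\lambda_{\max}(\xi)$ to be $\mu_X$-almost surely constant in $(i_n)$; Oseledets supplies the full Lyapunov filtration when it is needed. A Fubini-type exchange of the ``Lebesgue-a.e.\ $\xi$'' and ``$\mu_X$-a.e.\ $(i_n)$'' quantifiers then reduces the spectral statement to the deterministic bound $\lambda_{\max}(\xi)<\tfrac{1}{2}\log\theta$ for Lebesgue-a.e.\ $\xi$, where $\theta$ is the common expansion determinant of $\rho_1$ and $\rho_2$. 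The hypothesis that the shift on $X$ is surjective is used here to guarantee that every sequence $(i_n)$ in a full-measure set is two-sidedly extendable, so the cocycle estimate can be applied to the S-adic tiling construction without boundary issues.

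Establishing this strict Lyapunov bound is, I expect, the main obstacle. The trivial estimate $\|B_i(\xi)\|\le\theta$ is saturated only on the (measure-zero) Bragg set, but promoting a pointwise strict inequality for the norm into a strict inequality for the exponent is not automatic: it requires ruling out simultaneous diagonalizability of the two Fourier matrices at generic $\xi$ and exploiting the combinatorial specifics of $\rho_1,\rho_2$ to obtain uniform transversality of the dominant directions along the random product. The strategy mirrors that of \cite{Manibo_binary, Manibo_thesis, Baake-Grimm_renormalisation} in the single-substitution case but must be adapted to the random product using the renormalisation framework of \cite{BFGR, Baake-Gahler, Baake-Grimm-Manibo}; the subadditive ergodic theorem then transfers a pointwise strict inequality into the strict Lyapunov inequality uniformly in the random sequence on a full-measure set, which is where ergodicity of $\mu_X$ is indispensable.
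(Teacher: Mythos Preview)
Your top-level move---deduce Theorem~\ref{thm_special_case2} from Theorem~\ref{thm_absence_for_binary} by checking hypotheses---is exactly what the paper intends; the theorem is stated as a special case and needs no separate argument beyond taking $m_a=2$, $X$ a subshift with the cylinder partition $E_j=\{x:x_1=j\}$, and noting that the primitivity condition (1) is satisfied because both substitution matrices of Figure~\ref{Fig1} have strictly positive entries. (A minor slip: $\rho_1,\rho_2$ do \emph{not} share the same expansion map---one is $\mathrm{diag}(4,3)$, the other $4I$---but Setting~\ref{setting_binary_const_length} allows this.)

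Where your sketch diverges from the paper's actual mechanism is in the cocycle setup, and this is a genuine structural gap. The product in~\eqref{eq_liminf} is $B^{(i_1)}(t)\,B^{(i_2)}(\phi_{i_1}^*t)\cdots B^{(i_k)}(\phi_{i_{k-1}}^*\!\cdots\phi_{i_1}^*t)$: the spectral argument changes at every step, so one cannot fix $\xi$ and obtain a stationary cocycle over $(X,\mu_X)$ with $\xi$ as a parameter. The paper instead passes to the natural extension $(Y,\nu,S_1)$ of $(X,\mu_X,S_0)$---this is where surjectivity is used, to guarantee an invertible extension exists---and builds a single skew map $R(\pi(t),y)=(\pi(\phi_i t),S_1 y)$ on $\mathbb{T}^d\times Y$. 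The nontrivial lemma (proved in the appendix, and requiring invertibility of $S_1$) is that $R$ is ergodic for $\mu_{\mathrm{L}}\times\nu$; Furstenberg--Kesten and Oseledets are then applied to the cocycle $C(z,y)=C^{(i)}(z)^{-1}$ over $R$, yielding Lyapunov exponents that are constants, not functions of $\xi$. No Fubini exchange of quantifiers is needed.

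The strict Lyapunov bound is also obtained quite differently from what you propose. Rather than ruling out simultaneous diagonalizability, the paper \emph{exploits} the fact that in the binary block case $(1,-1)^\top$ is a common eigenvector of every $C^{(i)}(z)$, with eigenvalue $q^{(i)}_{1,2}(z)-q^{(i)}_{2,1}(z)$ (Lemma~\ref{lem_eigenvector_for_C}). Birkhoff along this eigendirection gives one Lyapunov exponent as $-\sum_i\mu(E_i)\,\mathfrak{m}(q^{(i)}_{1,2}-q^{(i)}_{2,1})$, the determinant identity gives the other, and one concludes $\chi_+=0$ (Lemma~\ref{lem_whatis_lambda+-}). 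The strict gap then comes from the elementary Mahler-measure inequality $\mathfrak{m}(q^{(i)}_{1,2}-q^{(i)}_{2,1})<\log\sqrt{\det\phi_i}$ via Jensen (Lemma~\ref{lem_hyouka_mahlermeasure}), not from any transversality or irreducibility argument.
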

\textcolor{red}{Note that this is not included in Theorem \ref{thm_special_case1} because $\mu(X)$ might be zero.}

\textcolor{red}{We can replace $\rho_1,\rho_2$ with arbitrary finite family of binary block substitutions, with a mild assumption on
substitution matrices,  with possibly different
expansion maps. Furthermore, the dimension can be arbitrary:
for any $d=1,2,3\ldots$ and block substitutions in $\R^d$, we have similar results.}

This paper is organized as follows. In
Section~\ref{section_general_background}, we introduce our notation
and some necessary background.  Section~\ref{section_main-result}
contains our main results; in particular, we state and prove claims
(I) and (II) given above.  The first claim is proved in
Section~\ref{subsection_sufficient-condition}, while the second
claim is proved in Section~\ref{subsection_absence_acpart}.  We
defer the proofs of some of our claims in
Section~\ref{subsection_absence_acpart} to an appendix.

\section{General background}
\label{section_general_background}
\subsection{Notations}
For a finite set $F$, we denote its cardinality by $\card F$.  \textcolor{red}{In this
article, $\mu_{\mathrm{L}}$ denotes the Lebesgue measure on $\R^d$.}
The symbol $\mathbb{T}$ refers to the one-dimensional torus
$\{z\in\mathbb{C}\mid |z|=1\}$.  For a natural number $n$, we will identify $\T^n$
measure-theoretically with $[0,1)^n$, on which the Lebesgue
  measure $\mu_{\mathrm{L}}$ is the complete rotation-invariant probability
  measure.  Let $\pi\colon\R^n\rightarrow\mathbb{T}^n$ be defined via
  $\pi(s_1,s_2,\ldots ,s_n)=(e^{2\pi\im s_1},e^{2\pi\im s_2},\ldots ,e^{2\pi\im s_n})$.
  In $\R^d$, for $x\in\R^d$ and $R>0$, the closed ball
  $\{y\in\R^d\mid \|x-y\|\leqq R\}$ is denoted by $B(x,R)$. If $x=0$, we use the symbol
  $B_R$ for $B(0,R)$.


\subsection{A generality for tilings and substitutions}
  
In this section, we sketch a generality for the theory of tilings.  For
a detailed exposition, we refer to \cite{Baake-Grimm_vol1}.  
Let $d$ be a natural number and we consider tilings in $\mathbb{R}^d$.

Let $L$ be a finite set.  A \emph{\textcolor{red}{labelled} tile} is a pair $T=(S,\ell)$
consisting of a compact set $S$ in $\R^d$ with $\overline{S^{\circ}}=S$ 
(the closure of the interior coincides with the original $S$) and an element
$\ell\in L$. The set $S$ is called the \emph{support} of $T$ and
denoted by $\supp T$. The element $\ell$ is called the \emph{label} of
$T$.

Alternatively, we can consider ``unlabeled'' tiles, that is, a compact subset $T$ of $\R^d$ such that $\overline{T^{\circ}}=T$ .  
For an unlabeled tile $T$, we denote the
space it covers (that is, $T$ itself) by $\supp T$, called the support
of $T$, in order to cover the theory for labeled and for unlabeled
tiles by the same notation.  
\textcolor{red}{Both labelled tiles and unlabelled tiles are called tiles.}
We deal with both cases simultaneously by the above
notation. Note that each of the cases are required because (1) we often have
to distinguish two tiles with the same support by assigning them
different labels, as in Example~\ref{ex-Thue-Morse}, and because (2)
we often meet situations where tiles have different support and labels
are hence redundant, as in Example \ref{ex-Fibonacci}.

A set $\mathcal{P}$ of tiles in $\R^d$ is called a \emph{patch} if $(\supp
T)^{\circ}\cap (\supp S)^{\circ}=\emptyset$ for each distinct $S$ and
$T$ in $\mathcal{P}$.  The support of a patch $\mathcal{P}$ is the subset
$\bigcup_{T\in\mathcal{P}}\supp T$ of $\R$ and is denoted by
$\supp\mathcal{P}$. (Sometimes we take the closure after taking the union in
this definition, but in this article we only deal with situations
where the union is already a closed set.  We use the same notation as
the support of a tile, but there is no possibility of confusion.)  A
patch $\mathcal{P}$ is called a tiling if $\supp\mathcal{P}=\R^d$.

\textcolor{red}{For an unlabelled tile $S$, $S+x$ denotes the usual translation.}
For a labelled tile $T=(S,\ell)$ and $x\in\R^d$, we set $T+x=(S+x,\ell)$. 
For a
patch $\mathcal{P}$ \textcolor{red}{(with either labelled or unlabelled tiles)} and $x\in\R^d$, we define the translate of $\mathcal{P}$
by $x$ via
\begin{align*}
  \mathcal{P}+x=\bigl\{T+x\mid T\in\mathcal{P}\bigr\}.
\end{align*}
A tiling $\mathcal{T}$ is said to be \emph{non-periodic} if $x=0$ is
the only element in $\R^d$ that satisfies
$\mathcal{T}+x=\mathcal{T}$. In this article, we are mainly interested
in non-periodic tilings.

There are several ways to construct interesting non-periodic tilings.
In this article, we consider tilings constructed via
substitution rules.  
First, for a finite set $\mathcal{A}$ of tiles in $\R^d$, let $\mathcal{A}^{*}$ be
the set of all patches of which tiles are translates of elements of
$\mathcal{A}$.  A \emph{substitution rule} (or an \emph{inflation rule}) is
a triple $\sigma=(\mathcal{A},\phi,\rho)$ where
\begin{itemize}
\item $\mathcal{A}$ is a finite set of tiles, called the \emph{alphabet} of $\sigma$,
\item  $\phi\colon\R^d\rightarrow\R^d$ is a linear map \textcolor{red}{with $\min_{\|v\|=1}\|\phi(v)\|>1$},
called the \emph{expansion map}, and
\item $\rho$ is a map $\mathcal{A}\rightarrow\mathcal{A}^{*}$ such that
\begin{align*}
\supp\rho(T)=\phi(\supp T)
\end{align*}
holds for each $T\in\mathcal{A}$.
\end{itemize}
 The map $\rho$ itself is also often referred to
as a substitution (or inflation) rule. 
\textcolor{red}{Usually, the expansion map is defined as a linear map whose eigenvalues are greater than $1$ in modulus, but
for a technical reason, we use a stronger definition.}
The third condition (on the
supports) means that the map $\rho$ gives the result of first
expanding the tile $T$ by the expansion map $\phi$ and then
subdividing it to obtain a patch $\rho(T)$.  The following examples
will illustrate this point.

\begin{ex}\label{ex-Thue-Morse}
Let us consider the case where $d=1$.
Let $T_{1}=([0,1],1)$ and $T_{2}=([0,1],2)$. The \emph{Thue--Morse
  substitution} is a substitution $\rho^{}_{\text{TM}}$ of which alphabet is
$\{T_{1},T_{2}\}$, expansion map is $\R\ni x\mapsto 2x\in\R$ and the rule is given by
\begin{align*}
\rho^{}_{\text{TM}}(T_{1})&=\{T_{1},T_{2}+1\}\\
\rho^{}_{\text{TM}}(T_{2})&=\{T_{2},T_{1}+1\}.
\end{align*}
The final condition in the definition of a substitution rule is indeed
satisfied for this rule, since $\supp\rho^{}_{\text{TM}}(T_{i})=[0,2]$
and $2\supp T_{i}=[0,2]$ for $i=1,2$.
\end{ex}

\begin{ex}\label{ex-Fibonacci}
Again, consider the case where $d=1$.
Set $\tau=\frac{1+\sqrt{5}}{2}$, the golden ratio.
Let $\mathcal{A}=\{T_{a},T_{b}\}$, where $T_{a}=[0,\tau]$ and $T_{b}=[0,1]$.
The \emph{Fibonacci substitution} is the map $\rho^{}_{\text{F}}$
\begin{align*}
\rho^{}_{\text{F}}(T_{a})&=\{T_{a}, T_{b}+\tau\},\\
\rho^{}_{\text{F}}(T_{b})&=\{T_{a}\}.
\end{align*}
Again, with an expansion map $\R\ni x\mapsto \tau x\in\R$,
the final condition in the definition of a substitution rule is
satisfied since $\tau^{2}=\tau+1$.
\end{ex}

\begin{ex}\label{ex_block}
\textcolor{red}{
       A substitution rule such that the supports of elements of the alphabet are all $[0,1]^d$ and the
       expansion map is a diagonal matrix with natural numbers greater than 1 as diagonal entries is
       called a block substitution. For example, Figure \ref{fig_block_substi1} and \ref{fig_block_substi2}
       are block substitutions with a common alphabet $\mathcal{A}=\{([0,1]^2,B), ([0,1]^2,W)\}$ and an expansion map defined by}
       \begin{align*}
                   \begin{pmatrix}
                        4,&0\\
                        0,&3
                   \end{pmatrix},
       \end{align*}
       \textcolor{red}{and one by $4I$ ($I$ being the identity matrix), respectively.}
\end{ex}

For a substitution rule $\rho$, one can define a displacement matrix
and Fourier matrix, which will play important roles in the study of
diffraction, as follows.  Let
$\mathcal{A}=\{T_{1},T_{2},\ldots,T_{n_{a}}\}$ be the alphabet for the
substitution $\rho$ in $\R^d$.  For each $i$ and $j$, there is a \emph{digit
  set} $T_{i,j}\subset\R^d$ for $\rho$, which is determined by
\begin{align}
       \rho(T_{j})=\bigl\{T_{i}+x\mid i\in\{1,2,\ldots ,n_{a}\}, x\in T_{i,j}\bigr\}.
\end{align}
The \emph{substitution matrix} of $\rho$ is the matrix whose
$(i,j)$-element is $\card T_{i,j}$. 
We then define the \emph{Fourier matrix}, $B$, which is a
$n_{a}\times n_{a}$ matrix function on $\R^d$.  We need to specify its
value $B(t)$ for each $t\in\R^d$. We define the $(i,j)$ component of
$B(t)$, denoted by $B_{i,j}(t)$, as
\begin{align*}
      B_{i,j}(t) = \sum_{s\in T_{i,j}}e^{2\pi\im\langle s,t\rangle},
\end{align*}
where $\langle\cdot,\cdot\rangle$ is the standard inner product in $\R^d$.
Let us consider an explicit example to illustrate these definitions.

\begin{ex}
    For \textcolor{red}{the} Thue--Morse substitution $\rho^{}_{\text{TM}}$ from Example
    \ref{ex-Thue-Morse},
    the Fourier matrix is
    \begin{align*}
          B(t)=
          \begin{pmatrix}
                1, &e^{2\pi\im t}\\
                e^{2\pi\im t}, &1
          \end{pmatrix}.
    \end{align*}
\end{ex}

Given a geometric substitution rule $\rho$ in $\R^d$, one can construct a tiling in $\R^d$
by iterating the map $\rho$.  To be more precise, for a given geometric
substitution rule $\rho$, we can define a map
$\rho\colon\mathcal{A}^{*}\rightarrow\mathcal{A}^{*}$ (denoted by the
same symbol), as follows.  First, for $T\in\mathcal{A}$ and $x\in\R^d$,
set $\rho(T+x)=\rho(T)+\phi(x)$ ($\phi$ being the expansion
map). Then we define $\rho(\mathcal{P})$, where $\mathcal{P}$ is a
patch consisting of translates of elements of $\mathcal{A}$ (that is,
an element of $\mathcal{A}^{*}$), via
\begin{align*}
  \rho(\mathcal{P})=\bigcup_{T\in\mathcal{P}}\rho(T).
\end{align*}
Since now the domain and the range of the new map $\rho$ are the same, we can iterate it.
We can often take the limit
\begin{align}
    \lim_{n\rightarrow\infty}\rho^{kn}(\mathcal{P})\label{limit-def-self-similar-tiling}
\end{align}
to obtain a tiling, for a suitable $k>0$ and an initial patch
$\mathcal{P}$.  The convergence in equation
\eqref{limit-def-self-similar-tiling} is with respect to the
\emph{local matching topology}, in which two patches $\mathcal{P}$ and
$\calQ$ are ``close'' if there are small displacements $x,y\in\R^d$ such
that $\mathcal{P}+x$ and $\calQ+y$ agree inside $B_R$ for some
large $R>0$.  (See, for example, \cite[p.129]{Baake-Grimm_vol1}.)

\begin{ex}
For the Thue--Morse substitution $\rho^{}_{\text{TM}}$, define $\mathcal{P}$ via
\begin{align*}
       \mathcal{P}=\{T_{1}-1, T_{0}\}.
\end{align*}
Then, $\rho_{\text{TM}}^{2}(\mathcal{P})$ is (if we write it
symbolically) $1001.0110$, where $.$ denotes the place of origin. We
obtain $\rho_{\text{TM}}^{2}(\mathcal{P})\supset\mathcal{P}$, and this
in turn means that
$\rho_{\text{TM}}^{2}(\mathcal{P})\subset\rho_{\text{TM}}^{4}(\mathcal{P})\subset\rho_{\text{TM}}^{6}(\mathcal{P})\cdots$.
The patches obtained by iteration ``grow'' in $\R$, and in the limit they form a tiling
\begin{align*}
  \lim_{n\rightarrow\infty}\rho_{\text{TM}}^{2n}(\mathcal{P})
  =\bigcup_{n>0}\rho_{\text{TM}}^{2n}(\mathcal{P}),
\end{align*}
which is called a \emph{Thue--Morse tiling}.
\end{ex}

An \emph{S-adic tiling} is a tiling obtained by replacing each of $kn$
$\rho$'s in $\rho^{kn}(\mathcal{P})$ in equation
\eqref{limit-def-self-similar-tiling} with a substitution rule from a
finite set of geometric substitution rules. To be precise,
consider a finite set $\{\rho_{1},\rho_{2},\ldots,\rho_{m_{a}}\}$ of
geometric substitution rules in $\R^d$ that share the same alphabet
$\mathcal{A}$ but do not necessarily share the same expansion map.
We call sequences $i_{1},i_{2},\ldots$ of elements
of $\{1,2,\ldots, m_{a}\}$ \emph{directive sequences}. Given a
directive sequence $i_{1},i_{2},\ldots$, any tiling $\mathcal{T}$ of
the form
\begin{align}
  \mathcal{T} = \lim_{l\rightarrow\infty}\rho_{i_{1}}\circ\rho_{i_{2}}\circ
  \cdots\textcolor{red}{\circ}\rho_{i_{n_{l}}}(\mathcal{P}_{l}),
\label{eq_def_S-adic-tiling}
\end{align}
where $n_{1}<n_{2}<\cdots$ and where \textcolor{red}{the $\mathcal{P}_{l}$ are patches
that are included in some $\rho_{j_1}\circ\rho_{j_2}\circ\cdots\circ\rho_{j_m}(P)$($m>0, j_1,j_2,\ldots, j_m\in\{1,2,\ldots, m_a\}$ and
$P\in\mathcal{A}$),
 is called an \emph{S-adic
  tiling belonging to the directive sequence $i_{1},i_{2},\cdots$ for the family $\{\rho_1,\rho_2,\ldots, \rho_{m_a}\}$}.
  This is a geometric version of the symbolic S-adic sequences (see for example \cite{Berthe-Delecroix}) and 
  the order of $\rho_{i_j}$ in \eqref{eq_def_S-adic-tiling} comes from the symbolic counterpart.
  The convergence in \eqref{eq_def_S-adic-tiling} is assured by the following finiteness condition.
  In general, the family $\{\rho_1,\rho_2,\ldots,\rho_{m_a}\}$ is said to have
  \emph{finite local complexity (FLC)} \label{def_FLC} if for each compact $K\subset\R^d$ the set
  \begin{align*}
        \{\rho_{j_1}\circ\rho_{j_2}\circ\cdots\circ\rho_{j_n}(P)\sqcap (K+x)
       \mid P\in\mathcal{A}, n>0, j_1, j_2,\ldots ,j_n\in\{1,2,\ldots ,m_a\}, x\in\R^d\}
  \end{align*}
  is finite up to translation, where the symbol $\sqcap$ is defined via
  \begin{align*}
        \mathcal{P}\sqcap S=\{T\in\mathcal{P}\mid \supp T\cap S\neq\emptyset\}
  \end{align*}
  for a patch $\calP$ in $\R^d$ and an $S\subset\R^d$.
If the set of substitutions $\{\rho_1,\rho_2,\ldots,\rho_{m_a}\}$ have FLC,
given an arbitrary directive sequence $i_{1},i_{2},\ldots$,
we can find some $n_{1}<n_{2}<\cdots$ and some patches $\mathcal{P}_{l}$
such that the limit in \eqref{eq_def_S-adic-tiling}
converges, because the patches after the $\lim$ symbol in \eqref{eq_def_S-adic-tiling}
are included in a compact set.
 This is seen by the fact that a space $X$ of patches in $\R^d$ such that for each compact $K\subset\R^d$
 \begin{align*}
       \{\calP\sqcap (K+x)\mid \calP\in X, x\in\R^d\}
 \end{align*}
 is finite up to translation is relatively compact, }
 by the standard diagonalization argument (\cite[Theorem
  1.1]{Sadun} or \cite[Corollary 3.20 and Lemma
  3.24]{Nagai_local-matching-top}).
  \textcolor{red}{We can start with a sequence $(\calQ_n)_n$ of patches and
  the sequence}
  \begin{align*}
          \rho_{i_1}\circ\rho_{i_2}\circ\cdots\circ\rho_{i_n}(\calQ_n), n=1,2,\ldots
  \end{align*}
  \textcolor{red}{admits a convergent subsequence.}

In the discussion of \emph{symbolic} S-adic sequences, we can consider
cases where the symbolic substitution rules do not share the same
alphabet, but in this article we only deal with the case where
substitutions are geometric and share a common alphabet.  
\textcolor{red}{This is a strong assumption but all the block substitutions,
which we mainly deal with in this paper, are included in our scope.}
Often, given
a directive sequence $i_{1},i_{2},\ldots$, we use the notation
\begin{align}
       \rho_{i[k,l)}=\rho_{i_{k}}\circ\rho_{i_{k+1}}\circ\textcolor{red}{\cdots}\circ\rho_{i_{l-1}},
\end{align}
for two positive integers $k<l$.

Given an S-adic tiling of the form \eqref{eq_def_S-adic-tiling}, the
sequence $\bigl(\rho_{i[2,n_{l})}(\mathcal{P}_{l})\bigr)_{l>0}$ admits a
  convergent subsequence, again by a diagonalization argument as
  above. We can take a subsequence
  $\bigl(n^{(2)}_{l},\mathcal{P}^{(2)}_{l}\bigr)_{l}$ of the sequence
  $(n_{l},\mathcal{P}_{l})$ so that that the limit
  $\lim_{l}\rho_{[2,n^{(2)}_{l})}(\mathcal{P}^{(2)}_{l})$
    converges. We can further take a subsequence
    $\bigl(n^{(3)}_{l},\mathcal{P}^{(3)}_{l}\bigr)_{l}$ of
    $\bigl(n^{(2)}_{l},\mathcal{P}^{(2)}_{l}\bigr)_{l}$ such that the limit
    $\lim_{l}\rho_{i[3,n^{(3)}_{l})}(\mathcal{P}^{(3)}_{l})$
      converges.  Proceeding in this way, we can take nested
      subsequences $\bigl(n^{(k)}_{l}, \mathcal{P}^{(k)}_{l}\bigr)_{l}$ for
      $k=1,2,3,\ldots$. We set $m_{l}=n^{(l)}_{l}$ and
      $\mathcal{Q}_{l}=\mathcal{P}^{(l)}_{l}$ for $l=1,2,\ldots$. Then, we
      have convergences
\begin{align*}
      \mathcal{T}^{(k)}=\lim_{l\rightarrow\infty}\rho_{i[k,m_{l})}(\mathcal{Q}_{l})
\end{align*}
for each $k>0$ with common $(m_{l})^{}_{l}$ and
$(\mathcal{Q}_{l})^{}_{l}$. This implies that, for each $k$, we have
$\rho_{k}(\mathcal{T}^{(k+1)})=\mathcal{T}^{(k)}$. These
``de-substituted tilings''\label{explanation_de-substitution}
$\mathcal{T}^{(2)},\mathcal{T}^{(3)},\ldots$ of the given
$\mathcal{T}=\mathcal{T}^{(1)}$ will be useful
later;
\textcolor{red}{such an inverse-limit structure enables us to construct renormalization scheme,
by which we can use ergodic theory to study the diffraction spectrum for $\mathcal{T}^{(1)}$.}

\subsection{Patch frequencies}
In order to discuss the diffraction of tilings, we use the concept of
the frequency of patches.  In general, if $\mathcal{T}$ is a tiling in $\R^d$, if
$\mathcal{P}$ is a (usually finite) non-empty patch and if the limit
\begin{align*}
  \lim_{R\rightarrow\infty}\frac{1}{\mu_{\mathrm{L}}(B_R)}
  \card\bigl\{t\in B_R\mid\mathcal{P}+t\subset\mathcal{T}\bigr\}
\end{align*}
converges, this limit is called the \emph{frequency of $\mathcal{P}$
  in $\mathcal{T}$} and denoted by $\freq_{\mathcal{T}}\mathcal{P}$ or $\freq\mathcal{P}$.  (Here, we
consider averaging with respect to $\bigl\{B_R\mid R>0\bigr\}$, but we
can also consider averaging along  van Hove sequences.)  If
$\mathcal{T}$ is an S-adic tiling, often the following \emph{uniform
  patch frequency} holds.

\begin{thm}\label{thm_S-adic_patch-freq}
         Let\/ $\rho_{1},\rho_{2},\ldots,\rho_{m_{a}}$ be (geometric)
         substitution rules in $\R^d$ that share a common alphabet.
          Let\/ $A_{i}$ be the substitution matrix
         for\/ $\rho_{i}$.  Take a directive sequence\/
         $i_{1},i_{2},\ldots\in\{1,2,\ldots, m_{a}\}$ and an S-adic
         tiling\/ $\mathcal{T}$ belonging to this directive sequence.
         Assume the following four conditions:
         \begin{enumerate}
         \item there are\/ $n_{0}>0$ and\/
           $i_{0,1},i_{0,2},\ldots, i_{0,n_{0}}\in\{1,2,\ldots, m_{a}\}$ such that
         all entries in the product matrix
         \begin{align*}
                A_{i_{0,1}}A_{i_{0,2}}\cdots A_{i_{0,n_{0}}}
         \end{align*}
         are greater than\/ $0$;
         \item for any\/ $n>0$ there is\/ $k>n$ such that
         \begin{align*}
                    i_{k}=i_{0,1},\quad i_{k+1}=i_{0,2},\quad \ldots,\quad i_{k+n_{0}-1}=i_{0,n_{0}},
         \end{align*}
         \item for each $i$, every row in $A_{i}$ is non-zero, and
         \item \textcolor{red}{for each $P\in\mathcal{A}$, the sequence $(\phi_{i_1}\circ\phi_{i_2}\circ\cdots\circ\phi_{i_n}(\supp P))_n$ has
         the van Hove property.}
         \end{enumerate}
         Then, for any finite non-empty patch\/ $\mathcal{P}$, there
         is\/ $c_{\mathcal{P}}\in\R$ such that
         \begin{align*}
           \lim_{R\rightarrow\infty}\frac{1}{\mu_{\mathrm{L}}(B_R)}
           \card\bigl\{t\in B_R\mid \mathcal{P}+t\subset\mathcal{S}\bigr\}=c_{\mathcal{P}}
         \end{align*}
         converges uniformly for $\mathcal{S}\in \{\mathcal{T}+t\mid t\in\R^d\}$.
\end{thm}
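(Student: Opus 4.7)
The plan is to count, for each $\mathcal{S}$ in the orbit of $\mathcal{T}$, occurrences of $\mathcal{P}$ inside $B_{R}$ by decomposing $\mathcal{S}$ into super-tiles drawn from the de-substituted tilings $\mathcal{S}^{(k)}$ constructed before the statement, and then to pass to the limit using the van Hove hypothesis (4) together with a weak-ergodicity theorem for products of non-negative matrices. Fix a finite non-empty patch $\mathcal{P}$. Since $\mathcal{S}=\rho_{i[1,k)}(\mathcal{S}^{(k)})$, the super-tiles $\rho_{i[1,k)}(P+x)$ with $(P,x)\in\mathcal{S}^{(k)}$ partition $\mathcal{S}$. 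Put
\begin{align*}
N_{k}(\mathcal{P},P)=\card\bigl\{v\in\R^{d}\mid \mathcal{P}+v\subset\rho_{i[1,k)}(P)\bigr\},
\end{align*}
and group the translates of $\mathcal{P}$ in $\mathcal{S}\sqcap B_{R}$ by which super-tile contains them. This yields
\begin{align*}
\card\bigl\{v\in B_{R}\mid \mathcal{P}+v\subset\mathcal{S}\bigr\}=\sum_{P\in\mathcal{A}}N_{k}(\mathcal{P},P)\cdot\card\bigl\{(P,x)\in\mathcal{S}^{(k)}\mid \rho_{i[1,k)}(P+x)\subset B_{R}\bigr\}+E_{k,R},
\end{align*}
where $E_{k,R}$ collects translates of $\mathcal{P}$ straddling a super-tile boundary and super-tiles that meet $\partial B_{R}$ without lying inside $B_{R}$.

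Next, I control $E_{k,R}$. By FLC, the number of translates of $\mathcal{P}$ crossing the boundary of a given super-tile is bounded by a constant depending only on $\diam\mathcal{P}$ times the volume of a $\diam\mathcal{P}$-thickening of $\partial\phi_{i[1,k)}(\supp P)$; hypothesis (4) says exactly that the ratio of this thickening-volume to $\mu_{\mathrm{L}}(\phi_{i[1,k)}(\supp P))$ tends to $0$ uniformly in $P$ as $k\to\infty$. Combined with the standard van Hove estimate for super-tiles meeting $\partial B_{R}$, this shows $|E_{k,R}|/\mu_{\mathrm{L}}(B_{R})$ can be made smaller than any prescribed $\e>0$ by first taking $k$ and then $R$ large, with bounds independent of $\mathcal{S}$.

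It remains to prove, uniformly in $\mathcal{S}$, the existence of
\begin{align*}
d_{k}(P):=\lim_{R\to\infty}\frac{\card\bigl\{(P,x)\in\mathcal{S}^{(k)}\mid \rho_{i[1,k)}(P+x)\subset B_{R}\bigr\}}{\mu_{\mathrm{L}}(B_{R})},
\end{align*}
together with convergence in $k$ of $\sum_{P}N_{k}(\mathcal{P},P)d_{k}(P)$. Iterating the super-tile argument one level up, existence of $d_{k}(P)$ reduces to existence of uniform tile frequencies in $\mathcal{T}^{(k)}$, which in turn reduces to the asymptotic behaviour of the non-homogeneous products $A_{i_{1}}A_{i_{2}}\cdots A_{i_{k}}$ of substitution matrices. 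Conditions (1)--(2) place arbitrarily late strictly-positive blocks $A_{i_{0,1}}\cdots A_{i_{0,n_{0}}}$ inside these products, condition (3) prevents rank collapse, and a Seneta-type weak-ergodicity theorem then furnishes geometric convergence of column-normalized products to a rank-one limit, yielding both the uniform existence of $d_{k}(P)$ and the required Cauchy property in $k$.

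The main obstacle will be that the two asymptotics compete: super-tile diameters grow with $k$, so the van Hove correction forces $R$ to be much larger than $\diam\rho_{i[1,k)}(P)$ before the main term stabilises, while the matrix-product convergence only improves with $k$. The geometric rate from the weak-ergodicity theorem must be combined with the quantitative van Hove estimate in (4) to show that letting $k$ grow slowly with $R$ produces a single limit $c_{\mathcal{P}}$ independent of $\mathcal{S}$.
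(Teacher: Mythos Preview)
Your outline is correct and follows essentially the same route as the paper, which in fact gives no proof beyond the one-line remark that this is the geometric analogue of the argument in \cite[Section~5.2]{Berthe-Delecroix}; that argument is precisely the super-tile decomposition combined with Seneta-type weak ergodicity for the non-homogeneous matrix products $A_{i_1}\cdots A_{i_k}$ under a recurring-positive-block hypothesis, which is what you sketch. Your identification of the competing asymptotics (super-tile growth versus matrix-product contraction) and the need to let $k$ grow slowly with $R$ is the right way to organise the geometric version, and is exactly the kind of detail the paper leaves to the reader.
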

\begin{proof}[Sketch of proof]
      This is the ``geometric'' version of the argument in \cite[Section 5.2]{Berthe-Delecroix}
      and the proof is similar.
\end{proof}

Note that the uniform convergence on the orbit $\{\mathcal{T}+t\mid t\in\R^d\}$ implies the uniform
convergence on the continuous hull, the closure of the orbit with respect to the
local matching topology.
Note also that for the single substitution case ($m_{a}=1$), the above conditions (1)-(3) for
the convergence of patch frequency are satisfied if the substitution matrix is primitive.

\subsection{Fourier transform, diffraction and the Lebesgue decomposition}
The diffraction measures associated with tilings are physically
important. They model the results of diffraction
experiments. Mathematically, the diffraction measure of a tiling is
the Fourier transform of the autocorrelation measure associated
with the tiling, described as follows.

In what follows, we have to deal with objects such as $\sum_{t\in
  D}c_{t}\delta_{t}$, where $D\subset\R^d$, $c_{t}\in\mathbb{C}$ and
$\delta_{t}$ is the Dirac (point) measure at $t$. 
We consider them as complex measures in the sense of \cite{Bourbaki_integration_chap1-4}
and call them Radon measures.
For a Radon measure $\mu$ on $\R^d$ and a function $\varphi\in L^1(\mu)$, we use a notation
\begin{align*}
      \langle\varphi,\mu\rangle=\int_{\mathbb{R}^d}\varphi d\mu.
\end{align*}

Let $C_{c}(\R^d)$ denote the vector space of all complex-valued,
continuous, compactly supported functions on $\R^d$. 
According to \cite{AdL-Fourier}, a Radon measure $\mu$ on $\R^d$ is said to
be \emph{Fourier transformable} if there is another Radon measure
\textcolor{red}{$\nu$} on $\R^d$ such that, for each $\varphi,\psi\in
C_{c}(\R^d)$, the inverse Fourier transform $\widecheck{\varphi*\psi}$
of the convolution of $\varphi$ and $\psi$ is in $L^{1}(\nu)$ and
\begin{align*}
\langle\varphi*\psi ,\mu\rangle=\left\langle\widecheck{\varphi*\psi} ,\nu\right\rangle
\end{align*}
If such a $\nu$ exists, it is unique, called the
\emph{Fourier transform} of $\mu$ and denoted by $\hat{\mu}$.  It is known that if $\mu$
is \emph{positive definite}, that is, if for each $\varphi\in
C_{c}(\R^{d}$) we have
\begin{align*}
     \langle \mu, \varphi*\tilde{\varphi}\rangle\geqq 0,
\end{align*}
then $\mu$ is Fourier transformable and the Fourier transform
$\hat{\mu}$ is positive \cite[Theorem
  4.11.5]{Baake-Grimm-vol2}.

Given a  Radon measure $\mu$, we define its \emph{diffraction measure}
as follows.
First, assume the following limit, the \emph{autocorrelation measure}, exists:
\begin{align}
       \mu\circledast\tilde{\mu}=
       \lim_{R\rightarrow\infty}\frac{\maprestriction{\mu}{B_{R}}*
                                  \maprestriction{\tilde{\mu}}{B_{R}}}
                                  {\mu_{\mathrm{L}}(B_{R})},\label{eq_def_autocorrelation}
\end{align}
where, for a Radon measure $\mu$  and a subset $S\subset\R$, the
restriction $\maprestriction{\mu}{S}$ is a Radon measure that sends
$\varphi\in C_{c}(\R)$ to $\int_{S}\varphi \, u\, d\mu$.  $\tilde{\mu}$
is defined via
$\langle\tilde{\mu},\varphi\rangle=\overline{\langle\mu,\widetilde{\varphi}\rangle}$,
where $\widetilde{\varphi}(t)=\overline{\varphi(-t)}$ for each $t\in\R^d$.
The limit \eqref{eq_def_autocorrelation} is nothing but a Radon measure
that sends $\varphi$ to
\begin{align*}
  \lim_{R\rightarrow\infty}\frac{1}{\mu_{\mathrm{L}}(B_{R})}\int_{B_{R}}\int_{B_{R}}\varphi(s-t)\,u(s)\,
  \overline{u(-t)}\,d\mu(s)\,d\mu(t),
\end{align*}
By construction, this limit
$\mu\circledast\tilde{\mu}$ is positive definite, and so its
Fourier transform exists and is positive.
We call this Fourier transform the \emph{diffraction measure} for $\mu$
\cite[Definition 9.2]{Baake-Grimm_vol1}.

In general, given a finite set $\mathcal{A}=\{T_1,T_{2},\ldots,
T_{n_{a}}\}$ of tiles and a tiling $\mathcal{T}$ in $\R^d$ whose tiles are
translates of elements of $\mathcal{A}$, we set \textcolor{red}{$D_{i}$} via
\begin{align*}
     D_i=\{t\in\mathbb{R}^d\mid T_i+t\in\mathcal{T}\}.
\end{align*}
 We then take complex numbers $w_{1},w_{2},\ldots, w_{n_{a}}$,
and consider a Radon measure
\begin{align*}
      \mu_{\mathcal{T}}= \sum_{i=1}^{n_{a}}w_{i}\sum_{t\in D_{i}}\delta_{t}.
\end{align*}
The diffraction measure for $\mu_{\mathcal{T}}$ is called the
\emph{diffraction measure for $\mathcal{T}$}.  It is easy to prove
that the autocorrelation measure is
\begin{align}
      \mu_{\mathcal{T}}\circledast\widetilde{\mu_{\mathcal{T}}}=
      \sum_{i,j=1}^{n_{a}}w_{i}\overline{w_{j}}\,
      \sum_{z\in\R}\freq_{\mathcal{T}}\{T_{j},T_{i}+z\}\,\delta_{z}.
      \label{autoco_measure_for_tiling}
\end{align}

The \emph{Lebesgue decomposition}\label{explanation_Lebesgue_dec} 
\cite[\S 5]{Bourbaki_integration_chap5}
of
a Radon measure is fundamental in the theory of diffraction. \textcolor{red}{In general,
a Radon measure $\mu$ on $\R^d$ is \emph{pure point} if its total variation $|\mu|$ is pure point, that is, a sum of Dirac measures.}
 \textcolor{red}{If $|\mu|(\{x\})=0$ for each $x\in\R^d$,} then $\mu$ is said to be \emph{continuous}. Any
Radon measure $\mu$ is uniquely decomposed into its pure point part
$\mu_{\text{pp}}$ and continuous part $\mu_{\text{c}}$.  The
continuous part is further decomposed into the singular continuous
component $\mu_{\text{sc}}$, which is mutually singular with the Lebesgue
measure on $\R^d$, and absolutely continuous component
$\mu_{\text{ac}}$, which is absolutely continuous with respect to the 
Lebesgue measure. Thus we have a decomposition
\begin{align}
      \mu=\mu_{\text{pp}}+\mu_{\text{sc}}+\mu_{\text{ac}}.\label{Lebesgue_dec_for_mu}
\end{align}
This decomposition of $\mu$ into its pure point, its continuous and
singular, and its continuous and absolutely continuous part is unique.

That
$\mu_\text{ac}$ is absolutely continuous means that there is a locally
integrable function $f\in L^{1}_{\text{loc}}(\R^d)$ such that
\begin{align*}
   \langle\mu_\text{ac},\varphi\rangle=\int\varphi \, f\, d\mu_{\mathrm{L}}
\end{align*}
holds for each $\varphi\in C_{c}(\R^d)$, where the right-hand side is
the integral with respect to the Lebesgue measure $\mu_{\mathrm{L}}$.  This $f$
is called the \emph{Radon--Nikodym derivative} of the Radon measure
$\mu$.

\section{Main results}
\label{section_main-result}
\subsection{A relation between the asymptotic behavior of Fourier matrices and absolutely continuous spectrum for S-adic tilings}
\label{subsection_sufficient-condition}
In this section, we prove a sufficient condition for the absence of the absolutely
continuous part of the diffraction measure for S-adic tilings. The
following setting is assumed for the whole section.

\begin{setting}\label{setting_relation_exponents_absconti-spec}
In this section, we take finite set\/ $\{\rho_{1},\rho_{2},\ldots, \rho_{m_{a}}\}$ of substitution rules \textcolor{red}{in $\R^d$} that share the
same arbitrary \textcolor{red}{(not necessarily $[0,1]^d$-supported)} alphabet\/ $\mathcal{A}=\{T_{1},T_{2},\ldots, T_{n_{a}}\}$.
\textcolor{red}{We assume the family $\{\rho_1,\rho_2,\ldots, \rho_{m_a}\}$ has FLC (page \pageref{def_FLC}).}
(Note that each substitution here is a ``geometric'' one and not a
``symbolic'' one.)  The existence of such common tiles and alphabet
 is the assumption
which we start with. Let\/ $\phi_{i}$ be the
expansion map for the substitution\/ $\rho_{i}$.  
(For different $i$ and $j$, the maps $\phi_i$ and $\phi_j$ may be different.)
The 
Fourier matrix for\/ $\rho_{i}$ is denoted by
 $B^{(i)}$, where\/
$B^{(i)}(t)=(B^{(i)}_{k,j}(t))_{k,j}$.

We consider a directive sequence\/ $(i_{j})^{}_{j=1,2,\cdots}$ \textcolor{red}{in $\{1,2,\ldots, m_a\}^{\mathbb{N}}$} and
let\/ $\mathcal{T}^{(1)}$ be an S-adic tiling that belongs to\/
$(i_{j})_{j}$. As we have seen on page
\pageref{explanation_de-substitution}, we have an increasing sequence\/
$n_{1}<n_{2}<\cdots$ of natural numbers and patches\/ $\mathcal{P}_{l}$
consisting of translates of alphabets such that
\begin{align*}
  \mathcal{T}^{(k)}&=\lim_{l\rightarrow\infty}
  \rho_{i_{k}}\circ\rho_{i_{k+1}}\circ\cdots\circ\rho_{i_{n_{l}}}(\mathcal{P}_{l})\\
     &=\lim_{l\rightarrow\infty}\rho_{i[k,n_{l})}(\mathcal{P}_{l})
\end{align*}
converges for each\/ $k=1,2,\cdots$.  Note that we do not assume
recognizability here, but we do assume that, for each\/
$\mathcal{T}^{(k)}$, the patch frequencies converge.
\end{setting}

Since each $\rho_{i}$, regarded as a map that sends a patch
$\mathcal{P}$ to another patch $\rho_{i}(\mathcal{P})$, is continuous
with respect to the local matching topology, we see that
\begin{align*}
      \rho_{i_{k}}(\mathcal{T}^{(k+1)})=\mathcal{T}^{(k)}
\end{align*}
for each $k=1,2,\ldots$.
This can be used to ``compare'' \textcolor{red}{the autocorrelation measure for $\mathcal{T}^{(k)}$ and one for
$\mathcal{T}^{(k+1)}$.}

The fundamental idea to study the diffraction spectrum is to use
renormalization equations
 \cite{BFGR, Baake-Gahler, Baake-Grimm_renormalisation, 
 Baake-Grimm-Manibo, Manibo_binary, Manibo_thesis}.
 \textcolor{red}{The above ``de-substitution'' or inverse-limit structure gives us a renormalization scheme,
 which in turn gives us a sufficient condition for zero absolutely continuous spectrum in terms of
 an asymptotic of norms of Fourier matrices (Theorem \ref{thm_liminf_implies_zero_acpart}).
  Such an asymptotic behavior can be checked by ergodic theory as in
 Section \ref{subsection_absence_acpart}.
 The special case of Theorem \ref{thm_liminf_implies_zero_acpart} for the substitution case (the case where $m_a=1$)
 was proved by Ma\~nibo \cite{Manibo_binary, Manibo_thesis}.
 Below we adapt Ma\~nibo's idea to the general S-adic case.}

\textcolor{red}{
The goal of this section is to prove the following theorem.}
\begin{thm}\label{thm_liminf_implies_zero_acpart}
        \textcolor{red}{If there is\/ $\e>0$ such that
        \begin{align}
               \liminf_{k\rightarrow\infty}\Bigl(&\frac{1}{2k}\log\det\phi_{i_{1}}\det\phi_{i_{2}}\cdots\det\phi_{i_{k}}\nonumber\\
               &-\frac{1}{k}\log\bigl\|B^{(i_{1})}(t)B^{(i_{2})}(\phi^*_{i_{1}}(t))\cdots B^{(i_{k})}(\phi^*_{i_{k-1}}\circ\phi^*_{i_{k-2}}\circ\cdots\circ\phi^*_{i_{1}}(t))\bigr\|\Bigr)>\e
               \label{eq_liminf}
        \end{align}
        for Lebesgue-a.e.\/ $t\in\R$, where ${}^*$ denotes the adjoint, then the diffraction
        spectrum of\/ $\mathcal{T}^{(1)}$ has zero absolutely
        continuous part.}
\end{thm}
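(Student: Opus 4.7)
My plan is to adapt Ma\~{n}ibo's renormalization strategy \cite{Manibo_binary,Manibo_thesis} for the single substitution case to this S-adic setting. The first step is to establish a matrix-valued renormalization equation relating the autocorrelations of $\mathcal{T}^{(k)}$ and $\mathcal{T}^{(k+1)}$. Writing $\omega_i^{(k)}=\sum_{t:\,T_i+t\in\mathcal{T}^{(k)}}\delta_t$ and $\gamma_{ij}^{(k)}=\omega_i^{(k)}\circledast\widetilde{\omega_j^{(k)}}$, the identity $\rho_{i_k}(\mathcal{T}^{(k+1)})=\mathcal{T}^{(k)}$ gives $\omega_i^{(k)}=\sum_j(\sum_{x\in T_{i,j}^{(i_k)}}\delta_x)*\phi_{i_k,*}\omega_j^{(k+1)}$, which together with $(\phi_*\nu)\circledast\widetilde{(\phi_*\nu')}=|\det\phi|^{-1}\phi_*(\nu\circledast\widetilde{\nu'})$ produces a matrix-convolution renormalization of $\Gamma^{(k)}=(\gamma_{ij}^{(k)})_{i,j}$. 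After Fourier transformation and restriction to the absolutely continuous part (justified by uniqueness of the Lebesgue decomposition and invariance of absolute continuity under linear pullback), this becomes a pointwise density relation
\begin{align*}
f^{(k)}(t)=\frac{1}{|\det\phi_{i_k}|}\,B^{(i_k)}(t)\,f^{(k+1)}(\phi_{i_k}^{*}(t))\,B^{(i_k)}(t)^{*},
\end{align*}
where $f^{(k)}$ is the Radon--Nikodym density of $\widehat{\Gamma}^{(k)}_{\mathrm{ac}}$. Iterating yields $f^{(1)}(t)=d_k^{-1}M_k(t)\,f^{(k+1)}(\Phi_k^{*}(t))\,M_k(t)^{*}$ with $\Phi_k^{*}=\phi_{i_k}^{*}\circ\cdots\circ\phi_{i_1}^{*}$, $d_k=|\det\Phi_k|$, and $M_k$ exactly the matrix product appearing in the hypothesis.

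Taking traces and applying $\Tr(MAM^{*})\leq\|M\|^{2}\Tr(A)$ for positive semidefinite $A$ yields
\begin{align*}
\Tr(f^{(1)}(t))\leq\frac{\|M_k(t)\|^{2}}{d_k}\,\Tr\bigl(f^{(k+1)}(\Phi_k^{*}(t))\bigr).
\end{align*}
The hypothesis \eqref{eq_liminf} rearranges to $\|M_k(t)\|^{2}/d_k<e^{-2k\e}$ for all $k$ beyond a $t$-dependent threshold, for Lebesgue-a.e.\ $t$. Setting $E_n=\{t\in\R^d:\|M_k(t)\|^{2}/d_k<e^{-2k\e}\text{ for all }k\geq n\}$, the nested family $(E_n)_n$ exhausts $\R^d$ up to a null set, and the exponential bound holds \emph{uniformly} in $t\in E_n$ for every $k\geq n$.

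To conclude, I fix a bounded $K\subset\R^d$, choose $n$ with $\mu_{\mathrm{L}}(K\setminus E_n)$ small, integrate the trace inequality over $E_n\cap K$, perform the change of variables $s=\Phi_k^{*}(t)$ (Jacobian $d_k$), and use that $\widehat{\Gamma}^{(k+1)}$ is translation-bounded uniformly in $k$ to bound $\int_E\Tr(f^{(k+1)})\,ds\leq C|E|$ for large $|E|$ with $C$ independent of $k$, giving
\begin{align*}
\int_{E_n\cap K}\Tr(f^{(1)})\,dt\leq e^{-2k\e}\cdot\frac{1}{d_k}\int_{\Phi_k^{*}(E_n\cap K)}\Tr(f^{(k+1)})\,ds\leq C\,e^{-2k\e}\,\mu_{\mathrm{L}}(E_n\cap K),
\end{align*}
which tends to $0$ as $k\to\infty$. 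Hence $f^{(1)}=0$ a.e.\ on $E_n\cap K$; since $\bigcup_n E_n$ has full measure and $K$ is arbitrary, $f^{(1)}\equiv 0$, so the absolutely continuous part of $\widehat{\Gamma}^{(1)}$ vanishes. The main obstacle I anticipate is establishing the $k$-independent translation-boundedness of $\widehat{\Gamma}^{(k)}$, which should follow from FLC of $\{\rho_1,\ldots,\rho_{m_a}\}$ giving uniform tile-density and patch-frequency bounds across all levels $k$, together with the standard fact that Fourier transforms of positive definite translation-bounded measures are translation-bounded; a secondary point is the careful matrix-measure bookkeeping when deriving the pointwise density relation from the measure-valued renormalization.
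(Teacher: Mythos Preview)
Your overall strategy coincides with the paper's: establish the matrix renormalization for the pair correlations, pass to Fourier transforms, extract the absolutely continuous part to get the pointwise identity $H^{(1)}(t)=(\lambda^{(k)})^{-1}\overline{B}^{(k)}(t)\,H^{(k+1)}(\phi_{(k)}^{*}t)\,\overline{B}^{(k)}(t)^{*}$, take traces, and combine the exponential decay of $\|\overline{B}^{(k)}(t)\|^{2}/\lambda^{(k)}$ on the exhausting sets $E_n$ with a $k$-uniform bound on the remaining trace integral. The split into $E_n$ and its complement, and the use of positive semidefiniteness of $H^{(k)}$, are exactly as in the paper.

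The one place your outline diverges from the paper is the $k$-uniform bound, which you correctly flag as the main obstacle. You propose a change of variables $s=\Phi_k^{*}(t)$ followed by uniform translation-boundedness of $\widehat{\Gamma}^{(k+1)}$ to get $\int_E\Tr f^{(k+1)}\leq C|E|$. This step is delicate: translation-boundedness controls integrals over unit cubes, so you would need the number of unit cubes meeting $\Phi_k^{*}(K)$ to be $O(d_k)$, and for general expansion maps (only assumed to satisfy $\min_{\|v\|=1}\|\phi_i v\|>1$) the parallelepiped $\Phi_k^{*}(K)$ can be highly anisotropic, making that covering count hard to justify. The paper sidesteps this geometry entirely (Lemma~\ref{lem_existence_A}): it bounds $\int_{[-R,R]^d}\Tr H^{(k)}(\phi_{(k-1)}^{*}t)\,dt$ \emph{before} any change of variables, by tiling $[-R,R]^d$ with unit cubes $[0,1]^d+v$ and estimating each $\bigl\langle 1_{\phi_{(k-1)}^{*}([0,1]^d+v)},\widehat{\Upsilon^{(k)}_{i,i}}\bigr\rangle$ via a test function $f*\tilde f$ with $\widecheck{f*\tilde f}\geq 1_{[0,1]^d}$; positive-definiteness then gives the bound $\det\phi_{(k-1)}\cdot f*\tilde f(0)\cdot\dens D^{(k)}_i$, and the densities are uniformly bounded because the alphabet is fixed. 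This produces directly $\int_{[-R,R]^d}\Tr H^{(k)}(\phi_{(k-1)}^{*}t)\,dt\leq AR^d$ with $A$ independent of $k$, which is precisely the input needed in your final inequality without any covering of $\Phi_k^{*}(K)$. If you adopt this argument for the obstacle you identified, your proof goes through and is essentially the paper's.
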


\begin{rem}
      There is a similar result \textcolor{red}{for the one-dimensional case} for the corresponding dynamical
      spectrum in a recent paper by Bufetov and Solomyak \cite[Corollary
        4.5]{Bufetov-Solomyak_singular}.  
        They proved a sufficient condition for
        the absence of absolutely continuous dynamical spectrum for
        suspension flows for S-adic sequences.
        This covers some cases which Theorem \ref{thm_liminf_implies_zero_acpart}  does not
        cover, but Theorem \ref{thm_liminf_implies_zero_acpart}  deals with some cases
        which Bufetov and Solomyak did not.
        The sufficient condition in  \cite{Bufetov-Solomyak_singular} is similar to Theorem 
        \ref{thm_liminf_implies_zero_acpart},
        but they replace $\frac{1}{2k}\log\lambda_{i_{1}}\lambda_{i_{2}}\cdots\lambda_{i_{k}}$
        with $\frac{1}{2k}\log\|A_{i_{1}}A_{i_{2}}\cdots A_{i_{k}}\|$ 
        ($A_{i}$ is the substitution matrix for $\rho_{i}$)
        and assume that the limit of these
        as $k\rightarrow\infty$ is convergent. Moreover, they assume the recognizability for the
        directive sequence $i_{1},i_{2},\ldots$.
        Therefore, Theorem
      \ref{thm_liminf_implies_zero_acpart} covers some cases that
      Bufetov and Solomyak did not cover, since \textcolor{red}{
      the dimension $d$ of the tiling is arbitrary and} the directive sequence is arbitrary in 
      this theorem.
      On the other hand, Bufetov and Solomyak deal with arbitrary suspension flows for
      S-adic sequences, whereas in this paper, \textcolor{red}{for the one-dimensional cases,} we only deal with tile lengths that come from
      \textcolor{red}{Perron--Frobenius} eigenvector.
\end{rem}

\textcolor{red}{
For the rest of this section we will prove Theorem \ref{thm_liminf_implies_zero_acpart}. 
The readers may skip the proof for the first reading and
move to an application of this theorem in Section \ref{subsection_absence_acpart}.}

\begin{defi}
    Let\/ $D^{(k)}_{i}$ be defined via
    \begin{align*}
           D^{(k)}_i=\{t\in\R^d\mid T_i+t\in\mathcal{T}^{(k)}\}.
    \end{align*}
 The density of
each $D^{(k)}_{i}$ is defined via
\begin{align*}
    \dens D^{(k)}_{i}=\lim_{R\rightarrow\infty}\frac{1}{\mu_{\mathrm{L}}(B_R)}\card D^{(k)}_{i}\cap B_R,
\end{align*}
where the limit is convergent since the patch frequencies converge.

For each\/ $k=1,2,\cdots$, each $i,j=1,2,\ldots, n_{a}$ and each\/
$z\in\R^d$, we set
\begin{align*}
    \nu^{(k)}_{i,j}(z)&=\freq_{\mathcal{T}^{(k)}} \{T_{i}, T_{j}+z\}\\
                    &=\lim_{R\rightarrow\infty}\frac{1}{\mu_{\mathrm{L}}(B_R)}
                      \card\bigl\{t\in B_R\mid T_{i}+t, T_{j}+t+z\in\mathcal{T}^{(k)}\bigr\}.
\end{align*}
\end{defi}

\begin{lem}\label{lem_for_renormalization}
       For each\/ $k$, we have the following equation:
       \begin{align}
              \frac{1}{\det\phi_{i_{k}}}\sum_{m,n=1}^{n_{a}}
              \sum_{x\in T^{(i_{k})}_{i,m}}\sum_{y\in T^{(i_{k})}_{j,n}}
              \nu_{m,n}^{(k+1)}\bigl(\phi_{i_k}^{-1}(z+x-y)\bigr)=\nu^{(k)}_{i,j}(z).
       \end{align}
\end{lem}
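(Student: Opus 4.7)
The plan is to use the identity $\rho_{i_k}(\mathcal{T}^{(k+1)})=\mathcal{T}^{(k)}$ and transfer the counting that defines $\nu^{(k)}_{i,j}(z)$ from $\mathcal{T}^{(k)}$ back to $\mathcal{T}^{(k+1)}$, where the digit-set factors and the Jacobian $|\det\phi_{i_k}|$ will enter.

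First I would set up a bijective correspondence at the single-tile level. Every $T_i+t\in\mathcal{T}^{(k)}$ arises uniquely as $T_i+\phi_{i_k}(s)+x$ from a triple $(m,s,x)$ with $T_m+s\in\mathcal{T}^{(k+1)}$ and $x\in T^{(i_k)}_{i,m}$: uniqueness of $(m,s)$ follows because the patches $\rho_{i_k}(T_m+s)$ as $T_m+s$ ranges over $\mathcal{T}^{(k+1)}$ have mutually interior-disjoint supports $\phi_{i_k}(\supp T_m)+\phi_{i_k}(s)$, and within a single such patch, different digits $x\in T^{(i_k)}_{i,m}$ parametrize distinct tiles. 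Applying the correspondence to both members of a pair, a pair $(T_i+t,\,T_j+t+z)\subset\mathcal{T}^{(k)}$ corresponds bijectively to data $(m,s,x,n,s',y)$ with $T_m+s,T_n+s'\in\mathcal{T}^{(k+1)}$, $x\in T^{(i_k)}_{i,m}$, $y\in T^{(i_k)}_{j,n}$, $t=\phi_{i_k}(s)+x$, and $t+z=\phi_{i_k}(s')+y$. Subtracting the last two equations gives $\phi_{i_k}(s'-s)=z+x-y$, i.e.\ $s'=s+\phi_{i_k}^{-1}(z+x-y)$.

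Next I would translate this correspondence into an equality of densities. For fixed $m,n,x,y$, counting $t\in B_R$ with both tiles in $\mathcal{T}^{(k)}$ amounts to counting $s$ with $\phi_{i_k}(s)\in B_R-x$ and $T_m+s,\,T_n+s+\phi_{i_k}^{-1}(z+x-y)\in\mathcal{T}^{(k+1)}$. The region $\phi_{i_k}^{-1}(B_R-x)$ has Lebesgue measure $\mu_{\mathrm{L}}(B_R)/|\det\phi_{i_k}|$, so, dividing by $\mu_{\mathrm{L}}(B_R)$ and using the uniform patch-frequency convergence on $\mathcal{T}^{(k+1)}$ (Theorem~\ref{thm_S-adic_patch-freq}), the corresponding partial count converges to
\begin{align*}
\frac{1}{|\det\phi_{i_k}|}\,\nu^{(k+1)}_{m,n}\!\bigl(\phi_{i_k}^{-1}(z+x-y)\bigr).
\end{align*}
Since the correspondence is bijective, summing over the finitely many choices of $m,n,x,y$ gives the stated formula (with the convention $\det\phi_{i_k}=|\det\phi_{i_k}|$, consistent with the rest of the paper).

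The main obstacle I anticipate is a clean handling of the boundary error: $t\in B_R$ is not quite equivalent to $s\in\phi_{i_k}^{-1}(B_R)$, since the shifts by the digits $x$ and by $\phi_{i_k}^{-1}(z+x-y)$ push some points in and out. Because the digit sets $T^{(i_k)}_{i,m}$ are finite and $z$ is fixed, these discrepancies are confined to an $O(1)$-neighborhood of $\partial(\phi_{i_k}^{-1}(B_R))$; the van Hove property of $(\phi_{i_k}^{-1}(B_R))_R$ (a consequence of Setting~\ref{setting_relation_exponents_absconti-spec} together with the van Hove assumption in Theorem~\ref{thm_S-adic_patch-freq}) combined with the FLC bound on the number of tiles per unit volume shows that this boundary contribution is $o(\mu_{\mathrm{L}}(B_R))$ and disappears in the limit. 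Uniformity of the frequency convergence over the orbit of $\mathcal{T}^{(k+1)}$ is what allows us to apply the one-tile frequency statement to each translated copy appearing after the shift by $\phi_{i_k}^{-1}(z+x-y)$.
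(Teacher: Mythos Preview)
Your proposal is correct and follows essentially the same route as the paper: establish the bijection between pairs $(T_i+t,T_j+t+z)$ in $\mathcal{T}^{(k)}$ and data $(m,n,s,x,y)$ in $\mathcal{T}^{(k+1)}$ via $\rho_{i_k}(\mathcal{T}^{(k+1)})=\mathcal{T}^{(k)}$, derive $s'-s=\phi_{i_k}^{-1}(z+x-y)$, and then pass to densities while controlling the boundary discrepancy by a van Hove/FLC argument. The paper writes the boundary control as an explicit sandwich between counts over $\phi_{i_k}^{-1}(B_{R-C'})$ and $\phi_{i_k}^{-1}(B_R)+B_C$, but the content is the same as what you describe.
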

\begin{proof}
    The proof is essentially the same as the substitution case (that
    is, the case where $m_{a}=1$) in \cite{Manibo_thesis}, but since
    the proof in \cite{Manibo_thesis} appears to use recognizability,
    we here give an outline of the proof without using
    recognizability.
    
    Take $i,j,k$ and $z\in D^{(k)}_{j}-D^{(k)}_{i}$. For each $t\in
    D^{(k)}_{i}$ such that $t+z\in D^{(k)}_{j}$, since
    $T_{i}+t\in\mathcal{T}^{(k)}$ and $T_{j}+t+z\in\mathcal{T}^{(k)}$,
    there are $m,n\in\{1,2,\ldots, n_{a}\}$, $s_{m}\in D^{(k+1)}_{m}$
    and $s_{n}\in D^{(k+1)}_{n}$ such that
    $T_{i}+t\in\rho_{i_{k}}(T_{m}+s_{m})$ and
    $T_{j}+t+z\in\rho_{i_{k}}(T_{n}+s_{n})$.  (These $m,n,s_{m},s_{n}$
    are unique.)  By computation, it follows that
    \begin{align*}
          s_{n}-s_{m}=\phi_{i_k}^{-1}(z+x-y)
    \end{align*}
    for some $x\in T^{(i_{k})}_{i,m}$ and $y\in T^{(i_{k})}_{j,n}$, and so we obtain a map that sends
    \begin{align}
      t\in\bigl\{t\in\R^d\mid T_{i}+t, T_{j}+t+z\in\mathcal{T}^{(k)}\bigr\}
      \label{eq1_renormalization}
    \end{align}
    to
    \begin{align}
           s_{m}\in\!\bigsqcup_{m,n\leqq n_{a}}\bigsqcup_{x\in T^{(i_{k})}_{i,m}}
           \bigsqcup_{y\in T^{(i_{k})}_{j,m}}\bigl\{s\in\R^d\mid T_{m}+s\in\mathcal{T}^{(k+1)},
           T_{n}+s+\phi_{i_k}^{-1}(z+x-y)\in\mathcal{T}^{(k+1)}\bigr\},
           \label{eq2_renormalization}
    \end{align}
    where $\bigsqcup$ means taking the union while regarding the sets as disjoint.
    We can show that this map is a bijection.
    
   We then see that
   \begin{align*}
         & \sum_{m,n\leqq n_{a}}\sum_{x\in T^{(i_{k})}_{i,m}}
     \sum_{y\in T^{(i_{k})}_{j,m}}\card\bigl\{s\in \phi_{i_k}^{-1}(B_{R-C'})\mid 
                      T_{m}+s\in\mathcal{T}^{(k+1)},
                      T_{n}+s+\phi_{i_k}^{-1}(x-y+z)\in\mathcal{T}^{(k+1)}\bigr\}\\
                    &  \leqq \card\bigl\{t\in B_{R}\mid T_{i}+t, T_{j}+t+z\in\mathcal{T}^{(k)}\bigr\}\\
                     & \leqq \sum_{m,n\leqq n_{a}}\sum_{x\in T^{(i_{k})}_{i,m}}
                      \sum_{y\in T^{(i_{k})}_{j,m}}\card\bigl\{s\in \phi_{i_k}^{-1}(B_R)+B_C\mid 
                      T_{m}+s\in\mathcal{T}^{(k+1)},
                      T_{n}+s+\phi_{i_k}^{-1}(z+x-y)\in\mathcal{T}^{(k+1)}\bigr\},
   \end{align*}
   where  $C'$ is the maximal norm of vectors in all digits for $\rho_1,\rho_2,\ldots, \rho_{m_a}$ and
   $C$ is the maximal diameter for the elements of the alphabet.
   By dividing these by $\mu_{\mathrm{L}}(B_R)$ and
   taking the limit as $R\rightarrow\infty$, we obtain the desired
   equation.
\end{proof}

\begin{defi}
      For each $k=1,2,\cdots$, $i,j=1,2,\ldots, n_{a}$, we set
      \begin{align*}
           \Upsilon^{(k)}_{i,j}=\sum_{z\in D^{(k)}_{j}-D^{(k)}_{i}}\nu_{i,j}^{(k)}(z)\,\delta_{z} .
      \end{align*}
      \textcolor{red}{By FLC}, for each compact set $K$
     the set $K\cap(D^{(k)}_{j}-D^{(k)}_{i})$ is a finite set, and so the infinite sum
     $\sum_{z\in D^{(k)}_{j}-D^{(k)}_{i}}$ (with respect to the vague topology)
      is well-defined.
\end{defi}

Note that, by equation \eqref{autoco_measure_for_tiling}, it suffices
to investigate these $\Upsilon^{(k)}_{i,j}$ in order to understand
the diffraction measure for $\mathcal{T}^{(k)}$.
The Fourier transforms of the $\Upsilon^{(k)}_{i,j}$ will generate the
diffraction measures for $\mathcal{T}^{(k)}$. In order to investigate
the nature of these diffraction measures, it is useful to use the
relation between $\Upsilon^{(k)}_{i,j}$ ($i,j=1,2,\ldots, n_{a}$) and
$\Upsilon^{(k+1)}_{m,n}$ ($m,n=1,2,\ldots, n_{a}$) stated in
Proposition \ref{prop_renormalization} below.  In order to give a
statement, we first introduce two symbols.

\begin{defi}
      If a Radon measure $\mu$ on $\R^d$ and a homeomorphism $g\colon\R^d\rightarrow\R^d$ are given, 
      define another Radon measure $g.\mu$ via
      \begin{align*}
            \langle g.\mu,\varphi\rangle=\langle\mu, \varphi\circ g\rangle
      \end{align*}
      for each $\varphi\in C_{c}(\R^d)$.
      
      Let $\mu$ be a Radon measure on $\R^d$ and let $\xi$ be a continuous function on $\R^d$.
      The new Radon measure $\xi\mu$ is defined via
      \begin{align*}
            \langle \xi\mu,\varphi\rangle=\langle\mu, \xi\varphi\rangle
      \end{align*}
      for each $\varphi\in C_{c}(\R^d)$, where $\xi\varphi$ is the pointwise multiplication.
\end{defi}

In what follows, we use \emph{convolutions} of Radon measures
\cite[Definition 4.9.18]{Baake-Grimm-vol2}.

\begin{prop}[Renormalization equation]\label{prop_renormalization}
      For each\/ $k,i,j$,
      we have
      \begin{align}
            \Upsilon^{(k)}_{i,j}=
            \frac{1}{\det\phi_{i_k}}\sum_{m,n=1}^{n_{a}}
            \sum_{x\in T^{(i_{k})}_{i,m}}\sum_{y\in T^{(i_{k})}_{j,n}}
            \delta_{y-x}*\bigl(\phi_{i_{k}}.\Upsilon^{(k+1)}_{m,n}\bigr).
      \end{align}
\end{prop}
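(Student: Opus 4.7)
The plan is to derive the renormalization equation as a direct reformulation of Lemma~\ref{lem_for_renormalization} by unpacking the operations $\phi_{i_k}.(\cdot)$ and $\delta_{y-x}*(\cdot)$ on the atomic measure $\Upsilon^{(k+1)}_{m,n}$ and then matching coefficients on both sides.

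First I would compute the right-hand side atom by atom. Since $\langle g.\mu,\varphi\rangle=\langle\mu,\varphi\circ g\rangle$, applying this to a Dirac gives $g.\delta_a=\delta_{g(a)}$, so
\begin{align*}
      \phi_{i_k}.\Upsilon^{(k+1)}_{m,n}
      =\sum_{w\in D^{(k+1)}_n-D^{(k+1)}_m}\nu^{(k+1)}_{m,n}(w)\,\delta_{\phi_{i_k}(w)}.
\end{align*}
Convolving with $\delta_{y-x}$ translates the supports by $y-x$, yielding
\begin{align*}
      \delta_{y-x}*\bigl(\phi_{i_k}.\Upsilon^{(k+1)}_{m,n}\bigr)
      =\sum_{w}\nu^{(k+1)}_{m,n}(w)\,\delta_{y-x+\phi_{i_k}(w)}.
\end{align*}

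Next, I would reindex by $z=y-x+\phi_{i_k}(w)$, equivalently $w=\phi_{i_k}^{-1}(z+x-y)$, and collect terms according to the atom location $z$. This turns the right-hand side of the proposition into
\begin{align*}
      \sum_{z}\left(\frac{1}{\det\phi_{i_k}}\sum_{m,n=1}^{n_a}\sum_{x\in T^{(i_k)}_{i,m}}\sum_{y\in T^{(i_k)}_{j,n}}\nu^{(k+1)}_{m,n}\bigl(\phi_{i_k}^{-1}(z+x-y)\bigr)\right)\delta_{z}.
\end{align*}
Lemma~\ref{lem_for_renormalization} identifies the bracketed expression with $\nu^{(k)}_{i,j}(z)$, so the whole sum equals $\sum_{z}\nu^{(k)}_{i,j}(z)\,\delta_z=\Upsilon^{(k)}_{i,j}$, as required.

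The only subtle point, and hence the main thing to be careful about, is the validity of the termwise manipulations in the vague topology. Both the inner sum over $w$ and the outer sum over $z$ are formally infinite, but FLC of the family $\{\rho_1,\ldots,\rho_{m_a}\}$ forces each difference set $D^{(k)}_j-D^{(k)}_i$ (and $D^{(k+1)}_n-D^{(k+1)}_m$) to be locally finite. Consequently, for every test function $\varphi\in C_c(\R^d)$ only finitely many atoms contribute on either side, so the reordering of the summations and the change of variable $w\leftrightarrow z$ are legitimate, and the equality of measures follows from equality of the coefficients at each atom.
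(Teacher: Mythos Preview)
Your argument is correct and is precisely the ``direct computation using Lemma~\ref{lem_for_renormalization}'' that the paper invokes; you have simply spelled out the details of that computation, including the termwise justification via FLC that the paper leaves implicit.
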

\begin{proof}
    This can be proved using Lemma \ref{lem_for_renormalization} by a direct computation.
\end{proof}

This renormalization equation \textcolor{red}{gives} rise to an equation between Fourier
transforms, \textcolor{red}{as in Proposition \ref{prop_renormalization_in_dual}.}
In what follows, for each $t\in\R^d$, the symbol $\exp_{t}$ denotes the exponential function defined via
$\exp_{t}(s)=e^{2\pi \im \langle s,t\rangle}$ for each $s\in\R^d$, where
$\langle\cdot,\cdot\rangle$ is the standard inner product.

\begin{prop}\label{prop_renormalization_in_dual}
     Each\/ $\Upsilon^{(k)}_{i,j}$ is Fourier transformable and
          we have
     \begin{align}
           \widehat{\Upsilon^{(k)}_{i,j}}
           =\frac{1}{(\det\phi_{i_k})^2}
             \sum_{m,n=1}^{n_{a}}\sum_{x\in T^{(i_{k})}_{i,n}}\sum_{y\in T^{(i_{k})}_{j,n}}
             \exp_{x-y}(\phi^*_{i_{k}})^{-1}.\widehat{\Upsilon^{(k+1)}_{m,n}}.
             \label{eq_renormalization_fourier-dual}
     \end{align}
              where\/ ${}^{*}$ denotes the adjoint map.
\end{prop}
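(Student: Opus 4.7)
My plan is to push the identity of Proposition \ref{prop_renormalization} through the Fourier transform. Three ingredients are required: (i) Fourier transformability of each $\Upsilon^{(k)}_{i,j}$; (ii) the translation rule $\widehat{\delta_{b}*\nu} = \exp_{-b}\cdot\hat\nu$; and (iii) the pushforward rule $\widehat{\phi.\nu} = \frac{1}{|\det\phi|}(\phi^*)^{-1}.\hat\nu$ for an invertible linear map $\phi$. Granted these, applying the Fourier transform to each summand of Proposition \ref{prop_renormalization} produces one factor of $\frac{1}{|\det\phi_{i_k}|}$ from the pushforward on top of the $\frac{1}{\det\phi_{i_k}}$ already present, recovering the stated prefactor $\frac{1}{(\det\phi_{i_k})^2}$ (the determinant is positive because $\phi_{i_k}$ is an expansion). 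The translation by $y-x$ contributes $\exp_{-(y-x)} = \exp_{x-y}$ under the paper's convention $\exp_t(s) = e^{2\pi\im\langle s,t\rangle}$.

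To establish transformability, I would use polarization. For any weights $w = (w_1,\ldots,w_{n_a})\in\C^{n_a}$, the measure
\[
M^{(k)}_w = \sum_{i,j=1}^{n_a} w_i\overline{w_j}\,\Upsilon^{(k)}_{i,j}
\]
coincides, by \eqref{autoco_measure_for_tiling}, with the autocorrelation of the weighted Dirac comb $\sum_i w_i \sum_{t\in D^{(k)}_i}\delta_t$; as such it is positive definite and thus Fourier transformable by the criterion cited after \eqref{Lebesgue_dec_for_mu}. Choosing $w = e_{i_0}$ yields $\Upsilon^{(k)}_{i_0,i_0} = M^{(k)}_w$ directly, and for $i_0\neq j_0$, computing $M^{(k)}_w$ for $w = e_{i_0}+\alpha e_{j_0}$ with $\alpha\in\{1,-1,\im,-\im\}$ and forming a suitable $\C$-linear combination isolates $\Upsilon^{(k)}_{i_0,j_0}$. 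Hence each $\Upsilon^{(k)}_{i,j}$ is a finite $\C$-linear combination of Fourier transformable measures, and is itself Fourier transformable by linearity.

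With transformability in hand, I would apply the Fourier transform termwise. The convolution formula yields $\widehat{\delta_{y-x}*(\phi_{i_k}.\Upsilon^{(k+1)}_{m,n})} = \exp_{x-y}\cdot\widehat{\phi_{i_k}.\Upsilon^{(k+1)}_{m,n}}$, and unwinding the pushforward rule on test functions via the paper's defining identity $\langle\varphi*\psi,\nu\rangle = \langle\widecheck{\varphi*\psi},\hat\nu\rangle$ together with the change-of-variables identity $\widecheck\varphi\circ\phi = \frac{1}{|\det\phi|}\widecheck{\varphi\circ(\phi^*)^{-1}}$ gives $\widehat{\phi_{i_k}.\Upsilon^{(k+1)}_{m,n}} = \frac{1}{|\det\phi_{i_k}|}(\phi^*_{i_k})^{-1}.\widehat{\Upsilon^{(k+1)}_{m,n}}$. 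Substituting these into the Fourier transform of Proposition \ref{prop_renormalization} reproduces \eqref{eq_renormalization_fourier-dual} exactly.

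The main technical point I expect to be delicate is the rigorous derivation of the pushforward rule at the level of (possibly translation-unbounded) Radon measures, since it is only formal when $\hat\nu$ is treated as a density; however, once the change-of-variables identity for $\widecheck\varphi\circ\phi$ is plugged into the paper's defining relation for the Fourier transform, everything reduces to bookkeeping with test functions $\varphi\in C_c(\R^d)$, and no further regularity of $\hat\nu$ is required.
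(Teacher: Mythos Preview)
Your proposal is correct and follows essentially the same approach as the paper. The paper establishes Fourier transformability by writing $\Upsilon^{(k)}_{i,j}=\omega_i\circledast\widetilde{\omega_j}$ (with $\omega_i=\sum_{x\in D^{(k)}_i}\delta_x$) and invoking the polarization identity to express it as a sum of four positive definite measures---this is the same polarization argument you give, just packaged slightly differently---and then declares the formula \eqref{eq_renormalization_fourier-dual} a ``direct consequence'' of Proposition~\ref{prop_renormalization}, whereas you spell out the translation and pushforward rules explicitly.
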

\begin{proof}
       To prove that each $\Upsilon^{(k)}_{i,j}$ is Fourier transformable, fix $k$ and
       set $\omega_{i}=\sum_{x\in D^{(k)}_{i}}\delta_{x}$ for each
       $i=1,2,\ldots, n_{a}$.
       By computation we have
       \begin{align*}
              \Upsilon^{(k)}_{i,j}&=\omega_{i}\circledast\widetilde{\omega_{j}},
       \end{align*}
       which is the sum of four positive definite Radon measures by the
       polarization identity.  Since each positive definite Radon
       measure is Fourier transformable, the sum $\Upsilon^{(k)}_{i,j}$
       is also Fourier transformable.  The formula
       \eqref{eq_renormalization_fourier-dual} is a direct consequence
       of Proposition \ref{prop_renormalization}.
\end{proof}

We can decompose each $\Upsilon^{(k)}_{i,j}$ into its pure point,
absolutely continuous and singular continuous component (with respect to the
Lebesgue measure, see page \pageref{explanation_Lebesgue_dec}):
\begin{align*}
      \Upsilon_{i,j}^{(k)}=(\Upsilon^{(k)}_{i,j})_\text{pp}+(\Upsilon^{(k)}_{i,j})_\text{ac}+(\Upsilon^{(k)}_{i,j})_\text{sc}.
\end{align*}
The Radon--Nikodym derivative of the absolutely continuous part
$(\Upsilon^{(k)}_{i,j})_\text{ac}$ is denoted by $h^{(k)}_{i,j}\in
L^{1}_{\text{loc}}(\R^d)$.

By the uniqueness of the Lebesgue decomposition and by Proposition
\ref{prop_renormalization_in_dual}, we have the following result.

\begin{prop}\label{prop_renormalization_RNderivative}
  The Radon--Nikodym derivative\/ $h^{(k)}_{i,j}$ satisfies
  \begin{align}
      h^{(k)}_{i,j}(t)=\frac{1}{\det\phi_{i_k}}
                     \sum_{m,n=1}^{n_{a}}\sum_{x\in T^{(i_{k})}_{i,m}}\sum_{y\in T^{(i_{k})}_{j,n}}
                           \exp_{x-y}(t)\,h^{(k+1)}_{m,n}(\phi^*_{i_{k}}(t))
     \end{align}
     for Lebesgue-a.e.\/ $t\in\R^d$.
     \end{prop}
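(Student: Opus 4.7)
The strategy is to extract the absolutely continuous component from the identity in Proposition \ref{prop_renormalization_in_dual} and use the uniqueness of the Lebesgue decomposition to identify Radon--Nikodym derivatives. First I would verify that the operations on Radon measures appearing on the right-hand side of \eqref{eq_renormalization_fourier-dual} preserve absolute continuity with respect to $\mu_{\mathrm{L}}$: (a) the pushforward $g.\mu$ by the invertible linear homeomorphism $g=(\phi^*_{i_k})^{-1}$ preserves the Lebesgue decomposition, since $g_*\mu_{\mathrm{L}}$ is a positive multiple of $\mu_{\mathrm{L}}$; (b) multiplication by the continuous (in particular locally bounded) function $\exp_{x-y}$ preserves absolute continuity with respect to $\mu_{\mathrm{L}}$; (c) finite sums of absolutely continuous measures are absolutely continuous. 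Combined, this means the three parts (pp, sc, ac) on the right-hand side of \eqref{eq_renormalization_fourier-dual} are separately the pp, sc, ac parts obtained by replacing each $\widehat{\Upsilon^{(k+1)}_{m,n}}$ by its respective part.

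Next I would compute the Radon--Nikodym derivative of $(\phi^*_{i_k})^{-1}.\bigl((\widehat{\Upsilon^{(k+1)}_{m,n}})_{\mathrm{ac}}\bigr)$ by the change-of-variables formula. For $\varphi\in C_c(\R^d)$, setting $s=(\phi^*_{i_k})^{-1}(t)$,
\begin{align*}
\left\langle (\phi^*_{i_k})^{-1}.(\widehat{\Upsilon^{(k+1)}_{m,n}})_{\mathrm{ac}},\varphi\right\rangle
&=\int_{\R^d}\varphi\bigl((\phi^*_{i_k})^{-1}(t)\bigr)\,h^{(k+1)}_{m,n}(t)\,d\mu_{\mathrm{L}}(t)\\
&=\det\phi_{i_k}\int_{\R^d}\varphi(s)\,h^{(k+1)}_{m,n}\bigl(\phi^*_{i_k}(s)\bigr)\,d\mu_{\mathrm{L}}(s),
\end{align*}
so the Radon--Nikodym derivative is $(\det\phi_{i_k})\,h^{(k+1)}_{m,n}\circ\phi^*_{i_k}$. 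Multiplying by the continuous function $\exp_{x-y}$ then multiplies the derivative pointwise by $\exp_{x-y}(t)=e^{2\pi\im\langle x-y,t\rangle}$.

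Now I would take the absolutely continuous parts of both sides of \eqref{eq_renormalization_fourier-dual}, which by (a)--(c) yields
\begin{align*}
(\widehat{\Upsilon^{(k)}_{i,j}})_{\mathrm{ac}}
=\frac{1}{(\det\phi_{i_k})^2}\sum_{m,n=1}^{n_a}\sum_{x\in T^{(i_k)}_{i,m}}\sum_{y\in T^{(i_k)}_{j,n}}\exp_{x-y}\,(\phi^*_{i_k})^{-1}.(\widehat{\Upsilon^{(k+1)}_{m,n}})_{\mathrm{ac}}.
\end{align*}
Reading off Radon--Nikodym derivatives on both sides using the previous paragraph, the two prefactors $(\det\phi_{i_k})^{-2}$ and $\det\phi_{i_k}$ combine to $(\det\phi_{i_k})^{-1}$, giving the claimed equation $\mu_{\mathrm{L}}$-a.e. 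Uniqueness of the Lebesgue decomposition (and of the Radon--Nikodym derivative up to $\mu_{\mathrm{L}}$-null sets) guarantees the a.e.\ identification.

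The only subtlety I anticipate is bookkeeping: making sure the various definitions of $g.\mu$ and $\xi\mu$ compose in the order the formula prescribes, and that the determinant factor arising from the change of variables for the $n_a\times n_a$-indexed sum is handled uniformly. All other steps are essentially formal consequences of standard properties of the Lebesgue decomposition and of linear pushforwards.
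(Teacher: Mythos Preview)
Your proposal is correct and follows exactly the approach the paper intends: the paper's proof is the single sentence ``By the uniqueness of the Lebesgue decomposition and by Proposition~\ref{prop_renormalization_in_dual}, we have the following result,'' and your argument is a faithful unpacking of that sentence (checking that $(\phi^*_{i_k})^{-1}.\,(-)$, multiplication by $\exp_{x-y}$, and finite sums each preserve the pp/sc/ac types, and then reading off densities via the linear change of variables to turn $(\det\phi_{i_k})^{-2}\cdot\det\phi_{i_k}$ into $(\det\phi_{i_k})^{-1}$). No gap.
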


\textcolor{red}{We will investigate when $h^{(1)}_{i,j}$ are zero and so the absolutely continuous spectrum
for $\mathcal{T}^{(1)}$ is zero.}
It is convenient to define the following Radon--Nikodym matrix.

\begin{defi}\label{def_R-Nmatrix}
      For each $k$, define the \emph{Radon-Nikodym matrix} $H^{(k)}$
      for the tiling $\mathcal{T}^{(k)}$, which is a matrix-valued
      function on $\R^d$, via
      \begin{align*}
              H^{(k)}(t)=(h^{(k)}_{i,j}(t))_{i,j}.
      \end{align*}
\end{defi}

Then, Proposition \ref{prop_renormalization_RNderivative} now translates
into the following result.
\begin{prop}
          For each\/ $k$, we have
          \begin{align}
                 H^{(k)}(t)=\frac{1}{\det\phi_{i_{k}}}
                                      B^{(i_{k})}(t)\,H^{(k+1)}(\phi^*_{i_{k}}(t))\,B^{(i_{k})}(t)^{*}.
                                      \label{renormaliztion_for_RNmatrix}
          \end{align}
\end{prop}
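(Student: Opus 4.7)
The plan is to verify that the proposed matrix identity is simply a rewriting of Proposition \ref{prop_renormalization_RNderivative} using the definition of the Fourier matrix. In other words, I would unpack the $(i,j)$-entry of the right-hand side of \eqref{renormaliztion_for_RNmatrix} and match it term-by-term with the scalar formula already proved.

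First, I would expand the matrix product: the $(i,j)$-entry of $B^{(i_k)}(t)\,H^{(k+1)}(\phi^*_{i_k}(t))\,B^{(i_k)}(t)^*$ is
\begin{align*}
\sum_{m,n=1}^{n_a} B^{(i_k)}_{i,m}(t)\, h^{(k+1)}_{m,n}(\phi^*_{i_k}(t))\, \bigl(B^{(i_k)}(t)^*\bigr)_{n,j}.
\end{align*}
Using that $(B^{(i_k)}(t)^*)_{n,j}=\overline{B^{(i_k)}_{j,n}(t)}$, together with the definitions
\begin{align*}
B^{(i_k)}_{i,m}(t)=\sum_{x\in T^{(i_k)}_{i,m}}\exp_{x}(t),\qquad
\overline{B^{(i_k)}_{j,n}(t)}=\sum_{y\in T^{(i_k)}_{j,n}}\exp_{-y}(t),
\end{align*}
the product $B^{(i_k)}_{i,m}(t)\,\overline{B^{(i_k)}_{j,n}(t)}$ becomes $\sum_{x\in T^{(i_k)}_{i,m}}\sum_{y\in T^{(i_k)}_{j,n}}\exp_{x-y}(t)$, because $\exp_{x}(t)\exp_{-y}(t)=e^{2\pi\im\langle x-y,t\rangle}=\exp_{x-y}(t)$.

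Substituting this back, the $(i,j)$-entry of $\frac{1}{\det\phi_{i_k}}B^{(i_k)}(t)H^{(k+1)}(\phi^*_{i_k}(t))B^{(i_k)}(t)^*$ equals
\begin{align*}
\frac{1}{\det\phi_{i_k}}\sum_{m,n=1}^{n_a}\sum_{x\in T^{(i_k)}_{i,m}}\sum_{y\in T^{(i_k)}_{j,n}}\exp_{x-y}(t)\,h^{(k+1)}_{m,n}(\phi^*_{i_k}(t)),
\end{align*}
which is precisely the right-hand side in Proposition \ref{prop_renormalization_RNderivative}. By that proposition this coincides with $h^{(k)}_{i,j}(t)=H^{(k)}(t)_{i,j}$ for Lebesgue-a.e.\ $t$, so the matrix identity \eqref{renormaliztion_for_RNmatrix} holds a.e.

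There is no real obstacle here beyond bookkeeping; the only point that could cause confusion is the placement of indices and the complex conjugation when passing from $B^{(i_k)}$ to its adjoint $B^{(i_k)*}$. In particular, it is essential that the summation index $n$ in the Fourier-matrix entry $B^{(i_k)}_{j,n}$ matches the second subscript of $h^{(k+1)}_{m,n}$, and that the sign in $\exp_{-y}$ produced by the conjugate combines with $\exp_{x}$ to yield the displacement $x-y$ appearing in Proposition \ref{prop_renormalization_RNderivative}. Once this indexing is aligned, the equality is immediate.
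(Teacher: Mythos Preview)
Your proof is correct and follows exactly the route the paper intends: the paper states the proposition as an immediate matrix reformulation of Proposition~\ref{prop_renormalization_RNderivative}, and your entrywise expansion makes this translation explicit. There is nothing to add beyond the bookkeeping you already carried out.
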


In Section \ref{subsection_absence_acpart}, we will prove that these
$H^{(k)}(t)$ are in fact zero for certain choices of
$\{\rho_{1},\rho_{2},\ldots, \rho_{m_{a}}\}$. For this purpose, the
following lemma is useful, because a positive definite matrix is zero
if its trace is zero.

\begin{lem}\label{lem_positive_definite}
        For each\/ $k$, the matrix\/ $H^{(k)}(t)$ is a positive definite
        matrix for Lebesgue-a.e.\/ $t\in\R^d$.
\end{lem}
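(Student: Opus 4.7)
The plan is to reduce positivity of $H^{(k)}(t)$ to the fact, already invoked in the proof of Proposition \ref{prop_renormalization_in_dual}, that $\Upsilon^{(k)}_{i,j}=\omega^{(k)}_i\circledast\widetilde{\omega^{(k)}_j}$, where $\omega^{(k)}_i=\sum_{x\in D^{(k)}_i}\delta_x$. The underlying idea is that any Hermitian combination $\sum_{i,j}c_i\overline{c_j}\,\Upsilon^{(k)}_{i,j}$ is itself the autocorrelation of a linear combination of the $\omega^{(k)}_i$, so its Fourier transform is a positive Radon measure, and this positivity will descend to the Radon--Nikodym derivative.

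Concretely, I would fix $k$ and a vector $\mathbf{c}=(c_1,\ldots,c_{n_a})\in\C^{n_a}$, and set $\omega_{\mathbf{c}}=\sum_{i}c_i\omega^{(k)}_i$. By bilinearity of the autocorrelation construction (using that each $B_R$ is symmetric, so restriction commutes with the tilde), together with the existence of each $\Upsilon^{(k)}_{i,j}$, the limit
\[
\omega_{\mathbf{c}}\circledast\widetilde{\omega_{\mathbf{c}}}=\sum_{i,j=1}^{n_a}c_i\overline{c_j}\,\Upsilon^{(k)}_{i,j}
\]
exists and is positive definite as a Radon measure. Its Fourier transform $\sum_{i,j}c_i\overline{c_j}\,\widehat{\Upsilon^{(k)}_{i,j}}$ is therefore a positive Radon measure, and by uniqueness of the Lebesgue decomposition together with the linearity of the decomposition, its absolutely continuous part has Radon--Nikodym derivative $\mathbf{c}^{*}H^{(k)}(t)\mathbf{c}=\sum_{i,j}c_i\overline{c_j}\,h^{(k)}_{i,j}(t)$. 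Hence, for each fixed $\mathbf{c}$, we have $\mathbf{c}^{*}H^{(k)}(t)\mathbf{c}\geq 0$ for Lebesgue-a.e.\ $t\in\R^d$.

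The final step is to exchange the quantifiers, since \emph{a priori} the exceptional null set depends on $\mathbf{c}$. I would choose a countable dense subset $S\subset\C^{n_a}$, for instance all vectors with Gaussian rational entries, and let $N$ be the union of the corresponding null sets; then $N$ is still Lebesgue-null, and for each $t\notin N$ the continuous Hermitian form $\mathbf{c}\mapsto\mathbf{c}^{*}H^{(k)}(t)\mathbf{c}$ is nonnegative on the dense set $S$, hence on all of $\C^{n_a}$. This gives positive semi-definiteness of $H^{(k)}(t)$ for a.e.\ $t$, which is exactly what is needed for the trace-zero argument used subsequently. This quantifier swap is the only non-routine point in the argument; once the autocorrelation factorization $\omega_{\mathbf{c}}\circledast\widetilde{\omega_{\mathbf{c}}}$ is set up, the remainder is bookkeeping.
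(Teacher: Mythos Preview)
Your argument is correct and is essentially the paper's own proof: form the weighted Dirac comb $\omega_{\mathbf c}=\sum_i c_i\omega^{(k)}_i$, observe that its autocorrelation is $\sum_{i,j}c_i\overline{c_j}\,\Upsilon^{(k)}_{i,j}$ and hence positive definite, Fourier transform, and read off nonnegativity of the Radon--Nikodym derivative. Your treatment is in fact slightly more careful than the paper's, which leaves the quantifier exchange (passing from ``for each $\mathbf c$, a.e.\ $t$'' to ``for a.e.\ $t$, all $\mathbf c$'') implicit; one cosmetic point is that $\sum_{i,j}c_i\overline{c_j}\,h^{(k)}_{i,j}(t)$ is literally $\mathbf c^{T}H^{(k)}(t)\overline{\mathbf c}$ rather than $\mathbf c^{*}H^{(k)}(t)\mathbf c$, but since $\mathbf c$ ranges over all of $\C^{n_a}$ this makes no difference to the conclusion.
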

\begin{proof}
        By the definition of a positive definite matrix, we have to
        prove that, whenever we take $w_{1},w_{2},\ldots
        ,w_{n_{a}}\in\C$, we have
        \begin{align}
               \sum_{i,j}w_{i}\overline{w_{j}}h^{(k)}_{i,j}(t)\geqq 0.
        \end{align}
        For $w_{j}$ which are arbitrarily taken from $\C$, set
        \begin{align*}
             \gamma=\sum w_{j}\sum_{t\in  D^{(k)}_{j}}\delta_t.
        \end{align*}
        By \eqref{autoco_measure_for_tiling}, the autocorrelation
        measure for $\gamma$ is positive definite and coincides with
        \begin{align*}
                 \sum_{i,j}w_{i}\,\overline{w_{j}}\,\Upsilon^{(k)}_{j,i}
        \end{align*}
        Since the Fourier transform of this Radon charge is positive,
        so is its Radon--Nikodym derivative, which is
        \begin{align*}
                \sum_{i,j}w_{i}\,\overline{w_{j}}\, h^{(k)}_{j,i},
        \end{align*}
        by the uniqueness of the Lebesgue decomposition.
\end{proof}


\begin{defi}\label{def_lambda(l)_overlineB(l)}
        For each $k\geqq 1$, set
        \begin{align*}
             \lambda^{(k)}=\det\phi_{i_{1}}\det\phi_{i_{2}}\cdots\det\phi_{i_{k}},
        \end{align*}
        and
        \begin{align*}
                      \phi_{(k)}=\phi_{i_1}\circ\phi_{i_{2}}\circ\cdots\phi_{i_k}.
        \end{align*}
        and set $\lambda^{(0)}=1$.
        For each $k\geqq 1$ and $t\in\R^d$, define
        \begin{align*}
          \overline{B}^{(k)}(t)=
          B^{(i_{1})}(t)\, B^{(i_{2})}(\phi_{(1)}^*(t))\, \cdots\, B^{(i_{k})}(\phi_{(k-1)}^*(t)).
        \end{align*}
\end{defi}

By iterating the equation \eqref{renormaliztion_for_RNmatrix}, we have:
\begin{prop}\label{prop_estimate_trace}
       For each\/ $k\geqq 1$, we have
       \begin{align}
         H^{(1)}(t)=\frac{1}{\lambda^{(k)}}\,\overline{B}^{(k)}(t)\,
         H^{(k+1)}(\phi_{(k)}^*(t))\, \overline{B}^{(k)}(t)^{*}
       \end{align}
       for Lebesgue-a.e.\/ $t\in\R$. Hence we have the following inequality for the traces:
       \begin{align}
            \Tr(H^{(1)}(t))
            \leqq\frac{\|\overline{B}^{(k)}(t)\|^{2}}{\lambda^{(k)}}\Tr(H^{(k+1)}(\phi_{(k)}^*(t)).\label{ineq_for_absence_acpart}
       \end{align}
\end{prop}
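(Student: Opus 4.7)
The strategy is to prove the first identity by induction on $k$, using the one-step renormalization equation \eqref{renormaliztion_for_RNmatrix} established in the preceding proposition, and then obtain the trace inequality from the positive definiteness of $H^{(k+1)}$ guaranteed by Lemma \ref{lem_positive_definite}.

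For the base case $k=1$, the identity is exactly \eqref{renormaliztion_for_RNmatrix} applied at level $k=1$, since $\overline{B}^{(1)}(t) = B^{(i_1)}(t)$, $\lambda^{(1)} = \det \phi_{i_1}$, and $\phi_{(1)}^* = \phi_{i_1}^*$. For the inductive step, assuming the formula holds for $k$, apply \eqref{renormaliztion_for_RNmatrix} at level $k+1$ with argument $\phi_{(k)}^*(t)$ to rewrite $H^{(k+1)}(\phi_{(k)}^*(t))$ in terms of $H^{(k+2)}$. Substituting this into the inductive hypothesis, the scalar factor picks up an extra $1/\det \phi_{i_{k+1}}$ (producing $1/\lambda^{(k+1)}$), the left factor $\overline{B}^{(k)}(t)$ is multiplied on the right by $B^{(i_{k+1})}(\phi_{i_{k+1}}^* \circ \phi_{(k)}^*(t)) = B^{(i_{k+1})}(\phi_{(k+1)}^*(t))$, giving exactly $\overline{B}^{(k+1)}(t)$, and the argument of $H^{(k+2)}$ becomes $\phi_{i_{k+1}}^* \circ \phi_{(k)}^*(t) = \phi_{(k+1)}^*(t)$, as desired. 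The right factor updates symmetrically via the adjoint. The only care needed is keeping straight that $(\phi_{i_1} \circ \cdots \circ \phi_{i_k})^* = \phi_{i_k}^* \circ \cdots \circ \phi_{i_1}^*$, which matches the definition of $\phi_{(k)}^*$ in Definition \ref{def_lambda(l)_overlineB(l)}.

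For the trace inequality, set $A = \overline{B}^{(k)}(t)$ and $M = H^{(k+1)}(\phi_{(k)}^*(t))$, so that the first assertion reads $H^{(1)}(t) = \lambda^{(k)-1} A M A^*$. Since $M$ is positive semidefinite Lebesgue-a.e.\ by Lemma \ref{lem_positive_definite}, write its spectral decomposition $M = \sum_\alpha s_\alpha v_\alpha v_\alpha^*$ with $s_\alpha \geq 0$ and orthonormal $v_\alpha$; then
\begin{align*}
   \Tr(AMA^*) = \sum_\alpha s_\alpha \|A v_\alpha\|^2 \leq \|A\|^2 \sum_\alpha s_\alpha = \|A\|^2 \Tr(M),
\end{align*}
which is precisely \eqref{ineq_for_absence_acpart} after dividing by $\lambda^{(k)}$.

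There is no real obstacle here: the induction is a mechanical unwinding, and the trace bound is a standard fact about conjugation of a positive semidefinite matrix by a bounded operator. The only point demanding a modicum of care is bookkeeping for the adjoint composition, to ensure that the iterated arguments collapse neatly into $\phi_{(k)}^*(t)$ matching Definition \ref{def_lambda(l)_overlineB(l)}.
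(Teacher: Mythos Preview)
Your approach is exactly the paper's: iterate the one-step renormalization \eqref{renormaliztion_for_RNmatrix} to obtain the matrix identity, then invoke positive semidefiniteness (Lemma \ref{lem_positive_definite}) for the trace bound. One small slip in your inductive bookkeeping: when you substitute \eqref{renormaliztion_for_RNmatrix} at level $k+1$ with argument $s=\phi_{(k)}^*(t)$, the new Fourier factor is $B^{(i_{k+1})}(s)=B^{(i_{k+1})}(\phi_{(k)}^*(t))$, not $B^{(i_{k+1})}(\phi_{(k+1)}^*(t))$ as you wrote; with the correct argument the product is indeed $\overline{B}^{(k+1)}(t)$, and the rest of your argument goes through unchanged.
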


By Proposition \ref{prop_estimate_trace}, we have the following
strategy to prove that $\Tr(H^{(1)}(t))=0$ and hence $H^{(1)}(t)=0$ 
($H^{(1)}(t)$ is a positive definite matrix) and the absence of an
absolutely continuous component of the diffraction measure for
$\mathcal{T}^{(1)}$.

First, we prove that (i) the traces $\Tr(H^{(k+1)}(\phi^*_{(k)}t))$
are, in a sense, ``bounded from above''.  Next, we prove that (ii) the
convergence
\begin{align}
  \frac{\|\overline{B}^{(k)}(t)\|^{2}}{\lambda^{(k)}}\rightarrow 0\quad
  \text{as $k\rightarrow\infty$}
   \label{convergence_Bk_over_lambda}
\end{align}
implies that $\Tr(H^{(1)}(t))=0$, using (i) and the inequality \eqref{ineq_for_absence_acpart}.

We now proceed to prove (i) (Lemma \ref{lem_existence_A}) and (ii) (Theorem
\ref{thm_liminf_implies_zero_acpart}). The remaining part, condition
\eqref{convergence_Bk_over_lambda}, is proved for a special class of
S-adic tilings in Section \ref{subsection_absence_acpart}.

\begin{lem}\label{lem_existence_A}
    There are $A>0, k_0>0, R_0>0$ such that, for
    any natural number $k\geqq k_0$ and any real number $R>R_0$, we have
    \begin{align}
            \int_{[-R,R]^d}\Tr(H^{(k)}(\phi_{(k-1)}^*(t)))\, dt\leqq AR^d.
    \end{align}
\end{lem}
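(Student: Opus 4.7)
The plan is to recognize that $\Tr(H^{(k)})$ is the Radon--Nikodym density of the absolutely continuous part of the positive Radon measure $\widehat{\Gamma^{(k)}}$, where $\Gamma^{(k)}:=\sum_{i=1}^{n_a}\Upsilon^{(k)}_{i,i}$; indeed, by the computation in the proof of Lemma~\ref{lem_positive_definite}, $\Gamma^{(k)}=\gamma\circledast\widetilde{\gamma}$ for the Dirac comb $\gamma=\sum_i\omega_i$ on $D^{(k)}=\bigsqcup_i D^{(k)}_i$, so $\widehat{\Gamma^{(k)}}$ is positive and $\widehat{\Gamma^{(k)}}_{\mathrm{ac}}\leq\widehat{\Gamma^{(k)}}$ as positive measures. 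With the linear substitution $u=\phi_{(k-1)}^*(t)$, of Jacobian $\lambda^{(k-1)}$,
\begin{align*}
\int_{[-R,R]^d}\Tr(H^{(k)}(\phi_{(k-1)}^*(t)))\,dt
=\frac{1}{\lambda^{(k-1)}}\int_{\phi_{(k-1)}^*([-R,R]^d)}\Tr(H^{(k)}(u))\,du
\leq\frac{\widehat{\Gamma^{(k)}}(E_{k,R})}{\lambda^{(k-1)}},
\end{align*}
where $E_{k,R}:=\phi_{(k-1)}^*([-R,R]^d)$. The whole task reduces to bounding $\widehat{\Gamma^{(k)}}(E_{k,R})$ by $\lambda^{(k-1)}\cdot O(R^d)$ uniformly in $k$; the twist is that $E_{k,R}$ may be a very elongated parallelepiped.

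Next I construct a test kernel adapted to $E_{k,R}$. Fix once and for all a nonnegative $g\in C_c(\R^d)$ with $\supp g\subseteq[-M,M]^d$ and $|\widehat{g}|^2\geq\mathbf{1}_{[-1,1]^d}$ (which exists by a Fej\'er-type construction), set $g_R(s):=R^d g(Rs)$, so $|\widehat{g_R}|^2\geq\mathbf{1}_{[-R,R]^d}$ and $\supp g_R\subseteq[-M/R,M/R]^d$, and pull back via
\begin{align*}
\eta_k(s):=\lambda^{(k-1)}\,g_R\bigl(\phi_{(k-1)}(s)\bigr),\qquad\psi_k:=\eta_k*\widetilde{\eta_k}\in C_c(\R^d).
\end{align*}
The Fourier-scaling identity gives $\widehat{\eta_k}(t)=\widehat{g_R}\bigl((\phi_{(k-1)}^*)^{-1}(t)\bigr)$, whence $\widehat{\psi_k}=|\widehat{\eta_k}|^2\geq\mathbf{1}_{E_{k,R}}$. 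Because $\psi_k=\eta_k*\widetilde{\eta_k}$ with $\eta_k\in C_c(\R^d)$, the defining Parseval identity for Fourier-transformable (and here positive-definite) Radon measures yields $\widehat{\Gamma^{(k)}}(E_{k,R})\leq\widehat{\Gamma^{(k)}}(\widehat{\psi_k})=\Gamma^{(k)}(\psi_k)$.

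Finally, expansivity and FLC make $\Gamma^{(k)}(\psi_k)$ uniformly controllable. Every $\phi_i$ satisfies $\min_{\|v\|=1}\|\phi_i(v)\|>1$, so $\phi_{(k-1)}$ is expansive (or the identity for $k=1$) and $\|\phi_{(k-1)}^{-1}\|\leq 1$ for every $k\geq 1$; hence $\supp\psi_k\subseteq\phi_{(k-1)}^{-1}([-2M/R,2M/R]^d)$ has diameter at most $4M\sqrt{d}/R$, uniformly in $k$. The FLC assumption forces, uniformly in $k$, (i) a positive minimum separation $r_0>0$ of distinct points of $D^{(k)}$ and (ii) a uniform upper bound $D_0$ on $\dens(D^{(k)})$, because every local patch of every $\mathcal{T}^{(k)}$ belongs, up to translation, to a fixed finite family of patches. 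Setting $k_0:=1$ and $R_0:=4M\sqrt{d}/r_0$, for every $R>R_0$ and every $k\geq k_0$ only $z=0$ of $D^{(k)}-D^{(k)}$ lies in $\supp\psi_k$, which yields
\begin{align*}
\Gamma^{(k)}(\psi_k)=\psi_k(0)\,\dens(D^{(k)})=\|\eta_k\|_{L^2}^2\,\dens(D^{(k)})=\lambda^{(k-1)}R^d\|g\|_{L^2}^2\,\dens(D^{(k)})\leq D_0\|g\|_{L^2}^2\,\lambda^{(k-1)}R^d,
\end{align*}
and dividing by $\lambda^{(k-1)}$ delivers the claim with $A:=D_0\|g\|_{L^2}^2$. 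The main obstacle is the clean extraction, from FLC alone, of the uniform constants $r_0$ and $D_0$ for the whole family $\{\mathcal{T}^{(k)}\}_{k\geq 1}$; the rest is a careful scaling computation in which the expansivity of $\phi_{(k-1)}$ shrinks $\supp\psi_k$ just enough to absorb the elongation of $E_{k,R}$.
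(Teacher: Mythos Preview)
Your argument is essentially the paper's: bound the positive measure $\widehat{\Upsilon^{(k)}_{i,i}}$ on a box by pairing $\Upsilon^{(k)}_{i,i}$ against a compactly supported positive-definite kernel whose inverse Fourier transform dominates the indicator, then use expansivity together with the fixed alphabet so that only the $z=0$ term survives. The paper keeps a fixed kernel for $[0,1]^d$, tiles $[-R,R]^d$ by unit cubes, and needs $k\geqq k_0$ for the support condition; you instead scale a single kernel with $R$ (so $k_0=1$ and the threshold moves to $R_0$), which is a mild streamlining of the same idea.

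One small slip: your identity $\Gamma^{(k)}=\gamma\circledast\widetilde{\gamma}$ with $\gamma=\sum_i\omega_i$ is false, since the right-hand side equals $\sum_{i,j}\omega_i\circledast\widetilde{\omega_j}=\sum_{i,j}\Upsilon^{(k)}_{i,j}$, not $\sum_i\Upsilon^{(k)}_{i,i}$. This does not affect the proof, because the conclusion you actually need---that $\widehat{\Gamma^{(k)}}$ is a positive measure---still holds, each $\Upsilon^{(k)}_{i,i}=\omega_i\circledast\widetilde{\omega_i}$ being positive definite individually. Also note that the uniform separation $r_0$ within each $D^{(k)}_i$ and the uniform density bound $D_0$ follow already from the fixed alphabet (disjoint interiors of translates of a fixed $\supp T_i$ force a positive minimum displacement), so FLC is not strictly required at this step.
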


\begin{proof}
%
%
                  By \cite[Corollary 4.9.12]{Baake-Grimm-vol2}, there is an $f\in
         C_{c}(\R^d)$ such that $\widecheck{f*\widetilde{f}}\geqq
         1_{[0,1]^d}$, where $1_{S}$ for $S\subset\R^d$ is the characteristic function
         for $S$.  By computation, we have
         \begin{align*}
                1_{\phi_{(k-1)}^*([0,1]+t)}\leqq (T_t(\widecheck{f*\widetilde{f}}))\circ(\phi_{(k-1)}^*)^{-1},
         \end{align*}
         where $T_t$ denotes the translation by $t$.
         

                  If $k$ is large enough, $z=0$ is the only element $z$ in $D^{(k)}_i-D^{(k)}_i$ such that
         $\phi_{(k-1)}(z)\in\supp f*\widetilde{f}$.    
         For any $k>0$ which is large enough in this sense, $i=1,2,\ldots, n_a$ and $t\in\R^d$, we have
         \begin{align*}
                \langle 1_{\phi_{(k-1)}^*([0,1]+t)},\widehat{\Upsilon_{i,i}^{(k)}}\rangle
                &\leqq\det\phi_{(k-1)}
                \langle\exp_{-t}(f*\widetilde{f})\circ\phi_{(k-1)} ,\Upsilon_{i,i}^{(k)}\rangle\\
                &\leqq \det\phi_{(k-1)}f*\widetilde{f}(0)\nu^{(k)}_{i,i}(0).
         \end{align*}
         
         Note that $\nu^{(k)}_{i,i}(0)=\dens D^{(k)}_i$. Set $B=\sup_{k,i}\dens D^{(k)}_i$.
         This is finite since $\mathcal{A}$ is fixed.
         For an arbitrary choice of $R>0$, if $n$ is a natural number with $n\leqq R<n+1$, we have:
         \begin{align*}
                \int_{[-R,R]^d}\Tr(H^{(k)}(\phi^*_{(k-1)}(t))dt
                &\leqq\sum_{i=1}^{n_a}\sum_{v\in\mathbb{Z}^d\cap[-n-1,n]^d}\frac{1}{\det\phi_{(k-1)}}\left\langle\widehat{\Upsilon^{(k)}_{i,i}}, 1_{\phi^*_{(k-1)}([0,1]^d+v)}\right\rangle\\
                &\leqq n_a(2R+2)^df*\widetilde{f}(0)B.
         \end{align*}
         
         By taking $A>2^dn_a f*\widetilde{f}(0)B$, we have the conclusion.  
%
\end{proof}

We now prove a sufficient condition for $H^{(1)}$ to be zero.  As we
stated before, the condition is the exponential convergence of
$\frac{\|\overline{B}^{(k)}(t)\|^{2}}{\lambda^{(k)}}\rightarrow 0$ as
$k\rightarrow\infty$.  We will prove that this sufficient condition is
actually satisfied for a class of S-adic tilings, in Section
\ref{subsection_absence_acpart}, and thus prove the absence of
absolutely continuous part of the diffraction spectrum for this class.

\begin{thm}\label{thm_liminf_implies_zero_H}
        If there is\/ $\e>0$ such that
        \begin{align}
               \liminf_{k\rightarrow\infty}\left(\frac{1}{2k}\log\lambda^{(k)}
               -\frac{1}{k}\log\bigl\|\overline{B}^{(k)}(t)\bigr\|\right)>\e
               \label{eq2_liminf}
        \end{align}
        for Lebesgue-a.e.\/ $t\in\R$, then we have\/ $H^{(1)}=0$ for
        Lebesgue-a.e.\/ $t\in\R$, where\/ $H^{(1)}$ is the
        Radon--Nikodym matrix for\/ $\mathcal{T}^{(1)}$ as defined in
        Definition \ref{def_R-Nmatrix}.  
 \end{thm}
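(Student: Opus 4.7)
The plan is to combine the iterated trace inequality \eqref{ineq_for_absence_acpart} from Proposition \ref{prop_estimate_trace} with the uniform average bound from Lemma \ref{lem_existence_A}, and exploit the fact that the hypothesis \eqref{eq2_liminf} provides exponential decay of the coefficient $\|\overline{B}^{(k)}(t)\|^2/\lambda^{(k)}$ along suitable increasing measurable sets whose union is conull.

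First, I would rewrite the hypothesis in a more convenient pointwise form. Multiplying \eqref{eq2_liminf} by $k$ and exponentiating, the condition is equivalent to: for a.e.\ $t\in\R^d$ there exists $N(t)\in\mathbb{N}$ such that
\begin{align*}
   \frac{\|\overline{B}^{(k)}(t)\|^{2}}{\lambda^{(k)}}\leqq e^{-2k\e}
   \qquad\text{for all }k\geqq N(t).
\end{align*}
Define $G_{N}=\{t\in\R^d\mid\text{the above holds for all }k\geqq N\}$. Each $G_{N}$ is measurable (as $\overline{B}^{(k)}$ is continuous in $t$), the sequence $(G_{N})_{N}$ is increasing, and by the hypothesis $\bigcup_{N}G_{N}$ has full Lebesgue measure.

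Second, I would integrate the inequality \eqref{ineq_for_absence_acpart} on $G_{N}\cap[-R,R]^d$. Fix $N\geqq k_{0}$ (with $k_{0}$ from Lemma \ref{lem_existence_A}) and $R\geqq R_{0}$. For any $k\geqq N$, on $G_{N}$ we have
\begin{align*}
   \Tr(H^{(1)}(t))\leqq e^{-2k\e}\,\Tr(H^{(k+1)}(\phi_{(k)}^{*}(t))).
\end{align*}
Integrating over $G_{N}\cap[-R,R]^d$ and then applying Lemma \ref{lem_existence_A} at index $k+1$,
\begin{align*}
   \int_{G_{N}\cap[-R,R]^d}\Tr(H^{(1)}(t))\,dt
   \leqq e^{-2k\e}\int_{[-R,R]^d}\Tr(H^{(k+1)}(\phi_{(k)}^{*}(t)))\,dt
   \leqq e^{-2k\e}AR^{d}.
\end{align*}
Letting $k\to\infty$ gives $\int_{G_{N}\cap[-R,R]^d}\Tr(H^{(1)}(t))\,dt=0$. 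Since $\Tr(H^{(1)})\geqq 0$ by Lemma \ref{lem_positive_definite}, this forces $\Tr(H^{(1)})=0$ a.e.\ on $G_{N}\cap[-R,R]^d$. Taking unions over $N$ and $R$ yields $\Tr(H^{(1)})=0$ a.e.\ on $\R^d$, and positive-semidefiniteness of $H^{(1)}(t)$ then gives $H^{(1)}=0$ a.e.

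The only mildly delicate point is making sure Lemma \ref{lem_existence_A} can be applied to the integrand $\Tr(H^{(k+1)}(\phi_{(k)}^{*}(\cdot)))$ that appears naturally on the right-hand side (with the shifted indexing $k\mapsto k+1$), and ensuring $\Tr(H^{(1)})\in L^{1}_{\mathrm{loc}}(\R^d)$ so that the integral is well-defined; both are immediate from Lemma \ref{lem_existence_A} and from the fact that each $h^{(1)}_{i,i}$ is a Radon--Nikodym derivative of a Radon measure. Aside from that, the argument is a direct Borel--Cantelli-style exhaustion and presents no substantial obstacle.
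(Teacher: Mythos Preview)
Your proof is correct and follows essentially the same approach as the paper: both define an increasing exhaustion of $[-R,R]^d$ by sets on which the exponential decay $\|\overline{B}^{(k)}(t)\|^{2}/\lambda^{(k)}\leqq e^{-ck\e}$ holds, then combine Proposition~\ref{prop_estimate_trace} with Lemma~\ref{lem_existence_A} and Lemma~\ref{lem_positive_definite}. The paper splits $\int_{[-R,R]^d}\Tr(H^{(1)})$ into the part over $E_k$ and its complement and shows both tend to zero, whereas you show directly that $\int_{G_N\cap[-R,R]^d}\Tr(H^{(1)})=0$ and then exhaust; this is a minor organizational variation, not a different method.
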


\begin{proof}
         Take an arbitrary
          positive real number $R$. For each $k\geqq 1$, set
          \begin{align*}
                 E_{k}=\bigl\{t\in [-R,R]^d\mid\inf_{l\geqq k}\bigl(\tfrac{1}{2l}\log\lambda^{(l)}-\tfrac{1}{l}\log\|\overline{B}^{(l)}(t)\|\bigr)\geqq\tfrac{\e}{2}\bigr\}.
          \end{align*}
          The sets $E_{1},E_{2},\ldots$ are increasing and $\bigcup_{k\geqq 1}E_{k}$ has
          full measure $(2R)^d$.

          For each $k\geqq 1$ and $t\in E_{k}$, we have
          \begin{align*}
                  \frac{\|\overline{B}^{(k)}(t)\|^{2}}{\lambda^{(k)}}\leqq e^{-k\e},
          \end{align*}
          and
          \begin{align*}
            \int_{[-R,R]^d}\Tr(H^{(1)}(t))\, dt
            &=\int_{E_{k}}\Tr(H^{(1)}(t))\, dt+\int_{[-R,R]^d\setminus E_{k}}\Tr(H^{(1)}(t))\, dt\\
                 &\leqq\int_{E_{k}}\frac{\|\overline{B}^{(k)}(t)\|^{2}}{\lambda^{(k)}}\,\Tr(H^{(k+1)}(\phi_{(k)}^*(t)))\, dt
                         +\int_{[-R,R]^d\setminus E_{k}}\Tr(H^{(1)}(t))\, dt.
          \end{align*}
          Here, if $R$ is large enough, the first term tends to zero as $k\rightarrow\infty$, since there exists $A>0$ as in
          Lemma \ref{lem_existence_A} and since
          \begin{align*}
                  \int_{E_{k}}\frac{\|\overline{B}^{(k)}(t)\|^{2}}{\lambda^{(k)}}\, \Tr(H^{(k+1)}(\phi_{(k)}^*(t)))\, dt
                  &\leqq e^{-k\e}\int_{[-R,R]^d}\Tr(H^{(k+1)}(\phi_{(k)}^*t))\, dt\\
                  &\leqq e^{-k\e}AR^d.
          \end{align*}
          The second term also tends to zero because  $\bigcup_{k\geqq 1}E_{k}$ has
          full measure $(2R)^d$ and 
          \begin{align*}
                \int_{[-R,R]^d}\Tr(H^{(1)}(t))\, dt<\infty.
          \end{align*}
          We see that
          \begin{align*}
                 \int_{[-R,R]^d}\Tr(H^{(1)}(t))\,dt=0,
          \end{align*}
          which, by  Lemma \ref{lem_positive_definite},
          implies that $\Tr(H^{(1)}(t))=0$ a.e., which in turn implies that $H^{(1)}(t)=0$ a.e.,
          since all the eigenvalues are zero and $H^{(1)}(t)$ is diagonalizable.
\end{proof}

\begin{proof}[The proof for Theorem \ref{thm_liminf_implies_zero_acpart}]
\textcolor{red}{
          The absence of the absolutely continuous part of the diffraction spectrum for $\mathcal{T}^{(1)}$
          follows from equation \eqref{autoco_measure_for_tiling}.}
\end{proof}

\begin{rem}
       \textcolor{red}{If} the patch frequency converges uniformly, the corresponding dynamical system for
       $\calT^{(1)}$ is uniquely ergodic, and so by \cite[Theorem 5]{Baake-Lenz}, the absence
       of absolutely continuous diffraction spectrum holds for any tiling in the continuous hull
       of $\mathcal{T}^{(1)}$.
\end{rem}

\subsection{The absence of absolutely continuous spectrum for binary block-substitution case}
\label{subsection_absence_acpart}
In this section, we deal with S-adic tilings with binary \textcolor{red}{block}
substitution rules.  The following setting is assumed for the whole
section.

\begin{setting}\label{setting_binary_const_length}
        In this section,  let\/ $\mathcal{A}=\{T_{1},T_{2}\}$ consist of two tiles \textcolor{red}{in $\R^d$ with\/
         $\supp T_{j}=[0,1]^d$ for each\/ $j$.}
        
        We take a finite family of substitutions\/
        $\rho_{1},\rho_{2},\ldots, \rho_{m_{a}}$ with alphabet\/
        $\mathcal{A}$.  \textcolor{red}{We assume these are block substitutions.
        In other words, the expansion map $\phi_i$ for $\rho_{i}$
           is defined by a diagonal matrix with integer diagonal entries.
         $B^{(i)}(t)$ is
        the Fourier matrix for\/ $\rho_{i}$.}
       
       We assume that, for each\/ $i$, there is a $t\in\R^d$ such that
       the Fourier matrix\/ $B^{(i)}(t)$ is not singular.  This means
       that\/ $B^{(i)}(t)$ is not singular for almost every\/
       $t\in\R^d$, since the determinant is a sum of exponential
       functions.
       
       Next, we take a measure-preserving system\/
       $(X,\mathcal{B},\mu,S_{0})$, where\/ $(X,\mathcal{B},\mu)$ is a
       standard probability space and\/ $S_{0}\colon X\rightarrow X$ is a
       measure-preserving transformation.  We assume that the map $S_{0}$
       is ergodic and surjective.
       
       We choose a decomposition
       \begin{align*}
               X=\bigcup_{j=1}^{m_{a}}E_{j}
       \end{align*}
       of\/ $X$ into pairwise disjoint subsets\/ $E_{j}\in\mathcal{B}$,
       $j=1,2,\ldots, m_{a}$.
\end{setting}


\textcolor{red}{The goal of this section is to prove Theorem \ref{thm_absence_for_binary}.}
In Theorem \ref{thm_absence_for_binary}, we will show the
absence of the absolutely continuous component in the diffraction
spectrum for S-adic tilings constructed from $\rho_1,\rho_2,\ldots,\rho_{m_a}$ obtained from ``almost all'' directive
sequences given by the coding of the system $(X,S_{0})$, by using Theorem
\ref{thm_liminf_implies_zero_acpart}.  
Specifically, we prove that there is \textcolor{red}{some} $E\subset X$ with measure $1$ such that, for each $x\in E$, the directive sequence $i_{1},i_{2},\ldots$ defined by
\begin{align*}
        S_{0}^{n-1}(x)\in E_{i_{n}},
\end{align*}
for each $n$, gives rise to an S-adic tiling with zero absolutely continuous diffraction spectrum.
(We prove that any S-adic tiling belonging to such a directive sequence has this property.)
\textcolor{red}{For example, if $X\subset\{1,2,\ldots,m_a\}^{\mathbb{N}}$ is a subshift with surjective shift-map and 
$E_j=\{x\in X\mid x_1=j\}$, then the directive sequence defined by $x\in X$ is $x$ itself.
The theorem says that for almost all $x\in X$, the S-adic tilings belonging to $x$ have
zero absolutely continuous spectrum.
Special cases for the theorem are stated in Introduction.
The readers can skip the detail of this section
 and jump to Theorem \ref{thm_absence_for_binary} and Example \ref{example_absense_acpart_binary}
for the first reading.}

\textcolor{red}{
Note that  since each $\rho_i$ is a block substitution, 
 the digit $T^{(i)}_{k,l}$ for $\rho_i$ is inside $\mathbb{Z}^d$, and moreover for each $k$ and $i$,}
        \begin{align*}
               T^{(i)}_{1,k}\cup T^{(i)}_{2,k}=\phi_i([0,1)^d)\cap\mathbb{Z}^d.
        \end{align*}
        \textcolor{red}{We denote this set by $F_i$.}

Theorem
\ref{thm_liminf_implies_zero_acpart} applies to the Fourier matrices
$B^{(i)}(t)$, but we will replace $B^{(i)}(t)$ with the following
matrices $C^{(i)}(z)$, where $z\in\T^d$: the latter has the
advantage of a compact domain.

\begin{defi}
        For each $i=1,2,\ldots, m_{a}$ and $t\in\R^d$, set
        \begin{align*}
                C^{(i)}(\pi(t))=B^{(i)}(t).
        \end{align*}
        ($\pi\colon\R^d\rightarrow\T^d$ is the quotient map.) \textcolor{red}{
        Since  the elements in the digits are all in $\mathbb{Z}^d$,}
         this is well-defined.
        
        \textcolor{red}{For a technical reason, we replace the system $(X,\mathcal{B},\mu,S_0)$ with its natural extension.}
        Let $(Y,\mathcal{C},\nu,S_{1})$ be a natural extension of $(X,\mathcal{B},\mu,S_{0})$.
        In other words, $(Y,\mathcal{C},\nu)$ is a separable and complete probability space
        and $S_{1}\colon Y\rightarrow Y$ is an invertible measure-preserving transformation, and
        there exists
        a factor map $f\colon Y\rightarrow X$. Such a system and a factor map exist since
        $S_{0}$ is surjective, and $S_{1}$ is ergodic \cite[Section 1.6.3]{Sarig}.        
        
        \textcolor{red}{The main tool to prove Theorem \ref{thm_absence_for_binary} is Furstenberg--Kesten theorem
        and Oseledets ergodic theorem. For these theorems, see for example \cite{Viana}.
        We will use these theorems to the following cocycle and obtain an estimate \eqref{eq_liminf}.}
        We define a map $C\colon\mathbb{T}^d\times Y\rightarrow GL_{2}(\mathbb{C})$ via
        \begin{align*}
               C(z,x)=
               \begin{cases}
                      C^{(i)}(z)^{-1}&\text{ if $x\in f^{-1}(E_{i})$ and $C^{(i)}(z)$ is invertible,}\\
                      I      &\text{ otherwise}.
               \end{cases}
        \end{align*}
        ($I$ is the identity matrix, but this does not matter since $C^{(i)}(z)$ is almost surely
        invertible.)
        
        We set $R\colon\T^d\times Y\rightarrow\T^d\times Y$ via
        \begin{align*}
               R(\pi(t),x)=\bigl(\pi(\phi_{i}(t)), S_{1}(x)\bigr)
        \end{align*}
        for $t\in\R^d$ and $x\in f^{-1}(E_{i})$, for each $i=1,2,\ldots, m_{a}$.
\end{defi}

The following lemma will be useful when we use ergodic theorems later.
Here, $\mu_{\mathrm{L}}$ is the Lebesgue measure on $\mathbb{T}^d$, which
is measure theoretically identified with $[0,1)^d$.
It is in the proof of this lemma where we use the fact that $S_{1}$ is invertible and ergodic.
\begin{lem}\label{lem_ergodicity_R}
        The transformation\/ $R$ is\/ $\mu_{\mathrm{L}}\times\nu$-preserving
        and ergodic.
\end{lem}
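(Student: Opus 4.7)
The plan is to verify measure-preservation by a direct fibered computation, and then to establish ergodicity by expanding an $R$-invariant $L^{2}$-function in a Fourier series in the torus variable, reducing the problem to an expansion argument on the Fourier coefficients.

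For measure-preservation, I would decompose $\T^{d}\times Y$ as $\bigsqcup_{i=1}^{m_{a}}\T^{d}\times f^{-1}(E_{i})$; on the $i$-th piece $R$ acts as $(z,x)\mapsto(\bar{\phi}_{i}(z),S_{1}(x))$, where $\bar{\phi}_{i}\colon\T^{d}\rightarrow\T^{d}$ is the toral endomorphism induced by $\phi_{i}$. Since $\phi_{i}$ is a diagonal integer matrix with diagonal entries of absolute value at least $2$, the map $\bar{\phi}_{i}$ is a $\det\phi_{i}$-to-one endomorphism preserving $\mu_{\mathrm{L}}$. A Fubini computation combined with the $S_{1}$-invariance of $\nu$ then gives $(\mu_{\mathrm{L}}\times\nu)(R^{-1}(A))=(\mu_{\mathrm{L}}\times\nu)(A)$ for every measurable $A\subset\T^{d}\times Y$.

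For ergodicity, take an $R$-invariant $g\in L^{2}(\T^{d}\times Y,\mu_{\mathrm{L}}\times\nu)$, and for $\nu$-a.e.\ $x\in Y$ expand
\begin{align*}
      g(z,x)=\sum_{m\in\mathbb{Z}^{d}}c_{m}(x)\,\exp_{m}(z),
\end{align*}
where the measurable coefficients $c_{m}\colon Y\rightarrow\C$ satisfy $\sum_{m}|c_{m}|^{2}\in L^{1}(\nu)$ by Parseval and Fubini. Comparing Fourier coefficients of the identity $g(R(z,x))=g(z,x)$ for a.e.\ $x\in f^{-1}(E_{i})$ gives the recursion
\begin{align*}
      c_{m}(x)=
      \begin{cases}
              c_{\phi_{i}^{-1}m}(S_{1}(x)) & \text{if } m\in\phi_{i}\mathbb{Z}^{d},\\
              0                           & \text{otherwise.}
      \end{cases}
\end{align*}
For $m=0$ this reads $c_{0}=c_{0}\circ S_{1}$ $\nu$-a.e., so $c_{0}$ is $\nu$-a.e.\ constant by the ergodicity of $S_{1}$. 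For $m\neq 0$, iterating the recursion along the forward $S_{1}$-orbit of $x$ forces
\begin{align*}
      \phi_{i_{k}(x)}^{-1}\cdots\phi_{i_{2}(x)}^{-1}\phi_{i_{1}(x)}^{-1}m\in\mathbb{Z}^{d}
\end{align*}
for every $k\geqq 1$, where $i_{j}(x)$ denotes the index with $S_{1}^{j-1}(x)\in f^{-1}(E_{i_{j}(x)})$. But the norm of this vector is bounded by $\|m\|/2^{k}$, which tends to $0$, forcing $m=0$ and giving a contradiction. Hence $c_{m}\equiv 0$ a.e.\ for $m\neq 0$, so $g$ is $(\mu_{\mathrm{L}}\times\nu)$-a.e.\ constant.

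The main obstacle is the measure-theoretic bookkeeping for the iteration: the recursion above holds only outside an $m$-dependent null set $N_{m}\subset Y$, so one must work on $Y\setminus\bigcup_{k\geqq 0}\bigcup_{m\in\mathbb{Z}^{d}}S_{1}^{-k}(N_{m})$, which is conull because $\mathbb{Z}^{d}$ is countable and $S_{1}$ is $\nu$-preserving. Once the recursion holds along every forward orbit of a full-measure set of $x$, the arithmetic growth of the diagonal entries of the $\phi_{i}$ closes the argument immediately.
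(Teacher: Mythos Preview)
Your proof is correct, but it proceeds along a genuinely different route from the paper. The paper does not use Fourier analysis at all; instead, it establishes a Ces\`aro-mixing property for product sets. Concretely, the paper first shows that for Borel $E,F\subset[0,1)^{d}$ the quantity $\mu_{\mathrm{L}}(T_{n}^{-1}(E)\cap F)$ converges to $\mu_{\mathrm{L}}(E)\,\mu_{\mathrm{L}}(F)$ as the entries of $n$ go to infinity, where $T_{n}$ is the toral endomorphism for the diagonal integer matrix with entries $n$. It then decomposes $R^{-n}$ over the cylinder sets $E'_{\mathfrak{i}}=S_{1}^{n}(E'_{i_{1}})\cap\cdots\cap S_{1}(E'_{i_{n}})$ and combines the torus asymptotic independence with the ergodicity of $S_{1}$ to obtain
\[
\lim_{N\to\infty}\frac{1}{N}\sum_{n=0}^{N-1}\mu_{\mathrm{L}}\times\nu\bigl(R^{-n}(F_{1}\times G_{1})\cap(F_{2}\times G_{2})\bigr)
=\mu_{\mathrm{L}}\times\nu(F_{1}\times G_{1})\,\mu_{\mathrm{L}}\times\nu(F_{2}\times G_{2}),
\]
which is the standard set-theoretic characterisation of ergodicity. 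Your Fourier argument is more direct for this particular statement: it exploits the expansiveness of the $\phi_{i}$ at the level of the dual lattice and bypasses the separate mixing lemma on the torus. The paper's approach, on the other hand, isolates the quantitative asymptotic-independence property of the maps $T_{n}$ as a standalone lemma, and only uses ergodicity of $S_{1}$ in its Ces\`aro form, whereas you invoke it through invariant functions and use invertibility of $S_{1}$ only implicitly (in fact not at all---your argument only needs the forward orbit).
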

\begin{proof}
     This will be proved in the appendix.
\end{proof}

We apply the Furstenberg--Kesten theorem and the multiplicative
ergodic theorem by Oseledets to $\mathbb{T}^d\times Y$, $R$ and $C$. 
 In
order to invoke these theorem, we first have to confirm the following
lemma. 

\begin{lem}
      The two maps that send\/ $(z,x)\in\T^d\times Y$ to\/ $\log^{+}\|C(z,x)\|$ and to\/
      $\log^{+}\|C(z,x)^{-1}\|$, respectively,
       are both integrable with respect to\/ $\mu_{\mathrm{L}}\times\nu$ (the product measure of\/ 
       $\mu_{\mathrm{L}}$ and\/ $\nu$).
\end{lem}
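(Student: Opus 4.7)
The plan is to handle $\log^+\|C(z,x)\|$ and $\log^+\|C(z,x)^{-1}\|$ separately, exploiting the fact that the alphabet is binary, so each $C^{(i)}(z)$ is a $2\times 2$ matrix. In both cases, only the event $x\in f^{-1}(E_i)$ with $C^{(i)}(z)$ invertible contributes, since on its complement $C(z,x)=I$ and both quantities vanish.

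For $\log^+\|C(z,x)^{-1}\|$, I would reduce to the uniform boundedness of $\|C^{(i)}(z)\|$. The entries of $C^{(i)}(\pi(t))=B^{(i)}(t)$ are finite sums $\sum_{s\in T^{(i)}_{k,j}}e^{2\pi\im\langle s,t\rangle}$ of unit-modulus exponentials, hence bounded in modulus by $\max_{k,j}\card T^{(i)}_{k,j}$. Thus $\|C^{(i)}(z)\|$ is bounded uniformly in $z\in\T^d$ by a constant depending only on the substitution family, so $\log^+\|C(z,x)^{-1}\|$ is uniformly bounded on the probability space $\T^d\times Y$, and therefore integrable.

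For $\log^+\|C(z,x)\|=\log^+\|C^{(i)}(z)^{-1}\|$, I would invoke the cofactor formula $C^{(i)}(z)^{-1}=(\det C^{(i)}(z))^{-1}\mathrm{adj}(C^{(i)}(z))$. Since $C^{(i)}(z)$ is $2\times 2$, the entries of $\mathrm{adj}(C^{(i)}(z))$ are, up to sign and permutation, the same bounded trigonometric polynomials as those of $C^{(i)}(z)$; hence $\|\mathrm{adj}(C^{(i)}(z))\|$ is bounded by some constant $K_i$. This yields
\begin{align*}
\log^+\|C^{(i)}(z)^{-1}\|\leqq \log K_i+\max\bigl(0,-\log|\det C^{(i)}(z)|\bigr),
\end{align*}
so integrability reduces to showing $\int_{\T^d}\log|\det C^{(i)}(z)|\,d\mu_{\mathrm{L}}>-\infty$ for each $i$, the positive part being automatically bounded since $|\det C^{(i)}|$ is.

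Since every digit set $T^{(i)}_{k,j}$ lies in $\mathbb{Z}^d$, the function $\det C^{(i)}$ is the descent to $\T^d$ of a Laurent polynomial in the coordinate exponentials, and by the Setting assumption it is not identically zero. The integral in question is then the logarithmic Mahler measure of this non-zero Laurent polynomial, which is finite by iterated application of Jensen's formula in each variable (the one-variable Jensen bounds the inner integral below by $\log$ of the leading coefficient in $z_1$, and one then inducts on the coefficient, which is itself a non-zero Laurent polynomial in the remaining variables). Summing the contributions over the finitely many $i=1,\ldots,m_a$ with weights $\nu(f^{-1}(E_i))\leqq 1$ gives the claim. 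The main obstacle is this last step, and it is precisely where the Setting hypothesis that $B^{(i)}(t)$ is generically invertible enters essentially: without it, $\det C^{(i)}$ could vanish identically on $\T^d$, and $\log|\det C^{(i)}|$ would have a non-integrable singularity.
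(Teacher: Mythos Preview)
Your proof is correct and follows essentially the same route as the paper: reduce to each $i$, use boundedness of the trigonometric-polynomial entries for $\|C^{(i)}(z)\|$, and for $\|C^{(i)}(z)^{-1}\|$ use the adjugate formula to reduce to integrability of $\log|\det C^{(i)}(z)|$. The only difference is cosmetic: where you sketch the finiteness of the logarithmic Mahler measure of the non-zero polynomial $\det C^{(i)}$ via iterated Jensen in each variable, the paper simply cites the corresponding lemma in Schinzel's book.
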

\begin{proof}
       It suffices to prove that the maps that send $z\in\T^d$ to $\log\|C^{(i)}(z)^{\pm 1}\|$ are
       integrable for each $i$. 
       
       We take $i$ and fix it. Since $\|C^{(i)}(z)\|$ is bounded for
       $z\in\T^d$, we see that $\log^{+}\|C^{(i)}(z)\|$ is integrable. To
       prove the integrability of $\log^{+}\|C^{(i)}(z)^{-1}\|$, note
       that $C^{(i)}(z)^{-1}=\frac{1}{|\det
         C^{(i)}(z)|}C^{(i)}(z)^{\text{ad}}$, where $\text{ad}$
       denotes the adjugate matrix. It suffices to show that
       $\log|\det C^{(i)}(z)|$ is integrable, and this is proved by
       \cite[p.223, Lemma 2]{Schinzel}.
\end{proof}

In what follows, we set
\begin{align*}
       C_{n}(\omega)=C(R^{n-1}(\omega))\, C(R^{n-2}(\omega))\, \cdots\, C(\omega),
\end{align*}
for each $\omega\in\T^d\times Y$ and $n\in\Zpo$.
(It is customary to denote this by $C^{n}$, but to clearly distinguish this from $C^{(i)}$,
we prefer to use this notation.)

\begin{prop}\label{FK_and_Oseledets_thm}
      There are\/ $\chi_{+},\chi_{-}\in\R$ such that
      \begin{align*}
            \chi_{+}&=\lim_{n\rightarrow\infty}\frac{1}{n}\log\| C_n(\omega)\|,\\
            \chi_{-}&=\lim_{n\rightarrow\infty}\frac{1}{n}\log\| C_n(\omega)^{-1}\|^{-1},
      \end{align*}
      for almost all\/ $\omega\in\T^d\times Y$.
      
      We also have
      \begin{align}
             \chi_{+}+\chi_{-}=\lim_{n\rightarrow\infty}\frac{1}{n}\log|\det  C_n(\omega)|
             \label{eq_det_exponents}
      \end{align}
      for almost all\/ $\omega\in\T^d\times Y$.
      
      For each\/ $v\in\R^{2}\setminus\{0\}$, the limit
      \begin{align*}
            \lim_{n\rightarrow\infty}\frac{1}{n}\log\| C_n(\omega)v\|
      \end{align*}
      converges for almost all\/ $\omega$ and the limit is either\/
      $\chi_{+}$ or\/ $\chi_{-}$.
      
\end{prop}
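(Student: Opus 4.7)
The plan is to invoke the Furstenberg--Kesten theorem and the Oseledets multiplicative ergodic theorem on the cocycle $(C_n)_n$ over the base system $(\T^d\times Y,\mu_{\mathrm{L}}\times\nu,R)$. The hypotheses are already in place: the preceding lemma gives integrability of $\log^+\|C(\omega)\|$ and $\log^+\|C(\omega)^{-1}\|$, while Lemma~\ref{lem_ergodicity_R} provides that $R$ preserves $\mu_{\mathrm{L}}\times\nu$ and is ergodic.

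First, I would apply Furstenberg--Kesten (i.e.\ the subadditive ergodic theorem to the subadditive sequence $\log\|C_n(\omega)\|$, which is subadditive by submultiplicativity of the operator norm) to conclude that $\tfrac{1}{n}\log\|C_n(\omega)\|$ converges a.s.\ to an $R$-invariant function, which by ergodicity is a.s.\ constant; call it $\chi_+$. Applying the same theorem to the sequence $\log\|C_n(\omega)^{-1}\|$ gives an analogous a.s.\ constant limit, and setting $\chi_-$ to be its negative yields the second stated convergence.

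Next, to obtain \eqref{eq_det_exponents}, observe that $\det C_n(\omega)=\prod_{k=0}^{n-1}\det C(R^k\omega)$ by multiplicativity of the determinant, so $\tfrac{1}{n}\log|\det C_n(\omega)|$ is a Birkhoff average of the function $\log|\det C|$. This function is integrable: on the set where $C^{(i)}$ is invertible one has $\log|\det C(z,x)|=-\log|\det C^{(i)}(z)|$, and the integrability of $\log|\det C^{(i)}|$ was already established in the proof of the preceding lemma via the Schinzel estimate. Birkhoff's theorem then yields a.s.\ convergence to $\int\log|\det C|\,d(\mu_{\mathrm{L}}\times\nu)$; the identification of this integral with $\chi_++\chi_-$ is the standard fact that for a $2\times 2$ cocycle the sum of the two Lyapunov exponents equals the Birkhoff average of $\log|\det|$ (since $|\det C_n|$ is the product of the two singular values of $C_n$, whose logarithms grow at rates $\chi_+$ and $\chi_-$ respectively).

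Finally, for each fixed $v\in\R^2\setminus\{0\}$, the claim follows from the one-sided Oseledets multiplicative ergodic theorem applied to $C$: there exists a measurable filtration $\{0\}\subsetneq V_1(\omega)\subseteq\C^2$ (with $V_1(\omega)=\C^2$ if only one exponent is realized) such that, for $\mu_{\mathrm{L}}\times\nu$-a.e.\ $\omega$, $\lim\tfrac{1}{n}\log\|C_n(\omega)v\|$ equals $\chi_-$ when $v\in V_1(\omega)$ and $\chi_+$ otherwise. The main delicate point is that $R$ fails to be invertible because $\phi_i$ on $\T^d$ is a $|\det\phi_i|$-to-one endomorphism; however, the one-sided version of Oseledets (which requires only a measure-preserving base, not invertibility) already gives the convergence to one of the two Lyapunov exponents, which is all that is needed here. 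The existence of the limit for a.e.\ $\omega$ for each fixed $v$ is immediate from the fact that the exceptional set in Oseledets does not depend on $v$.
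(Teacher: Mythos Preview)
Your proposal is correct and follows essentially the same approach as the paper, which simply cites the Furstenberg--Kesten theorem and the Oseledets theorem from Viana's book. You supply considerably more detail than the paper's one-sentence proof, and your remark about the one-sided Oseledets theorem sufficing when $R$ is non-invertible is a useful observation; the only point the paper adds that you leave implicit is that, since the cocycle takes values in $GL_2(\C)$, one uses the complex version of the Oseledets filtration (flags in $\C^2$ rather than $\R^2$).
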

\begin{proof}
      This is a direct consequence of the Furstenberg--Kesten theorem
      \cite[Theorem 3.12]{Viana} and the Oseledets theorem
      \cite[Theorem 4.1 and Section 4.3.3]{Viana}, if we modify the
      latter to the case of flags inside $\C^{2}$, not $\R^{2}$.
\end{proof}

We then analyze $\chi_{+}$ and $\chi_{-}$ to obtain the
estimate \eqref{eq_liminf}. These are related to logarithmic Mahler
measures for certain polynomials, as follows.
\textcolor{red}{(The logarithmic Mahler measure $\mathfrak{m}(p)$ for a complex-coefficient,
$d$-variable polynomial $p$ is defined via}
\begin{align*}
       \mathfrak{m}(p)=\int_{\mathbb{T}^d}\log|p(z_1,z_2,\ldots, z_d)|dz_1dz_2\cdots dz_d
\end{align*}
\textcolor{red}{where the integral is with respect to the (normalized) Lebesgue measure.
For Mahler measures, see for example \cite[p.224]{Schinzel}.)} First, we define
polynomials associated with each substitution
$\rho_{1},\rho_{2},\ldots, \rho_{m_{a}}$.
\begin{defi}
       For each $i=1,2,\ldots, m_{a}$ and $k,l=1,2$, we set
       \begin{align*}
              S^{(i)}_{k,l}=T^{(i)}_{k,1}\cap T^{(i)}_{l,2}.
       \end{align*}
       In other words, $S^{(i)}_{k,l}$ is the set of all places in
       $F_i=\phi_i([0,1)^d)\cap\mathbb{Z}^d$ where
       in the image of $\phi_i(T_1)$ there is a $T_k$ and in the image of
       $\phi_i(T_2)$ there is a $T_l$.
       
       Define polynomials $q^{(i)}_{k,l}$ (for $i=1,2,\ldots, m_{a}$ and $k,l=1,2$) as follows.
       First set
       \begin{align*}
              z^f=(z_1^{f_1},z_2^{f_2},\ldots, z_d^{f_d})
       \end{align*}
       for $z=(z_1,z_2,\ldots, z_d)\in\mathbb{T}^d$ and $f=(f_1,f_2,\ldots, f_d)\in\mathbb{Z}^d$.
       Next, set
       \begin{align*}
             q^{(i)}_{k,l}(z)=\sum_{f\in S^{(i)}_{k,l}}z^{f}
       \end{align*}
       for each $z\in\mathbb{T}^d$.
\end{defi}

\begin{ex}
      If $\rho_{i}$ is the Thue--Morse substitution,
      \textcolor{red}{if we write it symbolically,} it sends $1$ to $12$ and $2$ to $21$. 
      We have
      \begin{align*}
               S^{(i)}_{1,1}=\emptyset, \quad S^{(i)}_{1,2}=\{0\}, \quad
               S^{(i)}_{2,1}=\{1\}, \quad S^{(i)}_{2,2}=\emptyset,
      \end{align*}
      and the polynomials are
      \begin{align*}
              q^{(i)}_{1,1}(z)=0,\quad q^{(i)}_{1,2}(z)=1,\quad q^{(i)}_{2,1}(z)=z,\quad q^{(i)}_{2,2}(z)=0.
      \end{align*}
\end{ex}

\begin{rem}
    It follows that for each $i\in\{1,2,\ldots, m_{a}\}$, we have
    \begin{align*}
           C^{(i)}(z)=
           \begin{pmatrix}
                  q^{(i)}_{1,1}(z)+q^{(i)}_{1,2}(z), & q^{(i)}_{1,1}(z)+q^{(i)}_{2,1}(z)\\
                  q^{(i)}_{2,1}(z)+q^{(i)}_{2,2}(z), & q^{(i)}_{1,2}(z)+q^{(i)}_{2,2}(z)
           \end{pmatrix}.
    \end{align*}
\end{rem}

Then, we obtain the following results, Lemma
\ref{lem_eigenvector_for_C}, Lemma~\ref{lem_whatis_lambda+-} and Lemma
\ref{lem_hyouka_mahlermeasure}, which are modifications of results from
\cite{Manibo_binary, Manibo_thesis}.

\begin{lem}\label{lem_eigenvector_for_C}
        For each\/ $i$ and\/ $z\in\T^d$, we have
     \begin{align*}
       \det C^{(i)}(z)=
       \bigl(q^{(i)}_{1,2}(z)-q^{(i)}_{2,1}(z)\bigr)\sum_{f\in F_i}z^f.
    \end{align*}
    Moreover, for each\/ $i$ and\/ $z$, the vector\/ $(1,-1)^{\top}$
    is an eigenvector of\/ $C^{(i)}(z)$ with eigenvalue\/
    $q^{(i)}_{1,2}(z)-q^{(i)}_{2,1}(z)$.
\end{lem}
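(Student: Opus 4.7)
The plan is to read off $C^{(i)}(z)$ directly from the preceding Remark and then carry out two elementary $2\times 2$ computations. To keep the bookkeeping light, write $a=q^{(i)}_{1,1}(z)$, $b=q^{(i)}_{1,2}(z)$, $c=q^{(i)}_{2,1}(z)$, $d=q^{(i)}_{2,2}(z)$, so that
\begin{align*}
C^{(i)}(z) = \begin{pmatrix} a+b & a+c \\ c+d & b+d \end{pmatrix}.
\end{align*}

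First I would dispatch the eigenvector claim by direct multiplication: a one-line check gives $C^{(i)}(z)(1,-1)^{\top} = ((a+b)-(a+c),\,(c+d)-(b+d))^{\top} = (b-c)(1,-1)^{\top}$, which already identifies $b-c=q^{(i)}_{1,2}(z)-q^{(i)}_{2,1}(z)$ as one eigenvalue.

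For the determinant, the key combinatorial step is to establish that the four sets $S^{(i)}_{k,l}$, $k,l\in\{1,2\}$, form a partition of $F_i=\phi_i([0,1)^d)\cap\mathbb{Z}^d$. This is where the block-substitution hypothesis enters: for each $f\in F_i$ there is a unique tile at position $f$ inside $\phi_i(T_1)$ and a unique tile at position $f$ inside $\phi_i(T_2)$, so exactly one pair $(k,l)$ satisfies $f\in T^{(i)}_{k,1}\cap T^{(i)}_{l,2}=S^{(i)}_{k,l}$. Summing the generating polynomials over this partition gives the identity $a+b+c+d=\sum_{f\in F_i}z^f$.

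I would then close the argument via trace--determinant: since $b-c$ is one eigenvalue, the other equals $\mathrm{tr}(C^{(i)}(z))-(b-c)=(a+b)+(b+d)-(b-c)=a+b+c+d$, so $\det C^{(i)}(z)=(b-c)(a+b+c+d)$, which is the claimed factorization. (Equivalently, one may expand $(a+b)(b+d)-(a+c)(c+d)$ directly; the $ad$ terms cancel and the remainder regroups as $(b-c)(a+b+c+d)$.) I expect no real obstacle here; the whole lemma is a short exercise in $2\times 2$ linear algebra. The only point deserving a sentence of explanation is the partition identity for $F_i$, which is the precise place where the block structure of the substitutions is invoked.
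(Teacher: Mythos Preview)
Your argument is correct. The paper states this lemma without proof, merely noting that it (together with the two lemmas that follow) is a modification of results from \cite{Manibo_binary, Manibo_thesis}; your explicit $2\times 2$ computation, together with the observation that the sets $S^{(i)}_{k,l}$ partition $F_i$, is exactly the natural verification and fills in what the paper leaves to the reader.
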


Note that the assumption (Setting \ref{setting_binary_const_length})
of non-singularity of the $B^{(i)}(t)$, and hence of the $C^{(i)}(z)$,
implies that, for all $i$, the polynomials
$q^{(i)}_{1,2}(z)-q^{(i)}_{2,1}(z)$ are not zero.

To analyze $\chi_{+}$ and $\chi_{-}$, we first prove the following.
\begin{lem}\label{lem_whatis_lambda+-}
 We have\/ $\chi_{+}=0$ and\/ 
             $\chi_{-}=-\sum_{i=1}^{m_{a}}\mu(E_{i})\,\mathfrak{m}(q^{(i)}_{1,2}-q^{(i)}_{2,1})$.
\end{lem}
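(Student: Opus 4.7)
The plan is to exploit the observation (from Lemma~\ref{lem_eigenvector_for_C}) that $v_{0} := (1,-1)^{\top}$ is a common eigenvector of every matrix $C^{(i)}(z)$, with eigenvalue $q^{(i)}_{1,2}(z) - q^{(i)}_{2,1}(z)$, and hence also of every $C^{(i)}(z)^{-1}$ with the reciprocal eigenvalue. Consequently, setting $\alpha(z,x) := [q^{(i)}_{1,2}(z) - q^{(i)}_{2,1}(z)]^{-1}$ whenever $x \in f^{-1}(E_{i})$ (well-defined $(\mu_{\mathrm{L}}\times\nu)$-a.e.\ by the non-singularity hypothesis of Setting~\ref{setting_binary_const_length}), the cocycle acts on $v_{0}$ along $R$-orbits by pure scaling: $C_{n}(\omega) v_{0} = \bigl(\prod_{k=0}^{n-1}\alpha(R^{k}\omega)\bigr)v_{0}$ for a.e.\ $\omega = (z,x)$.

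Next I would apply Birkhoff's ergodic theorem to the ergodic system $(\T^{d}\times Y, \mu_{\mathrm{L}}\times\nu, R)$ from Lemma~\ref{lem_ergodicity_R}, with the integrand $\log|\alpha|$, which lies in $L^{1}$ because $\log|p|$ is integrable on $\T^{d}$ for any nonzero Laurent polynomial $p$ (the Schinzel fact already used in the paper). Since $f$ is a factor map, $\nu(f^{-1}(E_{i}))=\mu(E_{i})$, and we obtain, for a.e.\ $\omega$,
\begin{align*}
 \lim_{n\to\infty}\tfrac{1}{n}\log\|C_{n}(\omega)v_{0}\|
 \;=\; \int \log|\alpha|\, d(\mu_{\mathrm{L}}\times\nu)
 \;=\; -\sum_{i=1}^{m_{a}} \mu(E_{i})\,\mathfrak{m}(q^{(i)}_{1,2}-q^{(i)}_{2,1}).
\end{align*}
By Proposition~\ref{FK_and_Oseledets_thm}, this limit equals either $\chi_{+}$ or $\chi_{-}$.

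To pin down both exponents, I would compute $\chi_{+}+\chi_{-}$ via equation~\eqref{eq_det_exponents}. The determinant formula from Lemma~\ref{lem_eigenvector_for_C} together with another application of Birkhoff gives
\begin{align*}
 \chi_{+}+\chi_{-}
 \;=\; -\sum_{i}\mu(E_{i})\Bigl(\mathfrak{m}(q^{(i)}_{1,2}-q^{(i)}_{2,1}) + \mathfrak{m}\bigl({\textstyle\sum_{f\in F_{i}}} z^{f}\bigr)\Bigr).
\end{align*}
The crucial reduction is that the second Mahler measure vanishes: under the block-substitution hypothesis, $\phi_{i}$ is diagonal with integer entries $n_{i,1},\ldots,n_{i,d}$, so $\sum_{f\in F_{i}} z^{f} = \prod_{k=1}^{d}\bigl(1+z_{k}+\cdots+z_{k}^{n_{i,k}-1}\bigr)$, and each univariate factor has all roots on the unit circle.

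Finally, because $q^{(i)}_{1,2}-q^{(i)}_{2,1}$ is a nonzero polynomial with integer coefficients, its (multivariate) Mahler measure is $\geq 0$, so the Lyapunov exponent along $v_{0}$ computed above is $\leq 0$. Combined with $\chi_{+}\geq\chi_{-}$ and with the fact that $\chi_{+}+\chi_{-}$ equals precisely that exponent, a short case check (if the exponent along $v_{0}$ is $\chi_{+}$, then $\chi_{-}=0$ and $\chi_{+}\geq\chi_{-}=0\geq\chi_{+}$ forces both exponents to be $0$; otherwise directly $\chi_{+}=0$) yields $\chi_{+}=0$ and $\chi_{-}=-\sum_{i}\mu(E_{i})\,\mathfrak{m}(q^{(i)}_{1,2}-q^{(i)}_{2,1})$. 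The main technical point is the vanishing of $\mathfrak{m}(\sum_{f\in F_{i}} z^{f})$, which is exactly where the block-substitution hypothesis plays a crucial role; without the diagonal integer expansion the digit polynomial would not factor into cyclotomic-type pieces and $\chi_{+}$ could fail to vanish.
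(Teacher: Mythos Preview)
Your proposal is correct and follows essentially the same route as the paper's proof: both compute $\chi_{+}+\chi_{-}$ via Birkhoff applied to $\log|\det C|$ using Lemma~\ref{lem_eigenvector_for_C}, both identify the Lyapunov exponent along the common eigenvector $(1,-1)^{\top}$ via Birkhoff, and both finish by combining $\chi_{+}\geq\chi_{-}$ with the nonnegativity of $\mathfrak{m}(q^{(i)}_{1,2}-q^{(i)}_{2,1})$. The only cosmetic differences are that the paper invokes Jensen's formula directly for $\mathfrak{m}\bigl(\sum_{f\in F_{i}}z^{f}\bigr)=0$ whereas you factor the polynomial into univariate cyclotomic-type pieces, and that you spell out the final case analysis while the paper leaves it implicit.
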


\begin{proof}
          By equation \eqref{eq_det_exponents}, Lemma \ref{lem_ergodicity_R} and
          Birkhoff's ergodic theorem, for almost all $\omega\in\T^d\times Y$, we have
          \begin{align}
                   \chi_{+}+\chi_{-}
                   &=\int\log|\det C|\, d\mu_{\mathrm{L}}\times\nu\nonumber\\
                   &=-\sum_{i=1}^{m_{a}}\mu(E_{i})\, \mathfrak{m}(q^{(i)}_{1,2}-q^{(i)}_{2,1})\label{eq_for_lambda+-}
          \end{align}
          Note that we used the fact that the logarithmic Mahler measure for \textcolor{red}{
          $\sum_{f\in F_i}z^f$ vanishes by Jensen's formula \cite[p.207]{Ahlfors}.}
          
          On the other hand, by Lemma \ref{lem_eigenvector_for_C}, 
          setting 
          \begin{align*}
                v=\begin{pmatrix}1\\-1\end{pmatrix}
          \end{align*}
          we have
          \begin{align*}
                 \frac{1}{n}\log\big\| C_n(z,x)v\|
              \rightarrow-\sum_{i=1}^{m_{a}}\mu(E_{i})\,\mathfrak{m}(q^{(i)}_{1,2}-q^{(i)}_{2,1})
          \end{align*}
           again by Birkhoff's
          ergodic theorem.  By Proposition \ref{FK_and_Oseledets_thm},
          the last value is either $\chi_{+}$ or $\chi_{-}$.  Since
          $\chi_{+}\geqq\chi_{-}$ and
          $\mathfrak{m}(q^{(i)}_{1,2}-q^{(i)}_{2,1})\geqq 0$ \textcolor{red}{by
          \cite[p.228, Lemma 3]{Schinzel}, }by using equation
          \eqref{eq_for_lambda+-}, we have the claim.
\end{proof}

\begin{lem}\label{lem_hyouka_mahlermeasure}
        For each\/ $i=1,2,\ldots, m_{a}$, we have
        \begin{align*}
                   \mathfrak{m}(q^{(i)}_{1,2}-q^{(i)}_{2,1})<\log\sqrt{\det\phi_i}.
        \end{align*}
\end{lem}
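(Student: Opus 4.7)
My plan is to reduce the bound to an application of Jensen's inequality together with Parseval's identity, and then to dispose of the resulting boundary case by a rigidity argument for Laurent polynomials of constant modulus on the torus.

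Set $p = q^{(i)}_{1,2} - q^{(i)}_{2,1}$. By Lemma \ref{lem_eigenvector_for_C} combined with the non-singularity assumption in Setting \ref{setting_binary_const_length}, $p$ is a nonzero Laurent polynomial whose nonzero coefficients are all $\pm 1$ and supported on the disjoint union $S^{(i)}_{1,2} \sqcup S^{(i)}_{2,1} \subset F_i$ (the $+1$'s on $S^{(i)}_{1,2}$, the $-1$'s on $S^{(i)}_{2,1}$, with no cancellation). Since the four sets $S^{(i)}_{k,l}$, $k,l \in \{1,2\}$, partition $F_i$, I have the cardinality bound $|S^{(i)}_{1,2}| + |S^{(i)}_{2,1}| \leqq |F_i| = \det\phi_i$, with equality precisely when $S^{(i)}_{1,1} = S^{(i)}_{2,2} = \emptyset$.

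The first step is to apply Jensen's inequality on the probability space $(\mathbb{T}^d, \mu_{\mathrm{L}})$ together with Parseval's identity to obtain
\begin{align*}
\mathfrak{m}(p) = \frac{1}{2}\int_{\mathbb{T}^d} \log|p|^2 \, d\mu_{\mathrm{L}} \leqq \frac{1}{2}\log\int_{\mathbb{T}^d}|p|^2\, d\mu_{\mathrm{L}} = \frac{1}{2}\log\bigl(|S^{(i)}_{1,2}| + |S^{(i)}_{2,1}|\bigr).
\end{align*}
This already yields $\mathfrak{m}(p) \leqq \log\sqrt{\det\phi_i}$. If $|S^{(i)}_{1,2}| + |S^{(i)}_{2,1}| < \det\phi_i$, the strict inequality is immediate and the lemma follows at once.

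The hard part is the boundary case $|S^{(i)}_{1,2}| + |S^{(i)}_{2,1}| = \det\phi_i$, in which Jensen's bound alone only gives a non-strict inequality. I would argue by contradiction: assuming $\mathfrak{m}(p) = \log\sqrt{\det\phi_i}$, equality in Jensen forces $|p|^2$ to be a.e.\ constant on $\mathbb{T}^d$, and hence constant on all of $\mathbb{T}^d$ by continuity. I will then invoke the classical rigidity fact that a nonzero Laurent polynomial in $d$ variables with constant modulus on $\mathbb{T}^d$ must be a scalar times a single monomial: in one variable, $p(z)\,\overline{p(1/\bar{z})} \equiv c$ extends to the rational identity $p(z)\,p^{*}(z^{-1}) \equiv c$, which forces $p$ to be a monomial by a degree count; the multivariable case follows by induction, freezing all but one variable on $\mathbb{T}^{d-1}$ and applying the one-variable result. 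Since in the boundary case $p$ has exactly $\det\phi_i = a_1 a_2 \cdots a_d \geqq 2^d \geqq 2$ nonzero terms (each diagonal entry of $\phi_i$ being at least $2$), $p$ is not a monomial, which is the required contradiction. I expect this rigidity step to be the main---though classical---obstacle; the remainder reduces to a direct computation.
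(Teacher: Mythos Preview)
Your argument is correct and follows essentially the same route as the paper: Jensen's inequality on the torus combined with Parseval to get $\mathfrak{m}(p)\leqq\tfrac{1}{2}\log\lVert p\rVert_2^2\leqq\log\sqrt{\det\phi_i}$, followed by the rigidity fact that a Laurent polynomial of constant modulus on $\mathbb{T}^d$ is a monomial to dispose of the equality case. The only cosmetic difference is the case split: the paper splits directly on ``$p$ is a monomial'' versus ``$p$ is not a monomial'' (in the first case $\mathfrak{m}(p)=0<\log\sqrt{\det\phi_i}$; in the second, Jensen is strict), whereas you split on whether the support of $p$ fills all of $F_i$, which forces you to invoke the monomial rigidity only in the boundary subcase. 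Both work; the paper's split is marginally cleaner since it avoids ever considering the size of the support.
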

\begin{proof}
      By Jensen's inequality and H\"older's inequality, for each $i$,
     \begin{align*}
       \exp\mathfrak{m}(q^{(i)}_{1,2}-q^{(i)}_{2,1})
             \leqq \sqrt{\det\phi_i}.
     \end{align*}
        \textcolor{red}{If the polynomial $q^{(i)}_{1,2}-q^{(i)}_{2,1}$ is not a monomial, its absolute value is
          not a constant function and so the Jensen's inequality is strict.
        If the polynomial
        $q^{(i)}_{1,2}-q^{(i)}_{2,1}$ is a monomial we have also a strict inequality.}
\end{proof}

We now prove the main result of this section. We consider S-adic tilings belonging to
a directive sequence $i_{1},i_{2},\ldots$ that is obtained by a coding of $S_0$ starting at
some $x\in X$. In other words, $i_{1},i_{2},\ldots$ is the sequence of $\{1,2,\ldots, m_{a}\}$
that satisfies
\begin{align*}
         S_0^{n-1}(x)\in E_{i_{n}}
\end{align*}
for each $n=1,2,\ldots$.

\begin{thm}\label{thm_absence_for_binary}
\textcolor{red}{
      Suppose that the substitution matrix $A_i$ for each $\rho_i$ has only strictly positive entries, or more generally,}
             suppose that
       \begin{enumerate}
       \item there are\/ $i_{0,1},i_{0,2},\ldots, i_{0,n_{0}}\in\{1,2,\ldots, m_{a}\}$ such that
                the substitution matrix
                \begin{align*}
                       A_{i_{0,1}}A_{i_{0,2}}\cdots A_{i_{0,n_{0}}}
                \end{align*}
                for\/ $\rho_{i_{0,1}}\circ\rho_{i_{0,2}}\cdots\circ\rho_{i_{0,n_{0}}}$
                has only strictly positive entries, and that
        \item 
        \begin{align}
         \mu(\bigcap_{j=1}^{n_{0}}S_{0}^{-(j-1)}E_{i_{0,j}})>0\label{positive_measure_for_frequency}.
        \end{align}
       \end{enumerate}
      Then, there is a set\/ $E\in\mathcal{B}$ of full measure such
      that, if we take\/ $x\in E$ and define a directive sequence\/
      $i_{1},i_{2}\cdots$ as the coding of\/ $S_0$ starting at\/ $x$
      with respect to the decomposition\/ $X=\bigcup E_{i}$, and if\/
      $\mathcal{T}$ is an S-adic tiling which belongs to this
      directive sequence for the family $\{\rho_1,\rho_2,\ldots,\rho_{m_a}\}$, then\/ $\mathcal{T}$ has zero absolutely
      continuous diffraction spectrum.
\end{thm}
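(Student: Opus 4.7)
The plan is to verify hypothesis \eqref{eq_liminf} of Theorem \ref{thm_liminf_implies_zero_acpart} for the directive sequence obtained by coding $\mu$-almost every $x\in X$, combining Birkhoff's theorem on $(X,\mu,S_0)$ with the Furstenberg--Kesten/Oseledets analysis already performed for the cocycle $C$ over $(\T^d\times Y, R, \mu_{\mathrm L}\times\nu)$.

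First I would check that Setting \ref{setting_relation_exponents_absconti-spec} applies, i.e.\ that the patch frequencies converge, by verifying the hypotheses of Theorem \ref{thm_S-adic_patch-freq}. Condition (1) is exactly hypothesis (1). Condition (4) (van Hove) is automatic because $\phi_{(n)}([0,1]^d)$ is a rectangle whose side lengths all tend to infinity. Condition (3) (nonzero rows) is built into the block structure combined with hypothesis (1). Finally, condition (2) follows from hypothesis (2) via Birkhoff applied to the indicator of $E^{*}=\bigcap_{j=1}^{n_0}S_0^{-(j-1)}E_{i_{0,j}}$: since $\mu(E^{*})>0$, for $\mu$-a.e.\ $x$ the orbit $(S_0^n x)_n$ visits $E^{*}$ infinitely often.

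Next I would evaluate the liminf. Birkhoff applied to the map $x\mapsto\log\det\phi_{i(x)}$ (where $i(x)$ is the unique index with $x\in E_{i(x)}$) yields
\[
\frac{1}{k}\log\lambda^{(k)}\longrightarrow\sum_{i=1}^{m_a}\mu(E_i)\log\det\phi_i
\]
for $\mu$-a.e.\ $x$. Since each $\phi_i$ is an integer diagonal matrix, $\phi_i^{*}=\phi_i$ and each $B^{(i)}$ is $\mathbb Z^d$-periodic; unwinding the definitions of $R$ and $C_n$ then gives the identity $\|\overline{B}^{(k)}(t)\|=\|C_k(\pi(t),y)^{-1}\|$ for any $y\in f^{-1}(x)$, off a null set where some $C^{(i)}$ is singular. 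Proposition \ref{FK_and_Oseledets_thm} combined with Lemma \ref{lem_whatis_lambda+-} then yields
\[
\frac{1}{k}\log\|\overline{B}^{(k)}(t)\|\longrightarrow -\chi_{-}=\sum_{i=1}^{m_a}\mu(E_i)\,\mathfrak m(q^{(i)}_{1,2}-q^{(i)}_{2,1})
\]
for $(\mu_{\mathrm L}\times\nu)$-a.e.\ $(\pi(t),y)\in\T^d\times Y$. Since this limit depends only on $(\pi(t),f(y))$, the exceptional null set descends via $\mathrm{id}\times f$ to a $(\mu_{\mathrm L}\times\mu)$-null set in $\T^d\times X$; Fubini and the $\mathbb Z^d$-periodicity in $t$ then produce a $\mu$-full-measure set of $x$ on which the limit holds for $\mu_{\mathrm L}$-a.e.\ $t\in\R^d$.

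Subtracting yields
\[
\liminf_{k}\Bigl(\tfrac{1}{2k}\log\lambda^{(k)}-\tfrac{1}{k}\log\|\overline{B}^{(k)}(t)\|\Bigr)=\sum_{i=1}^{m_a}\mu(E_i)\bigl(\log\sqrt{\det\phi_i}-\mathfrak m(q^{(i)}_{1,2}-q^{(i)}_{2,1})\bigr),
\]
which by Lemma \ref{lem_hyouka_mahlermeasure} is a convex combination with all summands strictly positive; since $\sum_i\mu(E_i)=1$, the right-hand side equals some $\varepsilon>0$. Taking $E$ to be the intersection of the full-measure sets produced above, Theorem \ref{thm_liminf_implies_zero_acpart} concludes the proof. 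The main obstacle I anticipate is the bookkeeping around the factor map $f\colon Y\to X$: one must verify that the Oseledets exceptional set in $\T^d\times Y$ truly factors through $\mathrm{id}\times f$ and so descends to a $\mu$-null subset of $X$ that depends only on the directive sequence coded by $x$, so that the conclusion is independent of the particular tiling $\mathcal T$ belonging to that sequence.
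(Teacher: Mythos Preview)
Your proposal is correct and follows essentially the same route as the paper: verify the patch-frequency hypothesis via Theorem~\ref{thm_S-adic_patch-freq} and Poincar\'e/Birkhoff recurrence to $E^{*}$, compute the two limits in \eqref{eq_liminf} separately (Birkhoff for $\lambda^{(k)}$, Furstenberg--Kesten/Oseledets via Lemma~\ref{lem_whatis_lambda+-} for $\overline{B}^{(k)}$), and conclude by Lemma~\ref{lem_hyouka_mahlermeasure}. One small correction: your justification of condition~(3) of Theorem~\ref{thm_S-adic_patch-freq} (nonzero rows of each $A_i$) should cite the standing non-singularity assumption on the Fourier matrices in Setting~\ref{setting_binary_const_length} rather than ``block structure combined with hypothesis~(1)'', since a zero row in $A_i$ forces a zero row in $B^{(i)}(t)$ for all $t$; your Fubini/descent argument for the Oseledets null set is in fact more explicit than what the paper writes.
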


\begin{proof}
       By Birkhoff's ergodic theorem, for almost all $x\in X$ we have
       \begin{align*}
               \lim_{n\rightarrow\infty}\frac{1}{2n}\log(\det\phi_{i_{1}}\det\phi_{i_{2}}\cdots \det\phi_{i_{n}})=\frac{1}{2}
               \sum_{i=1}^{m_{a}}\mu(E_{i})\log \det\phi_{i}.
       \end{align*}
       (Here, $i_{1},i_{2},\ldots$ are dependent on $x$.)
       By Proposition \ref{FK_and_Oseledets_thm} and
       Lemma \ref{lem_whatis_lambda+-}, for almost all $y\in Y$  and $t\in\R^d$, 
       if $i_{1},i_{2},\ldots$ are codings of $S_0$ starting at $f(y)$, then
       we have
       \begin{align}
              &\lim_{n\rightarrow\infty}\frac{1}{n}\log\big\|B^{(i_{1})}(t)B^{(i_{2})}(\phi_{i_{1}}(t))\cdots
              B^{(i_{n})}(\phi_{i_{n-1}}\circ\phi_{i_{n-2}}\circ\cdots \circ\phi_{i_{1}}(t))\big\|\nonumber\\
              &=\lim_{n\rightarrow\infty}\frac{1}{n}\log\big\|C^{(i_{1})}(\pi(t))C^{(i_{2})}(\pi(\phi_{i_{1}}(t)))\cdots
                   C^{(i_{n})}(\pi(\phi_{i_{n-1}}\phi_{i_{n-2}}\cdots \phi_{i_{1}}(t)))\big\|\nonumber\\
             &=\lim_{n\rightarrow\infty}\frac{1}{n}\log\big\|C(\pi(t),y)^{-1}C(R(\pi(t),y))^{-1}\cdots
                   C(R^{n-1}(\pi(t),y))^{-1}\big\|\nonumber\\
                  & =\sum_{i=1}^{m_{a}}\mu(E_{i})\,\mathfrak{m}(q^{(i)}_{1,2}-q^{(i)}_{2,1}).\label{eq_lyap_expo}
       \end{align}
       By Lemma \ref{lem_hyouka_mahlermeasure}, there is $\e>0$ such that the number
       \eqref{eq_lyap_expo} is equal to
       \begin{align*}
                \frac{1}{2}\sum_{i=1}^{m_{a}}(\mu(E_{i})\log(\det\phi_{i}))-\e.
       \end{align*}
       These computations show that the inequality \eqref{eq_liminf} is satisfied.
       (Note that $\phi_i^*=\phi_i$.)
       
       In Setting \ref{setting_relation_exponents_absconti-spec},
       under which Theorem \ref{thm_liminf_implies_zero_acpart} is
       proved, we assumed the patch frequencies for each
       $\mathcal{T}^{(k)}$ to be convergent. In order to use Theorem
       \ref{thm_liminf_implies_zero_acpart}, we have to prove this
       assumption for the present case.  By
       \eqref{positive_measure_for_frequency}, ergodicity of $S_0$ and
       Poincar\'e recurrence, for almost all $x\in X$, there are
       infinitely many $n\in\Zpo$ such that
       $S_0^{n}(x)\in\bigcap_{j=1}^{n_{0}}S_0^{-(j-1)}E_{i_{0,j}}$.  (We
       used \cite[Theorem 1.5 (iii)]{Walters}.) This means that there
       are infinitely many $n$ such that
      \begin{align*}
              i_{n}i_{n+1}\cdots i_{n+n_{0}-1}=i_{0,1}i_{0,2}\cdots i_{0,n_{0}}.
      \end{align*}
      (These are equal as words.)
      The patch frequencies converge by Theorem \ref{thm_S-adic_patch-freq}.
      
      The statement, the absence of absolutely continuous spectrum, is proved by using
      Theorem \ref{thm_liminf_implies_zero_acpart}.
\end{proof}

\begin{ex}\label{example_absense_acpart_binary}
      Let $\rho_{1}$ be the period-doubling substitution and
      $\rho_{2}$ be the Thue--Morse substitution. Let
      $X=\{1,2\}^{\mathbb{N}}$ and $(X,\mathcal{B},\mu,S_0)$ be the
      Bernoulli shift for some probability vector $(p_{1},p_{2})$ with
      $p_{1}p_{2}\neq 0$.  \textcolor{red}{Then,} for almost all $x\in X$, the S-adic
      tilings belonging to the sequence $x_{1},x_{2},\ldots$ have zero
      absolutely continuous spectrum.  We can replace
      $(X,\mathcal{B},\mu,S_0)$ with an arbitrary surjective ergodic
      measure-preserving system on a standard probability space and
      get the same conclusion.  For example, we take some Sturmian sequences
      as a directive sequence and obtain the same conclusion, by replacing
      $(X,\mathcal{B},\mu,S_{0})$ with an irrational rotation on $\mathbb{T}$.
      \textcolor{red}{The same conclusion holds when we replace $\rho_1$ and $\rho_2$ with arbitrary two binary substitution rules
      such that the supports of the prototiles are all $[0,1]$ and the substitution matrices have only strictly positive entries. 
      The expansion factors can be different.}
      
     \textcolor{red}{We can also apply Theorem \ref{thm_absence_for_binary} for higher-dimensional  block substitutions.
      We can replace the above $\rho_1$ and $\rho_2$ with arbitrary two block substitutions in $\R^d$
      with substitution matrices with only strictly positive entries.
      For example, for $d=2$, if $\rho_1,\rho_2$ are ones that were depicted in Figure \ref{fig_block_substi1} and \ref{fig_block_substi2},
      we have the same conclusion as in the first paragraph in this example.}
      
      \textcolor{red}{Moreover, the number of substitutions does not have to be 2.
      Let $\rho_1,\rho_2,\ldots, \rho_{m_a}$ be a finite family of block substitutions in $\R^d$.
      Take a space $\{1,2,\ldots, m_a\}^{\mathbb{N}}$ and endow it the product probability measure for
      the probability measure on $\{1,2,\ldots, m_a\}$ given by a probability vector $(p_1,p_2,\ldots, p_{m_a})$ with
      $p_1p_2\cdots p_{m_a}\neq 0$. 
      We can also relax the assumption on the substitution matrices and the substitution matrices 
      may have 0 as entries, but we assume there are $i_{0,1},i_{0,2},\ldots, i_{0,n_0}\in\{1,2,\ldots, m_a\}$
      such that the product $A_{i_{0,1}}A_{i_{0,2}}\cdots A_{i_{0,n_0}}$ has only strictly positive entries.
      Then, for almost all $x\in\{1,2,\ldots, m_a\}^{\mathbb{N}}$, the S-adic tilings
      constructed by $\rho_1,\rho_2,\ldots, \rho_{m_a}$ belonging to the directive sequence $x$ has
      zero absolutely continuous diffraction spectrum.
      We can also take other subshifts of $\{1,2,\ldots, m_a\}^{\mathbb{N}}$ and obtain the same conclusion.
      In this case, we assume the existence of $i_{0,1}i_{0,2}\cdots i_{0,n_0}$ in the language with the same condition.
      For example, let $X$ be the hull of a $m_a$-letter primitive symbolic substitution. Endow $X$ with a shift-invariant probability 
      measure defined by frequencies. Then, with respect to that measure, for almost all $x\in X$, the S-adic tilings
      belonging to $x$ has zero absolutely continuous diffraction spectrum.
      }
%
\end{ex}

\begin{rem}
      In Theorem \ref{thm_absence_for_binary}, the author could not
      prove the absence of absolutely continuous part for an
      \emph{arbitrary} directive sequence, since this theorem is an
      ``almost sure'' result.  \textcolor{red}{For example, when $m_a=2$, we do not know what happens if we take
      the Fibonacci sequence as a directive sequence.} It would be interesting to decide
      whether this theorem holds for arbitrary directive sequences, or
      whether for some directive sequences there may be non-vanishing
      absolutely continuous components in the diffraction spectrum.
\end{rem}

\section*{Appendix: the ergodicity of $R$ in Section \ref{subsection_absence_acpart}}
Here, we prove the ergodicity of the transformation $R$ in Section
\ref{subsection_absence_acpart}.  In particular, let
$(Y,\mathcal{C},\nu)$ be a probability space and
$Y=\bigcup_{i=1}^{m_{a}}E_{i}'$ be a decomposition of $Y$ into
pairwise disjoint $E_{i}'=f^{-1}(E_{i})\in\mathcal{C}$, for $i=1,2,\ldots, m_{a}$. Let
$S=S_{1}\colon Y\rightarrow Y$ be an invertible, measure-preserving and ergodic
transformation.  
For each $i$, take integers $n^{(i)}_1, n^{(i)}_2, \ldots, n^{(i)}_d$ greater than $1$ and let $\phi_i$ be the linear map
defined by the diagonal matrix with diagonal entries $n^{(i)}_1, n^{(i)}_2,\ldots , n^{(i)}_d$.
Define a transformation $R\colon\mathbb{T}^d\times
Y\rightarrow\mathbb{T}^d\times Y$ via
\begin{align*}
       R(\pi(t),x)=(\pi(\phi_i(t)), S(x)),
\end{align*}
for each $t\in\R^d$, $x\in E_{i}'$ and $i=1,2,\ldots, m_{a}$. We aim to prove the following result,
where\/ $\mu_{\mathrm{L}}$ is the Lebesgue measure on\/ $[0,1)^d$.
\begin{lem}\label{lem_R_ergodic}
        The transformation\/ $R$ is measure preserving and ergodic with respect to
        the product measure $\mu_{\mathrm{L}}\times\nu$.
\end{lem}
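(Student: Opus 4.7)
The plan is to handle measure-preservation by a direct computation on rectangles, and then prove ergodicity by combining a Fourier expansion in the torus fibre with the ergodicity of $S$.

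For measure-preservation, I would decompose $\mathbb{T}^d\times Y = \bigsqcup_{i=1}^{m_a} \mathbb{T}^d\times E_i'$; on the $i$-th piece $R$ is the product $\psi_i\times S$, where $\psi_i\colon\mathbb{T}^d\to\mathbb{T}^d$ is the toral endomorphism induced by the diagonal integer matrix $\phi_i$. Each $\psi_i$ preserves $\mu_{\mathrm{L}}$ (it is a uniformly $(\det\phi_i)$-to-one cover), so for any measurable rectangle $A\times B$,
\begin{align*}
(\mu_{\mathrm{L}}\times\nu)\bigl(R^{-1}(A\times B)\bigr)=\sum_{i=1}^{m_a}\mu_{\mathrm{L}}(\psi_i^{-1}A)\,\nu(E_i'\cap S^{-1}B)=\mu_{\mathrm{L}}(A)\,\nu(B),
\end{align*}
using that the $E_i'$ partition $Y$ and $S$ preserves $\nu$; this extends to the product $\sigma$-algebra by a standard $\pi$-$\lambda$ argument.

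For ergodicity, take $f\in L^2(\mu_{\mathrm{L}}\times\nu)$ with $f\circ R=f$ and expand $f(z,x)=\sum_{n\in\mathbb{Z}^d}c_n(x)z^n$ with $c_n\in L^2(\nu)$. Since $\phi_i$ is diagonal and integer-valued, $\psi_i(z)^n=z^{\phi_i n}$; restricting the invariance identity to $\mathbb{T}^d\times E_i'$ and matching Fourier coefficients yields, for $\nu$-a.e.\ $x\in E_i'$,
\begin{align*}
c_n(x)=\begin{cases} c_{\phi_i^{-1}n}(S(x)) & \text{if } n\in\phi_i(\mathbb{Z}^d),\\ 0 & \text{otherwise}.\end{cases}
\end{align*}
For $n=0$ this reduces to $c_0\circ S=c_0$, so by ergodicity of $S$ the coefficient $c_0$ is $\nu$-essentially constant.

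For $n\neq 0$, I would iterate the relation along the $S$-orbit of $x$: each $\phi_i$ has every diagonal entry $\geqq 2$, so after at most $\lceil\log_2\|n\|_\infty\rceil+1$ iterations the required componentwise divisibility must break down, forcing $c_n(x)=0$. Combined with $c_0$ constant, this shows $f$ is $\mu_{\mathrm{L}}\times\nu$-a.e.\ constant. The main technical nuisance I anticipate is null-set bookkeeping: for each $n\in\mathbb{Z}^d$ the coefficient relation holds off some $\nu$-null set $N_n$, so one must work on $\bigcap_{n\in\mathbb{Z}^d}\bigcap_{k\geqq 0}S^{-k}(Y\setminus N_n)$—still co-null as a countable intersection of $\nu$-co-null sets—on which the finite iteration is simultaneously valid for every relevant $n$.
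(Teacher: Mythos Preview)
Your proof is correct, but the ergodicity argument follows a genuinely different route from the paper's. For measure-preservation the two approaches are essentially identical: decompose along the partition $\{E_i'\}$, use that each toral endomorphism $\psi_i$ preserves Lebesgue measure, and check rectangles. For ergodicity, however, the paper does not use Fourier analysis on the fibre. Instead it proves a mixing-type lemma: for any Borel $E,F\subset[0,1)^d$ one has $\mu_{\mathrm{L}}(T_n^{-1}(E)\cap F)\to\mu_{\mathrm{L}}(E)\mu_{\mathrm{L}}(F)$ as $n\to\infty$ in $\Zpo^d$ (Lemma~\ref{lem_T_n_convergent}), established by an elementary counting argument on dyadic-type boxes. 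It then shows that for rectangles $F_j\times G_j$ the Ces\`aro averages $\tfrac{1}{N}\sum_{n=0}^{N-1}\mu_{\mathrm{L}}\times\nu\bigl(R^{-n}(F_1\times G_1)\cap(F_2\times G_2)\bigr)$ converge to the product of the measures, by splitting over the partition $\{E'_{\mathfrak{i}}\}_{\mathfrak{i}\in\mathfrak{I}(n)}$, applying the mixing lemma on the torus factor, and invoking the Ces\`aro characterisation of ergodicity of $S$ on the $Y$ factor. Your Fourier approach is arguably more direct and exploits the diagonal integer structure of the $\phi_i$ (entries $\geqq 2$) very cleanly; the paper's approach is closer in spirit to the general principle ``strongly mixing fibre over ergodic base is ergodic'' and would adapt more readily if the $\psi_i$ were replaced by other Lebesgue-preserving maps satisfying an asymptotic independence condition. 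Your null-set bookkeeping is adequate: since each $n\neq 0$ terminates in at most $\lceil\log_2\|n\|_\infty\rceil+1$ steps regardless of the orbit coding, the countable intersection you describe suffices.
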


We divide the proof into several lemmas. First, we define a notion as follows.
\begin{defi}
    For each $n=(n_1,n_2,\ldots, n_d) \in\Zpo^d$, define a map $T_{n}$ on $[0,1)^d$ via
    \begin{align*}
           T_{n}(t_1,t_2,\ldots, t_d)=(n_1t_1-\lfloor n_1t_1\rfloor, n_2t_2-\lfloor n_2t_2\rfloor,,\ldots , n_dt_d-\lfloor n_dt_d\rfloor)
    \end{align*}
    where, for each $s\in\R$,  $\lfloor s \rfloor$ denotes the largest integer smaller than
    or equal to $s$.
    
    For each $n=(n_1,n_2,\ldots, n_d) \in\Zpo^d$, set
    \begin{align*}
            \mathscr{I}_{n}=\left\{\left [\frac{k_1}{n_1} ,\frac{k_1+1}{n_1}\right )\times
                                               \left [\frac{k_2}{n_2} ,\frac{k_2+1}{n_2}\right )\times
                                               \cdots\times
                                               \left [\frac{k_d}{n_d} ,\frac{k_d+1}{n_d}\right )
                                               \mathrel{}\middle|\mathrel{}
                                                k_j=0,1,\ldots, n_j-1
                                     \right\}.
    \end{align*}
\end{defi}

Note that for each $I\in\mathscr{I}_{n}$, the restriction
$\maprestriction{T_{n}}{I}\colon I\rightarrow [0,1)$ is bijective and
  expands the Lebesgue measure by $n_1n_2\cdots n_d$.

The set $\Zpo^d$ is a directed set. Any real-valued map with domain $\Zpo^d$ can be regarded as a net
and the limit of such a map makes sense. In other words,  a map $f\colon\Zpo\rightarrow\R$ converges to
an $\alpha\in\R$ if for any $\e>0$, there are $(n_1,n_2,\ldots, n_d)\in\Zpo^d$ such that
$m=(m_1,m_2,\ldots, m_d)\in\Zpo^d$ and $m_1>n_1, m_2>n_2,\ldots , m_d>n_d$ imply
$|f(m)-\alpha|<\e$.

The proof of the following lemma is straightforward.
\begin{lem}\label{lem1_for_ergodicity}
      If\/ $I_{0}\subset [0,1)^d$ is a product of intervals, then we have
      \begin{align*}
             \lim_{n=(n_1,n_2,\ldots, n_d)\rightarrow\infty}\frac{1}{n_1n_2\cdots n_d}\card\{I\in\mathscr{I}_{n}\mid I\subset I_{0}\}=\mu_{\mathrm{L}}(I_{0}).
      \end{align*}
\end{lem}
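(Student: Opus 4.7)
The plan is to reduce the statement to a one-dimensional counting fact by exploiting the product structure of both the partition $\mathscr{I}_n$ and the set $I_0$. Write $I_0 = J_1 \times J_2 \times \cdots \times J_d$ where each $J_j$ is an interval in $[0,1)$. Since an element of $\mathscr{I}_n$ has the form $\prod_{j=1}^d [k_j/n_j, (k_j+1)/n_j)$, the containment $I \subset I_0$ is equivalent to $[k_j/n_j, (k_j+1)/n_j) \subset J_j$ for each coordinate $j$ separately. Consequently, the count factorises:
\begin{align*}
       \card\{I\in\mathscr{I}_n\mid I\subset I_0\}
       =\prod_{j=1}^d\card\bigl\{k_j\in\{0,1,\ldots,n_j-1\}\bigm| [k_j/n_j,(k_j+1)/n_j)\subset J_j\bigr\},
\end{align*}
so after dividing by $n_1n_2\cdots n_d$ we obtain a product of one-dimensional normalised counts.

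Next, I would handle the one-dimensional case. Writing $J_j$ as an interval with endpoints $a_j<b_j$, the condition $[k_j/n_j,(k_j+1)/n_j)\subset J_j$ forces $\lceil n_j a_j\rceil\leqq k_j\leqq \lfloor n_j b_j\rfloor-1$ (with at most an off-by-one adjustment depending on the type of endpoints of $J_j$, which is absorbed in the limit). Therefore
\begin{align*}
      \frac{1}{n_j}\card\bigl\{k_j\bigm| [k_j/n_j,(k_j+1)/n_j)\subset J_j\bigr\}
      =\frac{\lfloor n_jb_j\rfloor-\lceil n_ja_j\rceil+O(1)}{n_j}\xrightarrow[n_j\to\infty]{}b_j-a_j=\mu_{\mathrm{L}}(J_j),
\end{align*}
where the $O(1)$ error is uniform in $n_j$.

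Finally, I would combine the coordinate-wise convergences into the claimed net limit. Since each factor $\frac{1}{n_j}\card\{\cdots\}$ is bounded by $1$ and converges to $\mu_{\mathrm{L}}(J_j)$ as $n_j\to\infty$, the product converges to $\prod_{j=1}^d\mu_{\mathrm{L}}(J_j)=\mu_{\mathrm{L}}(I_0)$ as the multi-index $n=(n_1,\ldots,n_d)$ tends to infinity in the directed set $\Zpo^d$; this is a standard fact about nets, and it is easily verified by writing each factor as $\mu_{\mathrm{L}}(J_j)+\e_j(n_j)$ with $\e_j(n_j)\to 0$ and expanding the product.

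There is no genuine obstacle here: the only mildly delicate point is the bookkeeping of endpoints (open vs.\ closed) in the one-dimensional estimate and the verification that the product of component-wise convergent bounded sequences converges as a net on $\Zpo^d$, both of which are routine.
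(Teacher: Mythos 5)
Your proof is correct; the paper itself omits the argument entirely (it declares the lemma ``straightforward''), and your coordinate-wise factorisation of the count, the one-dimensional endpoint estimate $\lfloor n_jb_j\rfloor-\lceil n_ja_j\rceil+O(1)$, and the passage to the net limit over $\Zpo^d$ constitute exactly the routine verification the author had in mind. The only detail worth a word is the degenerate case where some $J_j$ is empty or a single point, in which both sides vanish trivially, so nothing is lost by assuming $a_j<b_j$.
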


By using Lemma \ref{lem1_for_ergodicity}, we prove the following lemma.
\begin{lem}\label{lem_T_n_convergent}
       For all Borel subsets\/ $E,F$ of\/ $[0,1)$, we have
       \begin{align*}
            \lim_{n\rightarrow\infty} \mu_{\mathrm{L}}(T_{n}^{-1}(E)\cap F)=\mu_{\mathrm{L}}(E)\,\mu_{\mathrm{L}}(F).
       \end{align*}
\end{lem}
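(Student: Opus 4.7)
The plan is to reduce the assertion to the case where both $E$ and $F$ are products of half-open subintervals of $[0,1)^d$, where it follows from Lemma \ref{lem1_for_ergodicity} by a direct computation, and then to propagate the identity to all Borel sets by a $\pi$-$\lambda$ argument.

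The key observation is that for each cell $I = \prod_{j=1}^{d}[k_j/n_j,(k_j+1)/n_j) \in \mathscr{I}_n$, the restriction $T_n|_I \colon I \to [0,1)^d$ is a bijection whose Jacobian is the constant diagonal matrix with entries $n_1, n_2, \ldots, n_d$. Consequently, for any Borel $E \subset [0,1)^d$,
\begin{align*}
\mu_{\mathrm{L}}(T_n^{-1}(E) \cap I) \;=\; \frac{\mu_{\mathrm{L}}(E)}{n_1 n_2 \cdots n_d} \;=\; \mu_{\mathrm{L}}(E)\,\mu_{\mathrm{L}}(I).
\end{align*}
Setting $U_n(F) := \bigcup\{I \in \mathscr{I}_n : I \subset F\}$ and $V_n(F) := \bigcup\{I \in \mathscr{I}_n : I \cap F \neq \emptyset\}$, additivity over the (disjoint) cells yields
\begin{align*}
\mu_{\mathrm{L}}(T_n^{-1}(E) \cap U_n(F)) \;=\; \mu_{\mathrm{L}}(E)\,\mu_{\mathrm{L}}(U_n(F)),
\end{align*}
and the same identity with $U_n(F)$ replaced by $V_n(F)$. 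Since $U_n(F) \subset F \subset V_n(F)$, the quantity $\mu_{\mathrm{L}}(T_n^{-1}(E)\cap F)$ is sandwiched between these, so it suffices to show $\mu_{\mathrm{L}}(V_n(F)\setminus U_n(F))\to 0$. When $F$ is itself a product of intervals, this follows at once from Lemma \ref{lem1_for_ergodicity} applied both to $F$ and to a family of disjoint product-of-intervals covering $[0,1)^d\setminus F$ up to its boundary, which has Lebesgue measure zero.

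To extend to arbitrary Borel $F$, I would fix a product-of-intervals $E$ and consider
\begin{align*}
\mathcal{D}_E := \bigl\{F \subset [0,1)^d \text{ Borel} : \lim_{n\to\infty} \mu_{\mathrm{L}}(T_n^{-1}(E) \cap F) = \mu_{\mathrm{L}}(E)\,\mu_{\mathrm{L}}(F)\bigr\}.
\end{align*}
Because $\mu_{\mathrm{L}}(T_n^{-1}(E)) = \mu_{\mathrm{L}}(E)$ (from summing the above identity over all cells of $\mathscr{I}_n$), $\mathcal{D}_E$ is closed under complementation; a routine $\varepsilon$-approximation shows it is also closed under countable monotone limits. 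Thus $\mathcal{D}_E$ is a Dynkin class containing the $\pi$-system of products of intervals, so by Dynkin's theorem $\mathcal{D}_E$ is the full Borel $\sigma$-algebra of $[0,1)^d$. A symmetric argument, now varying $E$ with $F$ fixed, then extends the identity to arbitrary Borel $E$.

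The main technical point is controlling the boundary cells $V_n(F) \setminus U_n(F)$ in the base case, which is precisely what Lemma \ref{lem1_for_ergodicity} delivers for products of intervals; everything else is standard measure-theoretic bookkeeping. I note that the hypothesis $n_j > 1$ for all $j$ is essential only in the subsequent application (to ensure genuine mixing when combined with $S_1$), whereas the conclusion of this lemma holds already with the asymptotic along the directed set $\mathbb{Z}_{>0}^d$.
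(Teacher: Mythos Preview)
Your argument is correct and follows essentially the same line as the paper's: decompose $[0,1)^d$ into the cells of $\mathscr{I}_n$, use that $T_n$ restricted to each cell is a measure-scaling bijection, and control the boundary cells (those meeting but not contained in $F$) via Lemma~\ref{lem1_for_ergodicity}. The paper handles the passage to general Borel $E,F$ by approximating with finite disjoint unions of boxes, while you use a $\pi$--$\lambda$ argument; these are interchangeable here.

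One small redundancy worth noting: your key observation $\mu_{\mathrm{L}}(T_n^{-1}(E)\cap I)=\mu_{\mathrm{L}}(E)\,\mu_{\mathrm{L}}(I)$ already holds for \emph{every} Borel $E$ (it is just the change-of-variables formula on the cell $I$), so once your Dynkin argument has extended the identity to all Borel $F$ you are done---the final ``symmetric argument, now varying $E$'' is unnecessary. Also, your boundary-cell estimate via Lemma~\ref{lem1_for_ergodicity} applied to a decomposition of $[0,1)^d\setminus F$ is correct but slightly indirect; the paper simply counts that at most $2\sum_{j}\prod_{i\neq j}n_i$ cells can meet $\partial F$, giving total measure $\leqq 2\sum_j 1/n_j\to 0$.
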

\begin{proof}
        It suffices to prove the claim for the case where $E$ and $F$
        are products of intervals, since any general $E$ and $F$ are approximated
        by finite disjoint unions of products of intervals.  Assume that $E$ and $F$
        are products of  intervals.  For each $n\in\Zpo^d$ and $I\in\mathscr{I}_{n}$, we
        have either (1) $I\subset F$, (2) $I\cap F\neq\emptyset$ and
        $I\not\subset F$, or (3) $I\cap F=\emptyset$.  In case (1),
        we have
        \begin{align*}
                \mu_{\mathrm{L}}\bigl(\maprestriction{T_{n}}{I}^{-1}(E)\cap F\bigr)=\frac{1}{n}\mu_{\mathrm{L}}(E).
        \end{align*}
        For each $n$, the number of $I$ in $\mathscr{I}_{n}$ with case (2) is at most
        \begin{align*}
               2\sum_{j=1}^dn_1n_2\cdots n_{j-1}n_{j+1}\cdots n_d.
        \end{align*}
         For each such $I$, the measure
        $\mu_{\mathrm{L}}(\maprestriction{T_{n}}{I}^{-1}(E)\cap F)$ is at most $\frac{1}{n_1n_2\cdots n_d}$.
        Finally for $I$ with case (3), we have
        $\maprestriction{T_{n}}{I}^{-1}(E)\cap F=\emptyset$.
        
        By using these observations and Lemma \ref{lem1_for_ergodicity}, we see that
        \begin{align*}
              \lim_{n\rightarrow\infty}\mu_{\mathrm{L}}(T_{n}^{-1}(E)\cap F)&=
              \lim\sum_{I\in\mathscr{I}_{n}, I\subset F}\mu_{\mathrm{L}}\bigl(\maprestriction{T_{n}}{I}^{-1}(E)\cap F\bigr)\\
              &=\mu_{\mathrm{L}}(E)\,\mu_{\mathrm{L}}(F).\qedhere
        \end{align*}
 \end{proof}

\begin{defi}
      For each $n\in\Zpo$, define $\mathfrak{I}(n)=\{1,2,\ldots, m_{a}\}^{n}$.
      For each element $\mathfrak{i}=(i_{1},i_{2},\ldots, i_{n})\in\mathfrak{I}(n)$, set
      \begin{align*}
           E'_{\mathfrak{i}}=S^{n}(E'_{i_{1}})\cap S^{n-1}(E'_{i_{2}})\cap\cdots\cap S(E'_{i_{n}}).
      \end{align*}
      We also define a map $T_{\mathfrak{i}}\colon\mathbb{T}^d\rightarrow\mathbb{T}^d$
      for $\mathfrak{i}=(i_{1},i_{2},\ldots, i_{n})$, via
      \begin{align*}
        T_{\mathfrak{i}}(\pi(t))=\pi(\phi_{i_{n}}\circ\phi_{i_{n-1}}\circ\cdots \circ\phi_{i_{1}}(t))
      \end{align*}
      for each $t\in\R^d$. 
\end{defi}

\begin{lem}\label{lem2_for_ergodicity}
       Let\/ $n\in\Zpo$ and\/ $\mathfrak{i}\in\mathfrak{I}(n)$.
       For a Borel subset\/ $F\subset\mathbb{T}^d$ and\/ $E\in\mathcal{C}$ with\/
       $E\subset E'_{\mathfrak{i}}$,
       we have
       \begin{align*}
               R^{-n}(F\times E)=(T_{\mathfrak{i}}\times S^{n})^{-1}(F\times E).
       \end{align*}
\end{lem}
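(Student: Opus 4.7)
The plan is to unwind $R^{n}$ iteratively, using the condition $E\subset E'_{\mathfrak{i}}$ to pin down which expansion map $\phi_{i_{j}}$ is applied at each iterate. Concretely, the definition of $R$ shows that
\begin{align*}
   R^{n}(\pi(t),x)=\bigl(\pi(\phi_{k_{n}}\circ\phi_{k_{n-1}}\circ\cdots\circ\phi_{k_{1}}(t)),\,S^{n}(x)\bigr),
\end{align*}
where $k_{j}\in\{1,2,\ldots,m_{a}\}$ is the unique index with $S^{j-1}(x)\in E'_{k_{j}}$. So to prove the lemma it suffices to show that, whenever $S^{n}(x)\in E\subset E'_{\mathfrak{i}}$, one has $k_{j}=i_{j}$ for every $j=1,2,\ldots,n$, because then the first coordinate of $R^{n}(\pi(t),x)$ becomes exactly $T_{\mathfrak{i}}(\pi(t))$ by the very definition of $T_{\mathfrak{i}}$.

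The key observation is therefore the following: if $y=S^{n}(x)\in E'_{\mathfrak{i}}=S^{n}(E'_{i_{1}})\cap S^{n-1}(E'_{i_{2}})\cap\cdots\cap S(E'_{i_{n}})$, then $y\in S^{n-j+1}(E'_{i_{j}})$ for each $j$, and applying $S^{-(n-j+1)}$ (here we use that $S$ is invertible, as guaranteed by passing to the natural extension) gives $S^{j-1}(x)=S^{-(n-j+1)}(y)\in E'_{i_{j}}$. Since the $E'_{i}$ are pairwise disjoint, this forces $k_{j}=i_{j}$, as required.

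With this observation in hand, the argument is a routine equivalence of conditions. For any $(\pi(t),x)\in\mathbb{T}^{d}\times Y$ with $S^{n}(x)\in E$, the above computation gives
\begin{align*}
   R^{n}(\pi(t),x)=\bigl(T_{\mathfrak{i}}(\pi(t)),\,S^{n}(x)\bigr),
\end{align*}
so $R^{n}(\pi(t),x)\in F\times E$ holds if and only if $T_{\mathfrak{i}}(\pi(t))\in F$ and $S^{n}(x)\in E$, which is exactly the condition $(\pi(t),x)\in(T_{\mathfrak{i}}\times S^{n})^{-1}(F\times E)$. Both inclusions in the claimed identity follow at once. The only subtle point, and the one worth singling out, is the bookkeeping step identifying $k_{j}$ with $i_{j}$; once this is established, no further work is needed, and nothing about the measure-theoretic structure of $F$, $E$, or $\nu$ is used beyond measurability.
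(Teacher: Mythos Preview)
Your proof is correct and follows essentially the same approach as the paper's own proof: both unwind $R^{n}$ iteratively, use the membership $S^{n}(x)\in E'_{\mathfrak{i}}$ together with the invertibility of $S$ to deduce $S^{j-1}(x)\in E'_{i_{j}}$ for each $j$, and thereby identify the first coordinate of $R^{n}(\pi(t),x)$ with $T_{\mathfrak{i}}(\pi(t))$. Your treatment is in fact slightly more explicit than the paper's, which handles one inclusion and then says ``The reverse inclusion is proved in a similar way,'' whereas you correctly observe that once the identification $k_{j}=i_{j}$ is made under the hypothesis $S^{n}(x)\in E$, both inclusions follow simultaneously.
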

\begin{proof}
         For a $(\pi(t),x)\in\mathbb{T}^d\times Y$, if we have $R^{n}(\pi(t),x)\in F\times E$, then
         $S^{n}(x)\in E'_{\mathfrak{i}}$, and
         \begin{align*}
               x\in E'_{i_{1}},\quad S(x)\in E'_{i_{2}},\quad \ldots,\quad S^{n-1}(x)\in E'_{i_{n}},
         \end{align*}
          where
         $\mathfrak{i}=(i_{1},i_{2},\ldots, i_{n}).$
         This implies that 
         \begin{align*}
           R(\pi(t),x)=(\pi(\phi_{i_{1}}(t)),S(x)), \quad
           R^{2}(\pi(t),x)=(\pi(\phi_{i_{2}}\circ\phi_{i_{1}}(t)),S^{2}(x)),
         \ldots,
         \end{align*}
        and
         \begin{align*}
                 R^{n}(\pi(t),x)&=(\pi(\phi_{i_{n}}\circ \phi_{i_{n-1}}\circ\cdots \circ\phi_{i_{1}}(t)),S^{n}(x))\\
                 &=(T_{\mathfrak{i}}(\pi(t)), S^{n}(x)).
         \end{align*}
         This proves that $R^{-n}(F\times E)\subset(T_{\mathfrak{i}}\times S^{n})^{-1}(F\times E)$.
         The reverse inclusion is proved in a similar way.
\end{proof}

\begin{lem}
       The transformation\/ $R$ is measure preserving with respect to the product measure\/
       $\mu_{\mathrm{L}}\times\nu$.
\end{lem}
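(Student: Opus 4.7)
The plan is to check measure preservation on a generating $\pi$-system of product sets $F\times E$, with $F\subset\mathbb{T}^d$ Borel and $E\in\mathcal{C}$, and then extend to all Borel sets in $\mathbb{T}^d\times Y$ by the standard monotone class / Dynkin argument together with the fact that both measures involved are finite.

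Given such $F\times E$, I would first decompose $E$ according to the partition of $Y$: write $E=\bigsqcup_{i=1}^{m_a}(E\cap E_i')$, which is a disjoint union in $\mathcal{C}$. For each $i$, the set $E\cap E_i'$ lies inside $E_i'$, so I can apply Lemma \ref{lem2_for_ergodicity} with $n=1$ and $\mathfrak{i}=(i)$ to obtain
\begin{align*}
R^{-1}\bigl(F\times (E\cap E_i')\bigr)=T_i^{-1}(F)\times S^{-1}(E\cap E_i'),
\end{align*}
where here I am writing $T_i$ for $T_{(i)}$, the map on $\mathbb{T}^d$ induced by $\phi_i$. Taking the product measure and summing over $i$ gives
\begin{align*}
(\mu_{\mathrm{L}}\times\nu)\bigl(R^{-1}(F\times E)\bigr)
=\sum_{i=1}^{m_a}\mu_{\mathrm{L}}(T_i^{-1}(F))\,\nu\bigl(S^{-1}(E\cap E_i')\bigr).
\end{align*}

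Now I would invoke two facts. First, $S=S_1$ is measure preserving by hypothesis, so $\nu(S^{-1}(E\cap E_i'))=\nu(E\cap E_i')$. Second, each $T_i$ is the quotient of a diagonal linear map on $\R^d$ whose diagonal entries are integers $n_1^{(i)},\ldots,n_d^{(i)}>1$; such a map acts on $\mathbb{T}^d$ as an expanding endomorphism, and it preserves the Lebesgue measure because each point in $\mathbb{T}^d$ has exactly $\det\phi_i=n_1^{(i)}\cdots n_d^{(i)}$ preimages and $T_i$ locally expands $\mu_{\mathrm{L}}$ by the same factor (equivalently, the cells of $\mathscr{I}_{(n_1^{(i)},\ldots,n_d^{(i)})}$ used in the preceding lemmas each map bijectively and affinely onto $[0,1)^d$ with Jacobian $\det\phi_i$). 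Hence $\mu_{\mathrm{L}}(T_i^{-1}(F))=\mu_{\mathrm{L}}(F)$.

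Substituting these two identities yields
\begin{align*}
(\mu_{\mathrm{L}}\times\nu)\bigl(R^{-1}(F\times E)\bigr)
=\mu_{\mathrm{L}}(F)\sum_{i=1}^{m_a}\nu(E\cap E_i')
=\mu_{\mathrm{L}}(F)\,\nu(E)
=(\mu_{\mathrm{L}}\times\nu)(F\times E),
\end{align*}
which completes the check on the generating system. There is essentially no obstacle here: the only conceptual point is recognizing that $T_i$ is Lebesgue-preserving on $\mathbb{T}^d$, and the rest is bookkeeping. The nontrivial half of Lemma \ref{lem_R_ergodic}, namely the ergodicity of $R$, will require the subsequent lemmas (in particular the mixing-type estimate from Lemma \ref{lem_T_n_convergent} combined with the ergodicity of $S_1$) and is handled separately.
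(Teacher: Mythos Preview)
Your proof is correct and follows essentially the same route as the paper: decompose $E$ along the partition $\{E_i'\}$, apply Lemma~\ref{lem2_for_ergodicity} with $n=1$ to reduce $R^{-1}$ on each piece to $(T_i\times S)^{-1}$, and then use that $T_i$ preserves $\mu_{\mathrm{L}}$ and $S$ preserves $\nu$. The paper's version is simply more terse, leaving the Lebesgue-invariance of $T_i$ and the extension from product sets implicit, while you spell both out.
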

\begin{proof}
         For $E\in\mathcal{C}$ and a Borel $F\subset \mathbb{T}^d$, 
         by Lemma \ref{lem2_for_ergodicity} we have
         \begin{align*}
             \mu_{\mathrm{L}}\times\nu(R^{-1}(F\times E))&
              =\sum_{i=1}^{m_{a}}\mu_{\mathrm{L}}\times\nu\bigl(R^{-1}(F\times (E\cap E'_{i}))\bigr)\\
              &=\sum_{i=1}^{m_{a}}\mu_{\mathrm{L}}\times\nu\bigl((T_{i}\times S)^{-1}(F\times (E\cap E'_{i}))\bigr)\\
              &=\mu_{\mathrm{L}}\times\nu(F\times E).\qedhere
         \end{align*}
\end{proof}

\begin{lem}
      $R$ is ergodic with respect to\/ $\mu_{\mathrm{L}}\times\nu$.
\end{lem}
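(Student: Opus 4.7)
The plan is to prove ergodicity of $R$ via Fourier analysis on the $\mathbb{T}^d$ factor. Given any $R$-invariant $f\in L^2(\mathbb{T}^d\times Y,\mu_{\mathrm{L}}\times\nu)$, I would expand it fiberwise over $Y$ as
$$f(z,x)=\sum_{k\in\mathbb{Z}^d} c_k(x)\, z^k,\qquad c_k\in L^2(Y,\nu),$$
with $z^k=z_1^{k_1}\cdots z_d^{k_d}$. The goal is to show that $c_k\equiv 0$ for all $k\neq 0$ and that $c_0$ is $\nu$-a.e.\ constant, so that $f$ is $(\mu_{\mathrm{L}}\times\nu)$-a.e.\ constant, which is equivalent to the ergodicity of $R$.

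First I would exploit the diagonal structure of each $\phi_i$: on $E_i'$, the map $R$ sends $(z,x)$ to $((z_1^{n_1^{(i)}},\ldots,z_d^{n_d^{(i)}}),S(x))$, and a direct calculation yields
$$f(R(z,x))=\sum_{k\in\mathbb{Z}^d} c_k(S(x))\, z^{\phi_i(k)},$$
where $\phi_i(k)=(n_1^{(i)}k_1,\ldots,n_d^{(i)}k_d)$. Matching Fourier coefficients against $f(z,x)$ and using the invariance $f=f\circ R$ in $L^2$, I would obtain, for each $m\in\mathbb{Z}^d$ and $\nu$-a.e.\ $x\in E_i'$, the renormalization relation
$$c_m(x)=\begin{cases} c_{\phi_i^{-1}(m)}(S(x)) & \text{if $n_j^{(i)}\mid m_j$ for every $j$,}\\ 0 & \text{otherwise.}\end{cases}$$
Specialization to $m=0$ gives $c_0=c_0\circ S$, so the ergodicity of $S$ forces $c_0$ to be $\nu$-a.e.\ constant.

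For $m\neq 0$, the plan is to iterate this relation along the $S$-orbit. Define the itinerary $i_j(x)$ for $\nu$-a.e.\ $x$ by $S^{j-1}(x)\in E'_{i_j(x)}$, and set $\Phi_n(x)=\phi_{i_1(x)}\circ\phi_{i_2(x)}\circ\cdots\circ\phi_{i_n(x)}$. Iterating the one-step relation $n$ times yields
$$c_m(x)=\mathbf{1}_{\{m\in\Phi_n(x)\mathbb{Z}^d\}}\; c_{\Phi_n(x)^{-1}(m)}(S^n(x)).$$
For any coordinate $j$ with $m_j\neq 0$, the condition $m\in\Phi_n(x)\mathbb{Z}^d$ forces the $j$-th diagonal entry $n_j^{(i_1(x))}\cdots n_j^{(i_n(x))}\geq 2^n$ of $\Phi_n(x)$ to divide $m_j$, which becomes impossible once $n>\log_2|m_j|$; hence $c_m(x)=0$ for $\nu$-a.e.\ $x$. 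Thus all nonzero Fourier modes of $f$ vanish and $f$ is a.e.\ constant. The one delicate point, rather than a serious obstacle, is the measurable bookkeeping of the iteration, which is handled cleanly by partitioning $Y$ into the finitely many sets $E'_{\mathfrak{i}}$ of the appendix (one for each word $\mathfrak{i}\in\{1,\ldots,m_a\}^n$) and applying the one-step relation on each piece before summing. Notably the argument uses only ergodicity of $S$, not its invertibility; the natural extension constructed earlier is needed elsewhere in the paper but not here.
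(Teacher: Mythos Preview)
Your proof is correct and takes a genuinely different route from the paper. The paper argues via a measure-theoretic mixing estimate on the torus factor: it first proves (Lemma~\ref{lem_T_n_convergent}) that $\mu_{\mathrm{L}}(T_n^{-1}(E)\cap F)\to\mu_{\mathrm{L}}(E)\mu_{\mathrm{L}}(F)$ as the diagonal expansion $n\in\Zpo^d$ grows, then decomposes $R^{-n}(F_1\times G_1)\cap(F_2\times G_2)$ over the cylinder sets $E'_{\mathfrak{i}}$ and combines the torus mixing with the ergodicity of $S$ through Ces\`aro averages. You instead work directly with the fiberwise Fourier expansion of an $R$-invariant $L^2$ function, derive the one-step coefficient recursion, and kill every nonzero mode by the elementary divisibility obstruction coming from $n_j^{(i)}\geqq 2$.

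Your argument is shorter and more self-contained for this particular setting; it also makes transparent, as you observe, that invertibility of $S$ plays no role (the paper's route does use it, since the $E'_{\mathfrak{i}}=S^n(E'_{i_1})\cap\cdots\cap S(E'_{i_n})$ form a partition of $Y$ only because $S_1$ is a bijection). Conversely, the paper's approach is more robust: the mixing lemma for $T_n$ would survive in situations where the torus maps are not integer-diagonal endomorphisms and hence do not act so cleanly on characters, whereas your recursion depends on the exact structure $z\mapsto(z_1^{n_1^{(i)}},\ldots,z_d^{n_d^{(i)}})$ permuting the Fourier basis.
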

\begin{proof}
      For Borel sets $F_{1},F_{2}\subset\mathbb{T}^d$, $G_{1},G_{2}\in\mathcal{C}$ and $n\in\Zpo$,
      we have
      \begin{align*}
              \mu_{\mathrm{L}}\times\nu\bigl(R^{-n}(F_{1}\times G_{1})\cap (F_{2}\times G_{2})\bigr)
              &=\sum_{\mathfrak{i}\in\mathfrak{I}(n)}\mu_{\mathrm{L}}\times\nu\bigl(R^{-n}(F_{1}\times (G_{1}\cap E'_{\mathfrak{i}}))
               \cap (F_{2}\times G_{2})\bigr)\\
               &=\sum_{\mathfrak{i}\in\mathfrak{I}(n)}\mu_{\mathrm{L}}\bigl(T_{\mathfrak{i}}^{-1}(F_{1})\cap F_{2}\bigr)\,
               \nu\bigl(S^{-n}(G_{1}\cap E'_{\mathfrak{i}})\cap G_{2}\bigr),
      \end{align*}
      where we used Lemma \ref{lem2_for_ergodicity} for the second equality.
      
      By Lemma \ref{lem_T_n_convergent}, for arbitrary $\e>0$, there is $n_{0}>0$ such that,
      if $n\geqq n_{0}$ and $\mathfrak{i}\in\mathfrak{I}(n)$, the number
      \begin{align*}
           \e_{\mathfrak{i}}=\mu_{\mathrm{L}}(T_{\mathfrak{i}}^{-1}(F_{1})\cap F_{2})-\mu_{\mathrm{L}}(F_{1})\mu_{\mathrm{L}}(F_{2})
      \end{align*}
      has an estimate
      \begin{align*}
              |\e_{\mathfrak{i}}|<\e.
      \end{align*}
      We therefore have that
      \begin{align*}
                          \mu_{\mathrm{L}}\times\nu\bigl(R^{-n}(F_{1}\times G_{1})\cap (F_{2}\times G_{2})\bigr)
                          =\sum_{\mathfrak{i}\in\mathfrak{I}(n)}\mu_{\mathrm{L}}(F_{1})\,\mu_{\mathrm{L}}(F_{2})\,
                          \nu\bigl(S^{-n}(G_{1}\cap E'_{\mathfrak{i}})\cap G_{2}\bigr)+
                          \e_{n},
      \end{align*}
      where
      \begin{align*}
        \e_{n}=\sum_{\mathfrak{i}\in\mathfrak{I}(n)}\e_{\mathfrak{i}}\,
        \nu\bigl(S^{-n}(G_{1}\cap E'_{\mathfrak{i}})\cap G_{2}\bigr).
      \end{align*}
      Since the $E'_{\mathfrak{i}}$ (with $\mathfrak{i}\in\mathfrak{I}(n)$) give a partition of $X$, 
      we firstly see that $|\e_{n}|<\e$, and secondly conclude that
      \begin{align*}
              \sum_{\mathfrak{i}\in\mathfrak{I}(n)}
              \nu\bigl(S^{-n}(G_{1}\cap E'_{\mathfrak{i}})\cap G_{2}\bigr)
              =\nu\bigl(S^{-n}(G_{1})\cap G_{2}\bigr).
      \end{align*}
      
      For each $N>0$, we have
      \begin{align*}
            \frac{1}{N}\sum_{n=0}^{N-1}\mu_{\mathrm{L}}\times\nu\bigl(R^{-n}(F_{1}\times G_{1})\cap (F_{2}\times G_{2})\bigr)
            =\mu_{\mathrm{L}}(F_{1})\,\mu_{\mathrm{L}}(F_{2})\,\frac{1}{N}\sum_{n=0}^{N-1}
            \bigl(\nu(S^{-n}(G_{1})\cap G_{2})+\e_{n}\bigr).
      \end{align*}
     By ergodicity of $S$, as $N\rightarrow\infty$ this converges to
     \begin{align*}
       \mu_{\mathrm{L}}(F_{1})\,\mu_{\mathrm{L}}(F_{2})\,\nu(G_{1})\,\nu(G_{2})
       =\mu_{\mathrm{L}}\times\nu(F_{1}\times G_{1})\,\mu_{\mathrm{L}}\times\nu(F_{2}\times G_{2}),
     \end{align*}
     up to an error term of absolute value less than $\e$.
     Since $\e$ was arbitrary, we see that
     \begin{align*}
       \lim_{N\rightarrow\infty} \frac{1}{N}\sum_{n=0}^{N-1}
       \mu_{\mathrm{L}}\times\nu\bigl(R^{-n}(F_{1}\times G_{1})\cap (F_{2}\times G_{2})\bigr)
       =\mu_{\mathrm{L}}\times\nu(F_{1}\times G_{1})\,\mu_{\mathrm{L}}\times\nu(F_{2}\times G_{2}).
     \end{align*}
     Since $F_{1},F_{2},G_{1}$ and $G_{2}$ are arbitrary, this proves the ergodicity of $R$.
\end{proof}

\section*{Acknowledgment }
The author was supported by EPSRC grant EP/S010335/1.
The author thanks Uwe Grimm for fruitful discussions.
The author also thanks Hitoshi Nakada for a discussion on natural extensions and
Neil Ma\~nibo for valuable comments on the draft.
\textcolor{red}{The author thanks referees for valuable comments.}

\end{document}